\documentclass[11pt,letterpaper]{article}
\usepackage[margin=1in]{geometry}
\usepackage[T1]{fontenc}
\usepackage{lmodern,microtype}
\usepackage{amsmath,amssymb,amsthm,mathtools}
\usepackage{booktabs,array,tabularx,enumitem,needspace}
\usepackage[
  backend=biber,
  style=alphabetic,
  maxbibnames=99
]{biblatex}
\usepackage{xcolor,hyperref}
\usepackage[nameinlink,noabbrev,capitalise]{cleveref}
\hypersetup{colorlinks=true,
linkcolor=cyan!0!blue,
citecolor=green!85!black,
urlcolor=blue!40!black,
pdftitle={Subspace Uncertainty and Sharp Sampling Thresholds on the Boolean Cube},pdfauthor={}}
\allowdisplaybreaks[2]
\usepackage[most]{tcolorbox}

\newtcolorbox{questionbox}{
  colback=gray!28,
  colframe=gray!28,
  boxrule=0pt,
  arc=0mm,
  left=8pt,
  right=8pt,
  top=6pt,
  bottom=6pt,
  breakable
}

\newcommand{\Mhard}{M_{d,k}}
\newtheorem{theorem}{Theorem}[section]
\newtheorem{lemma}[theorem]{Lemma}
\newtheorem{proposition}[theorem]{Proposition}
\newtheorem{corollary}[theorem]{Corollary}
\newtheorem{definition}[theorem]{Definition}
\newtheorem{remark}[theorem]{Remark}
\newcommand{\E}{\mathbb E}
\newcommand{\Pp}{\mathbb P}
\newcommand{\R}{\mathbb R}

\newcommand{\1}{\mathbf1}
\newcommand{\cP}{\mathcal P}

\newcommand{\cH}{\mathcal H}
\newcommand{\HH}{\mathsf H_2}
\newcommand{\hSigma}{\widehat\Sigma}
\newcommand{\Nstab}{N_{\mathrm{stab}}}
\newcommand{\Npar}{N_{\mathrm{par}}}

\newcommand{\ip}[2]{\langle#1,#2\rangle}
\newcommand{\Tr}{\operatorname{tr}}

\newcommand{\eps}{\varepsilon}
\DeclareMathOperator{\artanh}{arctanh}
\renewenvironment{abstract}{\begin{center}\bfseries Abstract\end{center}\normalfont\normalsize}{\par}
\title{Subspace Uncertainty and Sharp Sampling Thresholds\\on the Boolean Cube}
\author{Thomas Weinberger\\[0.4em]{\normalsize School of Computer and Communication Sciences, EPFL, Switzerland}\\{\normalsize\texttt{thomas.weinberger@epfl.ch}}}
\date{\today}
\begin{document}
\hypersetup{pageanchor=false}
\pagenumbering{gobble}
\maketitle
\vspace{-2em}
\begin{abstract}

For a known finite-dimensional model subspace, classical Gaussian regression theory guarantees that least squares eventually attains the parametric rate, with prediction error scaling as the subspace dimension divided by the sample size. Yet subspaces with the same number of unknown coefficients can require very different sample sizes to reach this regime. For subspaces of polynomials on the Boolean cube, a fundamental question is how restrictions on degree and ambient dimension jointly determine this threshold. Our main result sharply characterizes the worst-subspace minimax sample threshold over a substantial parameter range, revealing an exponential dependence on degree and precisely quantifying the joint effects of degree, ambient dimension, and subspace dimension.

A small modification of our lower-bound construction reveals a strong separation: a single subspace can admit noiseless recovery with optimal sample complexity independent of degree, yet require essentially the worst-case sample size, exponential in degree, to attain the noisy parametric rate.

To establish the sampling lower bound, we sharpen the Polyanskiy–Samorodnitsky uncertainty principle (J. Funct. Anal., ’19) to uniformly optimal precision and extend its concentration guarantee to large subspaces in which every function concentrates most of its energy on one shared Hamming ball. The sampling upper bound follows from the sharp moment bound of Kirshner and Samorodnitsky (IEEE Trans. Inf. Theory, ’21) combined with empirical-process bounds.

\end{abstract}
\par\medskip
\noindent\rule{\linewidth}{0.4pt}\par
{\small
\noindent\textbf{Keywords:} minimax linear regression,
uncertainty principle, sample complexity, Boolean cube,
low-degree polynomials, hypercontractivity.\par
}
\clearpage
\tableofcontents
\clearpage
\pagenumbering{arabic}
\setcounter{page}{1}
\hypersetup{pageanchor=true}
\addtocontents{toc}{\protect\setcounter{tocdepth}{2}}
\section{Introduction}\label{sec:intro}
A fundamental promise of regression theory is that prediction error eventually scales like the number of unknown coefficients divided by the number of observations \cite{HKZ12,Mou22,EHE24}, a scaling known as the \emph{parametric rate}. How long must one wait before this rate becomes valid? We study this question for low-degree functions on the Boolean cube in a known linear subspace, which we call the \emph{model} throughout this work. For a known model and random inputs, the answer depends on how well the observations reveal the model's population geometry. Low dimensionality alone does not ensure sample-efficient estimation since much of the predictive variation may occur on rarely sampled inputs. This issue connects statistical estimation to the stability of random least squares \cite{CDL13,Mou22,EHE24}.

Besides the subspace dimension, which counts unknown coefficients, two parameters constrain the model: the ambient dimension counts input coordinates, and the degree bounds the order of interactions.
Stricter bounds on these parameters can rule out models that are difficult to estimate. In this work, we seek a quantitative theory of how these restrictions, together with the model dimension, determine the sample size needed for accurate prediction.

\begin{questionbox}
\textbf{Statistical question.} Among known subspaces with a prescribed ambient dimension, degree, and model dimension, how many random examples are needed before the usual parametric prediction rate is guaranteed?
\end{questionbox}

A central distinction in regression is between uniquely determining the ground-truth function from noiseless observations and estimating it accurately from noisy observations. Without noise, the ground truth is uniquely determined precisely when no nonzero function in the model vanishes at every sampled input. With noise, distinct targets must produce responses that differ enough relative to the noise level to be reliably distinguished. We therefore focus on differences between possible ground-truth functions, denoted by $h$ throughout this work. The pointwise energy is $h(x)^2$, and its distribution across inputs determines how informative random observations are about these differences. The total energy $\mathbb E[h(X)^2]$ is the squared population $L_2$ distance between the two functions. Under observed inputs $X_1,\ldots,X_n$, however, their empirical separation is $\sum_{i=1}^n h(X_i)^2$. If this quantity is small relative to the noise variance, the data cannot reliably distinguish the targets.
Concentration of the energy $h^2$ on a rarely sampled set can create precisely this discrepancy. Exact Gaussian minimax results formalize this connection through the spectrum of the empirical covariance matrix \cite{Mou22,EHE24}.

To relate these statistical difficulties to the degree, we use an uncertainty principle describing how much energy a low-degree function can concentrate on a small set of inputs. Polyanskiy and Samorodnitsky identify a sharp asymptotic boundary for this tradeoff \cite{PS19}. 
However, for noise variance $\sigma^2$, the parametric rate allows error of order $\sigma^2/n$ per direction and hence $\sigma^2m/n$ in total. Our lower bound therefore requires finer quantitative control and simultaneous concentration in many directions on the same set, leading to the following geometric question.
\begin{questionbox}
\textbf{Geometric question.} How small can the probability of one set of Boolean inputs be while that set still carries most of the energy of every function in a large low-degree subspace?
\end{questionbox}
This question fits into a broader tradition of using uncertainty principles to understand the limits of reconstruction.
Spatial and Fourier concentration have been connected to recovery from incomplete observations \cite{DS89}.
The subspace perspective also has classical precedents in the study of the effective dimension of band-limited signals concentrated on a common time interval \cite{LP62}.
Our result develops this perspective for low-degree Boolean functions, sharpening the concentration bounds around the asymptotic boundary identified in \cite{PS19} and constructing large subspaces whose functions spatially concentrate on one shared set.
This extension is central to our regression lower bound and suggests broader uses wherever reconstruction accuracy depends on how many directions concentrate simultaneously on a common small region of the input space.

\medskip
\par\noindent\textbf{An illustrative example.}
The following simple one-dimensional model illustrates how degree, ambient dimension, and noise can affect estimation. Let \(d\) be odd for simplicity, let \(1\le k\le d\), let \(X\) be uniform on \(\{-1,1\}^d\), and define
\[
Z_d(x)=\frac{x_1+\cdots+x_d}{\sqrt d},
\qquad
h_{d,k}(x)=
\frac{Z_d(x)^k}{\sqrt{\mathbb E[Z_d(X)^{2k}]}},
\qquad
V_{d,k}=\operatorname{span}\{h_{d,k}\}.
\]
The basis function $h_{d,k}$ has degree at most $k$ and unit energy. 

Positive and negative coordinates typically approximately cancel, but taking higher powers emphasizes inputs with a large coordinate imbalance. To see where the energy is located, fix \(k\) and let \(d\) grow. Then, $Z_d$ approaches a standard Gaussian \(G\), and the limiting energy distribution has density proportional to
\(
z^{2k}e^{-z^2/2}.
\) Its maxima occur at \(z=\pm\sqrt{2k}\), far from the typical order-one values of \(G\). Thus, for any fixed \(\rho>0\), sufficiently large \(k\), and then sufficiently large odd \(d\), the set
\(
E=\{x:|Z_d(x)|\ge\sqrt k\}
\) consisting of two Hamming balls satisfies
\[
\mathbb E[h_{d,k}(X)^2\mathbf1_{E^c}]\le\rho,
\qquad
p:=\mathbb P(X\in E)\le2e^{-k/2}.
\]
The two parameters play complementary roles: \(d\) determines how closely the energy distribution follows the Gaussian approximation, while the degree \(k\) determines how far its peaks lie from typical inputs and how rarely those regions are sampled. This description requires \(d\) large enough both to accommodate the energy near \(\pm\sqrt{2k}\) within \([-\sqrt d,\sqrt d]\) and to make the Gaussian approximation accurate there.

Now consider observations $\{(X_i,Y_i)\}_{i=1}^n$ where
\[
Y_i=\theta h_{d,k}(X_i)+\varepsilon_i,
\qquad
\varepsilon_i\overset{\mathrm{iid}}{\sim}N(0,\sigma^2),
\]with noise independent of the inputs. Without noise, one observation determines \(\theta\) exactly as $Y_i/h_{d,k}(X_i)$. Assume \(1\le n\le e^{k/2}/8\). Applying the classical Le Cam two-point method shows (see Appendix~\ref{app:toy-example}) that
\[
\inf_{\widehat\theta}\sup_{\theta\in\mathbb R}
\mathbb P\!\left\{|\widehat\theta-\theta|^2\ge
\frac{\sigma^2}{32n\rho}\right\}
\ge \frac9{32}.
\]
Thus, if \(\rho<1/(32A)\), the parametric error rate \(A\sigma^2/n\) at confidence \(3/4\) cannot be guaranteed throughout this exponential sample range.

While our eventual lower-bound construction is more intricate than the above toy example, it shares some of the same mechanisms. 
Both use radial polynomials, whose values depend only on the coordinate sum, and concentrate energy on sets of small measure formed by Hamming balls. In both, the relationship between $k$ and $d$ matters. In the toy example, we require sufficiently large $d$ relative to $k$ to leave room for the peaks at $\pm\sqrt{2k}$ and the energy around them. In our later construction, the concentration level approaches the largest possible coordinate sum as $k/d$ approaches $1/2$, so we require this ratio to stay bounded away from $1/2$ to obtain sufficiently strong, uniform concentration guarantees.
A key additional ingredient for our lower-bound construction is to combine many orthogonal directions concentrating on the same small set. Then, the respective estimation errors can accumulate and produce the required linear dependence on the model dimension.

\subsection{Preliminaries}\label{sec:model}
Throughout, let $\mu_d$ be the uniform measure on $\{-1,1\}^d$, $\E[f]=\int f(x)\,d\mu_d(x)$ and $\E_n[f]=n^{-1}\sum_i f(X_i)$. We write $\|f\|_2^2=\E[f^2]$ and $\langle f,g\rangle=\E[fg]$ for functions on the hypercube. 

The Walsh characters $\chi_S(x)=\prod_{i\in S}x_i$ with $S\subseteq[d]$ and $|S|\le k$ form an orthonormal basis for the space $\cP_{d,k}$ of degree-at-most-$k$ functions:
\[
 \cP_{d,k}=\operatorname{span}\{\chi_S:|S|\le k\},
 \qquad D_{d,k}=\dim\cP_{d,k}=\sum_{j=0}^{\min(k,d)}\binom dj.
\]

\smallskip
\noindent\textbf{Statistical setup.} 
The learner receives an $m$-dimensional subspace $V\subseteq\cP_{d,k}$ containing an unknown target function $f_\theta$, together with an evaluable orthonormal basis $\Phi_V=(\phi_1,\ldots,\phi_m)^\top$ (we may drop the subscript when $V$ is clear).
The choice of the orthonormal basis is arbitrary and does not affect the estimation problem. There are no computational restrictions.

The random experiment we consider is
\begin{equation}\label{eq:model}
 X_i\stackrel{\mathrm{iid}}{\sim}\mu_d,\qquad
 Y_i=f_\theta(X_i)+\xi_i,\qquad
 f_\theta=\theta^\top\Phi_V,\qquad
 \xi_i\stackrel{\mathrm{iid}}{\sim}N(0,\sigma^2),
\end{equation}
with independent inputs and noise, $\sigma>0$, and unknown $\theta\in\R^m$. Both $V$ and $\theta$ are fixed before sampling.
Given the observations $\{(X_i,Y_i)\}_{i=1}^n$, the learner outputs $\widehat\theta\in\mathbb R^m$, incurring population prediction loss $L(\theta,\widehat\theta)=\E[(f_{\smash{\widehat{\theta}}}-f_\theta)^2]$. By orthonormality, $L(\theta,\widehat\theta)=\|\widehat\theta-\theta\|_2^2$.

The empirical covariance and its population counterpart are
\[
 \hSigma_V=\frac1n\sum_i\Phi_V(X_i)\Phi_V(X_i)^\top,
 \qquad \Sigma_V=\E[\Phi_V(X)\Phi_V(X)^\top]=I_m.
\]

\medskip
\noindent\textbf{Key definitions.} For a nonnegative, possibly infinite random variable $Z$, define
$Q_{1-\delta}(Z)=\inf\{r\ge0:\Pp(Z\le r)\ge1-\delta\}$, with
$\inf\varnothing=+\infty$. We write $Q_{1-\delta,\theta}$
when the probability is taken under the data distribution
with coefficient vector $\theta$. Throughout, we sometimes write $t=\log(1/\delta)$ for the confidence parameter. The supplied-model minimax risk is
\[
 \mathcal R^*_{n,\delta}(V)
 =\inf_{\widehat\theta_V}\sup_{\theta\in\R^m}
 Q_{1-\delta,\theta}\bigl(\|\widehat\theta_V-\theta\|_2^2\bigr).
\]
Probabilities include inputs, noise, and any estimator randomness.

Our main object of interest is the minimax prediction risk for the worst admissible subspace.

\begin{definition}[Worst-subspace minimax risk]
Define
\[
 \mathfrak{R}_{n,\delta}(d,k,m)
 :=\sup_{\substack{V\subseteq\cP_{d,k}\\\dim V=m}}
   \mathcal R^*_{n,\delta}(V)
 =\sup_{\substack{V\subseteq\cP_{d,k}\\\dim V=m}}
   \inf_{\widehat\theta_V}\sup_{\theta\in\R^m}
   Q_{1-\delta,\theta}
   \bigl(\|\widehat\theta_V-\theta\|_2^2\bigr).
\]
\end{definition}

\begin{remark}[Why the subspace is known]
Giving $V$ to the learner separates coefficient estimation from discovering the relevant low-degree directions. 
Knowing $V$ fixes the number of unknown coefficients at $m$, allowing us to isolate how a degree bound $k$ restricts the complexity of the known model and the resulting difficulty of estimation.
Estimation over all of $\mathcal P_{d,k}$ would not isolate the effect of degree at a fixed model dimension, because changing $k$ also changes the number of unknown coefficients.
\end{remark}

We ask how many samples are needed before this risk reaches the parametric rate. The following threshold is the central quantity we determine.
\begin{definition}[Parametric threshold]
Let $\sigma^2>0$ denote the noise variance in \eqref{eq:model}. For a fixed benchmark constant $A\ge16$,
let $\Npar(d,k,m,A,\delta)$ be the least integer $N\ge1$ such that
\[
 \mathfrak{R}_{n,\delta}(d,k,m)
 \le \frac{A\sigma^2\big(m+\log(1/\delta)\big)}{n}
 \qquad\text{for every }n\ge N.
\]
\end{definition}

We take $A\ge16$ to simplify the proof constants; the value $16$ has no special significance. Our results have the same leading exponent for every fixed $A\ge16$.

We study stable sampling, which uniformly preserves a fixed fraction of population squared norms and, at a sufficiently strong level, guarantees the parametric regression rate.
\begin{definition}[Stable-sampling threshold]\label[definition]{def:stab}
For fixed $c\in(0,1)$, let $\Nstab(d,k,m,c,\delta)$ be the least integer
$N\ge1$ such that, for every $n\ge N$ and every fixed eligible $V$,
\[
 \Pp\{\forall h\in V\colon\,\E_n[h^2]\ge c\E[h^2]\}\ge1-\delta.
\]
Equivalently, $\Pp\{\hSigma_V\succeq cI_m\}\ge1-\delta$.
\end{definition}

\par\medskip
\noindent\textbf{Entropy and the uncertainty curve.} Throughout, all logarithms are natural. We define the binary entropy and the uncertainty curve by
\[
 \HH(u)=-u\log u-(1-u)\log(1-u),\qquad
 \Psi(q)=\log2-\HH\!\left(\tfrac12-\sqrt{q(1-q)}\right),
\]
with $0\log0=0$, $u\in[0,1]$, and $q\in[0,1/2]$. We define
\[
 \Lambda_{d,k}=d\Psi(k/d),\qquad
 \Mhard=\binom d{\lfloor k^{1/3}\rfloor}.
\]
\noindent\textbf{Further notation.}
For real numbers, $a\vee b=\max\{a,b\}$, $a\wedge b=\min\{a,b\}$, and $(a)_+=\max\{a,0\}$. For symmetric matrices $A$ and $B$, $A\succeq B$ means $a^\top(A-B)a\ge0$ for every $a$ and the Rayleigh quotient is $\mathcal R_B(a)=a^\top Ba/\|a\|_2^2$ for $a\ne0$. We write \(f\lesssim g\) for a pointwise inequality up to a positive constant, and $f\asymp g$ for both inequalities. Finally, $\chi_r^2=\sum_{j=1}^rG_j^2$ for independent $G_j\sim N(0,1)$ has the chi-squared distribution with $r$ degrees of freedom, and $Z\succeq_{\rm st}Y$ means $\Pp(Z>u)\ge\Pp(Y>u)$ for every real $u$.

\subsection{Contributions}\label{sec:results}

\subsubsection{Sharp sample thresholds for regression}
Our first result answers the statistical question posed above by determining when the worst-subspace minimax risk reaches the parametric rate, with the dependence on degree and ambient dimension captured by the exponent $\Lambda_{d,k}$.
\begin{questionbox}
\begin{theorem}[Minimax sample threshold]\label{thm:main}
Fix $q_0\in(0,1/2)$ and $A\ge16$. For $1\le k\le q_0d$, $1\le m\le D_{d,k}$ and $0<\delta\le1/4$, put $t=\log(1/\delta)$.
If $m\le\Mhard \vee t$, then
\begin{equation*}
 \left|\log\frac{\Npar(d,k,m,A,\delta)}{m+t}-\Lambda_{d,k}\right|
 \le C_{q_0,A}k^{1/3}.
\end{equation*}
The upper bound holds for every feasible $m$ and is attained by ordinary least squares.
\end{theorem}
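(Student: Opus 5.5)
The proof splits into an upper bound on $\Npar$ (valid for every $m$) and a matching lower bound (under $m\le\Mhard\vee t$). Both are carried through the stable-sampling threshold $\Nstab$ and the uncertainty exponent $\Lambda_{d,k}=d\Psi(k/d)$.

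\emph{Upper bound.} I would show that once $n\ge N_0:=(m+t)e^{\Lambda_{d,k}+Ck^{1/3}}$, ordinary least squares achieves risk $A\sigma^2(m+t)/n$ with probability at least $1-\delta$, uniformly over all $V\subseteq\cP_{d,k}$ of dimension $m$. Conditional on the inputs, if $\hSigma_V\succeq cI_m$ then $\|\widehat\theta-\theta\|_2^2=\sigma^2 n^{-1}G^\top\hSigma_V^{-1}G$ with $G\sim N(0,I_m)$, so it is at most $\sigma^2\chi_m^2/(cn)$. Laurent--Massart gives $\chi_m^2\le m+2\sqrt{mt}+2t\le 2(m+t)+\dots$ with probability $1-\delta/2$. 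Taking $c=1/2$ and $A\ge16$ absorbs the constants. It therefore suffices to bound $\Nstab(d,k,m,1/2,\delta/2)\le N_0$.

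For this I would use a matrix-Chernoff / empirical-process lower-tail bound. The lower tail of $\hSigma_V$ is controlled by the variance proxy $\sup_{\|a\|=1}\E[(a^\top\Phi)^4]$, or by the $L_\infty$ envelope $\sup_x\|\Phi(x)\|^2$. The Kirshner--Samorodnitsky sharp moment bound controls $\E[h^{q}]/\E[h^2]^{q/2}$ for low-degree $h$ up to $e^{\Lambda_{d,k}(1+o(1))}$-type factors at the optimal exponent. Combining it with a small-ball/Paley--Zygmund lower-tail argument (Oliveira/Koltchinskii--Mendelson style, which needs only second and fourth moments of $a^\top\Phi$) and a net over the sphere of $V$ gives the required sample size $\lesssim (m+t)\cdot e^{\Lambda_{d,k}+Ck^{1/3}}$. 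The $k^{1/3}$ slack absorbs the subexponential losses of the moment bound, which occur at moment order around $k^{1/3}$.

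\emph{Lower bound.} I would use the sharpened uncertainty principle stated in the introduction. There is a Hamming-ball set $E$ with $\mu_d(E)=p\le e^{-\Lambda_{d,k}+Ck^{1/3}}$, and a subspace $W\subseteq\cP_{d,k}$ of dimension $\Mhard=\binom d{\lfloor k^{1/3}\rfloor}$, such that every $h\in W$ has $\E[h^2\1_{E^c}]\le\rho\E[h^2]$. I would build $W$ from radial polynomials concentrating near the critical layer, multiplied by characters $\chi_S$ with $|S|\le k^{1/3}$, so that products remain of degree $\le k$ and stay concentrated.

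Take $V=W$ (or a subspace of it of dimension $m$). If $m\le t$ instead, use a one-dimensional variant together with confidence amplification. With $n\le c(m+t)/p$ samples, with probability $\ge 2\delta$ the number of samples in $E$ is at most a small multiple of $m+t$. The empirical Fisher information then has $m$ directions with eigenvalue $\lesssim n\rho+|\{i:X_i\in E\}|\cdot(\text{local size})$. A Gaussian Bayesian argument (Fano, or the exact Gaussian minimax via the trace of $(n\hSigma)^{-1}$ on these directions) yields error $\gtrsim\sigma^2 m/(\text{small})\gg A\sigma^2(m+t)/n$. The confidence-level $t$ contribution comes from the event that no sample hits $E$, which has probability $(1-p)^n\ge\delta$ when $n\lesssim t/p$. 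Together these give $\Npar\ge (m+t)e^{\Lambda_{d,k}-Ck^{1/3}}$.

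\emph{Main obstacle.} The hard step is the lower-bound geometry. One needs $\Mhard$ orthogonal degree-$k$ directions whose energy concentrates jointly, with uniformly small leakage $\rho$, on a single set whose measure matches $e^{-\Lambda_{d,k}}$ to within $e^{O(k^{1/3})}$. I would first try Krawtchouk polynomials times low-degree characters, estimating their leakage via saddle-point asymptotics of the weight $\binom{d}{w}K_{k}(w)^2$ near $w\approx d(\frac12-\sqrt{q(1-q)})$. The restriction $k\le q_0d$ keeps this point uniformly away from the boundary.
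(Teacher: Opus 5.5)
Your overall architecture matches the paper: OLS plus stable sampling for the upper bound, and a common concentration set, small empirical eigenvalues and the exact Gaussian quantile for the lower bound. But both halves have genuine gaps at the sharpness the theorem requires.

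\emph{Upper bound.} Fourth-moment lower-tail bounds cannot produce the exponent $\Lambda_{d,k}$.
\begin{itemize}
\item Oliveira-type bounds reach $\hSigma_V\succeq\frac12 I_m$ only for $n\gtrsim\sup_h\E[h^4]\,(m+t)$ over unit $h\in V$. Paley--Zygmund small-ball bounds are weaker still: they give only an eigenvalue of order $1/\sup_h\E[h^4]$.
\item For each large $k$ and $d$ large enough, the radial polynomial $e_k$ is close to the Hermite polynomial $H_k$, and $\E[H_k(G)^4]=\sum_j\binom kj^2\binom{2j}j=9^k/\mathrm{poly}(k)$. The true threshold is at most $(m+t)e^{2k+O(k^{1/3})}$, since $\Lambda_{d,k}\le2k$. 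The gap $e^{(\log 9-2)k}$ cannot be absorbed.
\item The $L_\infty$ envelope is worse still, because $e_k(\mathbf 1)^2=\binom dk$.
\item Kirshner--Samorodnitsky does not give ``$e^{\Lambda_{d,k}}$-type'' factors at general order. It gives $e^{dF(p,k/d)}$, and $\Lambda_{d,k}$ governs this only to first order in $s$ at $p=2+2s$.
\end{itemize}
The missing idea is to work at orders $p=2+2s$ with $s\asymp k^{-1/3}$ (not order $k^{1/3}$), and to prove the rate has no quadratic term: $F(2+2s,q)\le s\Psi(q)+C_0qs^3$. With an $s^2$ remainder, the same balancing would only give an $O(k^{1/2})$ error. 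This yields a clipping level $B=e^{\Lambda_{d,k}+O(k^{1/3})}$ with $\E[h^2\wedge B]\ge\eta$ for every unit $h\in\cP_{d,k}$. Symmetrization, contraction (via $\E\|n^{-1}\sum_i\varepsilon_i\Phi_V(X_i)\|_2\le\sqrt{m/n}$) and Bousquet's inequality with variance proxy $B$ then give $\hSigma_V\succeq\frac12 I_m$ once $n\ge CB(m+t)$. This has no logarithmic factor in $m$, which matters: such a factor could not be absorbed, since $\log m$ may be of order $k\log d$ with $d$ unbounded.

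\emph{Lower bound.} Citing the subspace uncertainty principle is what the paper does, but the construction you sketch for it fails.
\begin{itemize}
\item A single Krawtchouk polynomial does not concentrate near the critical layer. Its energy law in $S_d$ is spread arcsine-like over $|S_d|\le2\sqrt{k(d-k)}$; in the Gaussian limit, $H_k^2\,d\gamma$ is asymptotically arcsine on $[-2\sqrt k,2\sqrt k]$. Retaining a fraction $1-\rho$ of its energy therefore costs a constant factor in the exponent, roughly $2k\sin^2(\pi\rho/2)$ instead of $2k$ when $k\ll d$.
\item The paper instead tapers coefficients smoothly over $\lfloor k^{1/3}\rfloor$ consecutive Krawtchouk degrees. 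This gives an approximate top eigenvector of the Jacobi matrix, $\|(S_d-\tau)p\|_2\lesssim\sqrt d\,k^{-1/6}$ with $\tau=2\sqrt{k(d-k)}$, and it is the source of the $k^{1/3}$ remainder.
\item Multiplying by characters leaves the cross terms $\E[\chi_S\chi_{S'}p^2\1_{E^c}]$ uncontrolled. For instance, $\operatorname{span}\{\chi_Sp(S_d):|S|\le2\}$ contains $(S_d-\tau)^2p(S_d)$, whose energy law $(s-\tau)^4p(s)^2$ is weighted away from $\tau$ and is not controlled by any residual bound.
\item The paper uses harmonic spaces $\cH_{d,r}$ times separate radial factors of degree $k-r$. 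The Filmus--Mossel radial-weight identity gives every unit function in $\cH_{d,r}$ the law of $S_{d-2r}$, and keeps different sectors orthogonal even on $E^c$.
\end{itemize}
Finally, your eigenvalue step is misstated. A sample in $E$ contributes on average at least $(1-\rho)m/p$ to $\Tr(n\hSigma_V)$, so not all $m$ directions are small. What works is $\operatorname{rank}\hSigma_E\le N_E\le m/4$ together with Markov's inequality for $\Tr\hSigma_{E^c}\le32m\rho$; this gives $\lfloor m/2\rfloor$ eigenvalues at most $128\rho$ on $\ker\hSigma_E$. For $t\ge m$, it is not enough that $(1-p)^n\ge e^{-t}$: the no-hit probability times a Gaussian tail $\Pp(G^2\ge ct)$ must still exceed $e^{-t}$.
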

\end{questionbox}

\begin{remark}[Polynomial Range]
In particular, for every fixed $B>0$, the above two-sided sample bound holds for every feasible $m\le d^B$, with constants depending additionally on $B$: for sufficiently large $k$ depending on $B$, $\Mhard\ge d^B$, while for bounded $k$, the estimate follows from $\Npar\gtrsim m+t$ and $\Lambda_{d,k}\le2k$.
\end{remark}

Taking the worst case over all ambient dimensions gives a particularly simple threshold, valid for every model dimension \(m\).
\begin{corollary}[Unrestricted ambient dimension]
\label[corollary]{cor:unrestricted-d}
For integers $k,m\ge1$, fixed $A\ge16$, and $0<\delta\le1/4$, put $t=\log(1/\delta)$ and define
\[
 \Npar^\infty(k,m,A,\delta)
 :=\sup_{\substack{d\ge1\\m\le D_{d,k}}}
       \Npar(d,k,m,A,\delta).
\]
Then
\[
 \left|\log\frac{\Npar^\infty(k,m,A,\delta)}{m+t}-2k\right|
 \le C_A k^{1/3}.
\]
\end{corollary}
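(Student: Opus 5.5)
We deduce \cref{cor:unrestricted-d} from \cref{thm:main}. The additional ingredients are elementary properties of $\Psi$ and monotonicity in $d$.

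\textbf{Properties of $\Psi$.} $\Psi$ is increasing on $[0,1/2]$ and $\Psi(q)\le 2q$, which gives $\Lambda_{d,k}=d\Psi(k/d)\le 2k$. Near $q=0$ we have $\sqrt{q(1-q)}=\sqrt q+O(q^{3/2})$. Since $\log 2-\HH(\tfrac12-s)=2s^2+O(s^4)$, we get $\Psi(q)=2q(1-q)+O(q^2)=2q-O(q^2)$. Consequently $\Lambda_{d,k}=2k-O(k^2/d)$. Choosing $d\ge k^{5/3}$ makes $2k-\Lambda_{d,k}\le Ck^{1/3}$.

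\textbf{Upper bound.} This needs a bound on $\Npar(d,k,m,A,\delta)$ uniformly over all $d$, including $d<k/q_0$, where \cref{thm:main} does not apply. We pass to a larger ambient dimension. Any $V\subseteq\cP_{d,k}$ embeds in $\cP_{d',k}$ for $d'\ge d$ by letting its functions ignore the extra coordinates. Under $\mu_{d'}$ the law of $(\Phi_V(X),Y)$ restricted to the first $d$ coordinates is identical, and the extra coordinates are independent noise an estimator can discard. Hence $\mathcal R^*_{n,\delta}(V)$ is unchanged, so $\mathfrak R_{n,\delta}(d,k,m)\le\mathfrak R_{n,\delta}(d',k,m)$ and $\Npar(d,\cdot)\le\Npar(d',\cdot)$.

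It therefore suffices to bound $\sup_{d'\ge k/q_0}\Npar(d',k,m,A,\delta)$ with, say, $q_0=1/4$. The upper half of \cref{thm:main} holds for every feasible $m$ and gives
\[
\log\frac{\Npar}{m+t}\le\Lambda_{d',k}+C_Ak^{1/3}\le 2k+C_Ak^{1/3}.
\]
One caveat is that $m\le D_{d,k}\le D_{d',k}$, so feasibility is preserved. I am assuming the upper-bound constant of \cref{thm:main} is uniform in $d$ once $q_0$ is fixed, which is how the theorem is stated.

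\textbf{Lower bound.} We need a single $d$ with $m\le \Mhard\vee t$, $k\le d/4$, and $2k-\Lambda_{d,k}\le Ck^{1/3}$. Take $d$ huge, for instance $d\ge\max(k^{5/3},4k,m^{2})$ and large enough that $\binom{d}{\lfloor k^{1/3}\rfloor}\ge m$.

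A snag arises when $k^{1/3}<1$; this cannot happen since $k\ge1$ gives $\lfloor k^{1/3}\rfloor\ge1$. Then $\Mhard\ge d\ge m$ for large $d$, and $m\le D_{d,k}$ as well. The lower half of \cref{thm:main} then gives
\[
\log\frac{\Npar}{m+t}\ge\Lambda_{d,k}-Ck^{1/3}\ge 2k-C'k^{1/3}.
\]

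\textbf{Expected obstacle.} The main obstacle is the monotonicity/embedding step in the upper bound: we must verify that the minimax risk over $V$ viewed in $\cP_{d',k}$ equals that in $\cP_{d,k}$. This follows because the extra coordinates form ancillary randomness that the estimator may ignore, and conversely, a randomized estimator in $d$ dimensions can simulate them. The expansion of $\Psi$ is routine Taylor calculus.
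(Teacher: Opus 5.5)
Your proof is correct. The lower bound is the paper's argument: the paper also fixes one large dimension, $d=\lceil\max\{m+1,10k,k^{5/3}\}\rceil$, and applies the lower half of \Cref{thm:main} with $\Lambda_{d,k}\ge 2k-Ck^2/d$.

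The upper bound takes a genuinely different route. The paper bounds every $\Npar(d,k,m,A,\delta)$ directly, uniformly in $d$ and including $k>d$, using classical Bonami--Beckner hypercontractivity. \Cref{prop:classical-upper} gives the dimension-free clipping level $\log\sup_d T_{d,k}(\eta)\le 2k+O_\eta(k^{1/3})$, which is then passed through \Cref{prop:clip-cov} and \Cref{cor:ols}. You instead show that $\Npar$ is nondecreasing in $d$ by embedding $V\subseteq\cP_{d,k}$ into $\cP_{d',k}$. You then apply the Kirshner--Samorodnitsky-based upper half of \Cref{thm:main} at $d'=\max(d,4k)$ and use $\Lambda_{d',k}\le 2k$.

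The embedding step is sound. The cleanest justification is \Cref{thm:exact-minimax}: $Z_{n,V}$ depends only on the law of $\Phi_V(X)$, and the embedding preserves that law. In your operational argument, the inequality $\mathfrak R_{n,\delta}(d,k,m)\le\mathfrak R_{n,\delta}(d',k,m)$ comes from the simulation direction, where a randomized $d$-dimensional estimator generates the extra coordinates. The ``discard'' direction gives the opposite inequality.

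Your route needs no new analytic input and records a useful structural fact, namely that the threshold is monotone in $d$. However, it relies on the much deeper dimension-dependent moment bound in a regime where that bound is not needed. The paper's route is self-contained and has explicit constants. It also shows that classical hypercontractivity alone is already sharp up to $O(k^{1/3})$ when the ambient dimension is unrestricted.
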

However, retaining the dependence on $d$ can lead to exponentially smaller thresholds when $k\ll d\ll k^{5/3}$; see Appendix~\ref{app:modeldimscaling}.

\subsubsection{The exponential cost of noise}
The following corollary shows that there exist models with optimal noiseless recovery but exponential noisy sample requirements.
At fixed confidence, \(O(m)\) noiseless observations identify every target in the known model, whereas attaining the noisy parametric rate requires \(m\exp\{\Lambda_{d,k}-O(k^{1/3})\}\) observations, an exponential increase in \(k\) since \(\Lambda_{d,k}\ge k\log4\).
Thus noiseless sample complexity alone offers no reliable indication of how much data noisy estimation will require.
The models for which we show this separation are slight variants of the hard instances used to prove \Cref{thm:main}.

\begin{corollary}[Noisy--noiseless separation in a fixed model]
\label{cor:noise-cost}
Fix $q_0\in(0,1/2)$ and $A\ge16$.
For all sufficiently large $k\le q_0d$, depending only on
$A,q_0$, and every $1\le m\le d-1$, there exists an
$m$-dimensional $V\subseteq\cP_{d,k}$ such that, for every
$t\ge\log4$:
\begin{itemize}
\item $C(m+t)$ noiseless observations suffice to recover
every target in $V$ exactly, with probability at least
$1-e^{-t}$ over the sampled inputs.
\item Under Gaussian noise of any variance $\sigma^2>0$,
\[
 \mathcal R^*_{n,e^{-t}}(V)
 >A\sigma^2\frac{m+t}{n}
 \quad\text{if}\quad
 1\le n\le(m+t)e^{\Lambda_{d,k}-C_{A,q_0}k^{1/3}}.
\]
Conversely, OLS attains the parametric error bound whenever
$n\ge(m+t)e^{\Lambda_{d,k}+C_{A,q_0}k^{1/3}}$.
\end{itemize}
Here $C$ is a universal constant, and the subspace $V$ can be chosen independently of $t$ and $\sigma$.
\end{corollary}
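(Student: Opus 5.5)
We construct $V$ as a modification of the hard instance behind the lower bound of \Cref{thm:main}. The aim is one $m$-dimensional subspace in which every function concentrates its energy on a common rare set, yet no nonzero function can vanish on many inputs. The natural choice is to take the $m\le d-1$ hard directions to be radial-type polynomials with a linear ``carrier'' added. Concretely, I would take functions of the form
\[
\phi_j=\alpha\,\psi_j+\beta\,\ell_j ,
\]
where the $\psi_j$ are the concentrated low-degree functions from the lower-bound construction (energy on a shared pair of Hamming balls of measure $e^{-\Lambda_{d,k}+O(k^{1/3})}$), and the $\ell_j$ are linear functions such as $x_j-x_{j+1}$. The weight $\beta$ is chosen exponentially small, for instance $\beta^2=e^{-\Lambda_{d,k}+Ck^{1/3}}/(m+t)$. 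With this choice the linear part is invisible to the noisy problem below the threshold, but it still forces genericity for noiseless identification. One should check that the $m$ directions remain nearly orthonormal and that degree $\le k$ is kept.

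\textbf{Noiseless part.} Exact recovery fails only if some nonzero $h\in V$ vanishes on all sampled inputs. I would argue that the evaluation map on $V$ is generically injective after $C(m+t)$ samples, using an anticoncentration/Schwartz--Zippel-type bound. Every nonzero $h\in V$ has a nonzero multilinear part of low total structure (from the $\ell_j$ component), so $\Pp(h(X)=0)\le 1-c$ for a universal $c$. This gives $\Pp(h(X)=0)\le 1-c$ uniformly over $h\in V\setminus\{0\}$, but it must be made uniform over the whole subspace. Here I would use the standard rank-growth argument: sample sequentially and note that each new sample reduces the dimension of $\{h\in V: h(X_1)=\dots=h(X_i)=0\}$ with probability at least $c$. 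A Chernoff bound on the number of successes among $C(m+t)$ trials then gives failure probability $e^{-t}$.

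\textbf{Noisy part.} For the lower bound, I would run the lower-bound argument of \Cref{thm:main}, presumably a Gaussian/Bayesian minimax bound via $\E\operatorname{tr}(\hSigma_V^{-1})$ or a Fano/Le Cam bound. The key input is that, for $n\le(m+t)e^{\Lambda_{d,k}-Ck^{1/3}}$, with probability above $\delta$ no sample lands in the concentration set. On that event, every $h\in V$ has $\E_n[h^2]\le\rho\E[h^2]$ with tiny $\rho$, namely the residual energy plus $\beta^2$, so the posterior spread in every direction exceeds $A\sigma^2(m+t)/n$. The OLS upper bound follows directly from the upper bound in \Cref{thm:main}, which holds for every subspace of $\cP_{d,k}$.

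The main obstacle is reconciling the two requirements. The linear perturbation must be large enough to give uniform anticoncentration, meaning the probability of vanishing stays bounded away from $1$ for every $h$, not just for fixed directions. At the same time, it must be small enough not to destroy the shared concentration needed for the lower bound. I would first try to make anticoncentration depend only on the \emph{support} pattern of the linear part and not its size, since vanishing is scale-free. If that works, $\beta$ can be taken arbitrarily small and the tension disappears.
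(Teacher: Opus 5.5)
\textbf{Gap in the noiseless part.} You derive $\Pp(h(X)=0)\le 1-c$ for every nonzero $h\in V$ from the presence of a nonzero linear component, but that inference is false. For example, $h(x)=x_1\prod_{i=2}^{k}(1+x_i)$ has Fourier coefficient $1$ on $\{1\}$, yet it is nonzero only on a $2^{-(k-1)}$ fraction of the cube.

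In an additive mixture $\alpha\sum_j a_j\psi_j+\beta\sum_j a_j\ell_j$, the zero set depends on how the degree-$k$ part and the linear part interact. So neither a tiny $\beta$ nor the scale-invariance of vanishing gives a uniform bound. Moreover, if the $\psi_j$ come from the harmonic lift with $R=\lfloor k^{1/3}\rfloor$, their harmonic factors alone can be nonzero on only a $2^{-R}$ fraction.

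Your closing hope, that vanishing should depend only on the linear part, is realized multiplicatively rather than additively. Take an $m$-dimensional $U\subseteq\cH_{d,1}$ (e.g.\ spanned by your $x_j-x_{j+1}$) and set $V=\{g(S_d)f:f\in U\}$. Here $g$ is the normalization of $w+\varepsilon$, where $w$ is the radial witness of \Cref{lem:scalar-witness} with degree $k-1$ on the $(d-2)$-cube and leakage $\rho/4$. The shift $\varepsilon\in(0,\sqrt\rho/4)$ is a fixed constant chosen to avoid the finitely many values $-w(s)$, $s\in\{-d,-d+2,\dots,d\}$.

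With this choice, $g(S_d)$ never vanishes on the cube, so $g(S_d)f$ vanishes exactly where $f$ does. A nonzero linear $f$ is nonzero with probability at least $1/2$, since flipping one coordinate in its support changes $f$ by a nonzero amount. Your rank-growth plus Chernoff argument then gives $O(m+t)$ samples. \Cref{lem:harmonic-radial} gives $\|g(S_d)f\|_2=\|f\|_2$ and leakage at most $\rho$ for all $f\in U$ simultaneously.

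With $\psi_j=(x_j-x_{j+1})w(S_d)$ and $\ell_j=x_j-x_{j+1}$, your $\phi_j$ would be exactly $(x_j-x_{j+1})(\alpha w(S_d)+\beta)$. The operative requirement, however, is that $\beta/\alpha$ be generic and of constant size, not exponentially small. Your choice $\beta^2\propto 1/(m+t)$ also makes $V$ depend on $t$, which the statement rules out.

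\textbf{Gap in the noisy part.} The mechanism you name is too weak when $m\gg t$. The event that no sample lands in $E$ has probability at most $e^{-np}$, which exceeds $e^{-t}$ only when $np\lesssim t$. It therefore yields the threshold $t/p$ rather than $(m+t)/p$. Even on that event, $\E_n[h^2]\lesssim\rho\,\E[h^2]$ holds only in conditional expectation for a fixed $h$, not uniformly over $V$.

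The $m$-dependence instead comes from the rank argument of \Cref{lem:rare-cov}(i). With constant probability at most $m/4$ samples land in $E$, so $\hSigma_E$ vanishes on a subspace of dimension at least $3m/4$. On that subspace, the trace bound $\E\,\Tr\hSigma_{E^c}\le m\rho$ forces $\lfloor m/2\rfloor$ eigenvalues of order $\rho$, which produces error of order $\sigma^2m/(n\rho)$.

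Once the multiplicative $V$ satisfies $\E[\Phi_V\Phi_V^\top\1_{E^c}]\preceq\rho I_m$, you can invoke \Cref{prop:rare-par} directly. Your OLS direction is fine as stated.
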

Appendix~\ref{app:noise-cost} establishes a more general tradeoff, parametrized by an integer \(R\). It allows model dimensions \(m\le\binom dR-\binom d{R-1}\), with noiseless recovery from \(O(2^R(m+t))\) samples and a noisy sample lower bound of \((m+t)\exp\{\Lambda_{d,k}-O(R+k^{1/3})\}\).

\subsubsection{A quantitative subspace uncertainty principle}
The following result answers the geometric question posed above by quantifying energy concentration jointly over a subspace.
\begin{questionbox}
\begin{theorem}[Subspace uncertainty principle]\label{thm:common}
Fix $q_0\in(0,1/2)$, $\rho\in(0,1)$, and integers $1\le k\le q_0d$. For every set $E$, if some nonzero $h\in\cP_{d,k}$ satisfies $\E[h^2\1_{E^c}]\le\rho \E[h^2]$, then
\begin{equation*}
 \mu_d(E)\ge\exp\{-\Lambda_{d,k}-C_{\rho,q_0}k^{1/3}\}.
\end{equation*}
Conversely, there exist $W\subseteq\cP_{d,k}$ of dimension $\Mhard$ and a permutation-invariant set $E$ such that
\begin{equation*}\label{eq:common-main}
 \mu_d(E)\le\exp\{-\Lambda_{d,k}+C_{\rho,q_0}k^{1/3}\},
 \qquad \forall h\in W:~\E[h^2\1_{E^c}]\le\rho \E[h^2].
\end{equation*}
For sufficiently large $k$, $E$ is a Hamming ball.
\end{theorem}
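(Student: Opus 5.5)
Both directions reduce to radial (symmetric) analysis around the Polyanskiy--Samorodnitsky boundary. The quantitative input is the level $\frac12-\sqrt{q(1-q)}$ with $q=k/d$. This is where the Krawtchouk polynomial $K_k$ has its last zero and where its energy $K_k(w)^2\binom dw 2^{-d}$ peaks. The Hamming ball $B$ of radius $w^*=d(\tfrac12-\sqrt{q(1-q)})$ around a point has $\mu_d(B)=\exp\{-\Lambda_{d,k}+O(\log d)\}$ by the entropy estimate for binomial tails.

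\textbf{Lower bound (any single $h$).} Split $h=h\1_E+h\1_{E^c}$. The hypothesis gives $\E[h^2\1_E]\ge(1-\rho)\E[h^2]$, so it suffices to bound the maximal fraction of energy a degree-$\le k$ function can place on a set of measure $p$. By duality, I would bound $\E[h^2\1_E]\le \|\1_E\|_{L_{r'}}\|h\|_{L_{2r}}^2=p^{1/r'}\|h\|_{2r}^2$ and optimize over $r$. Here I would use a sharp moment bound of the form $\|h\|_{2r}^2\le \exp\{\tfrac1{r'}\Lambda_{d,k}+O(k^{1/3})\}\|h\|_2^2$ in the relevant regime; this is the Kirshner--Samorodnitsky moment bound cited in the abstract, refined to track the error uniformly in $k\le q_0d$. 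Combining the two bounds gives $(1-\rho)\le p^{1/r'}e^{\Lambda/r'+O(k^{1/3})}$. I would choose $r'$ of order $k^{1/3}$, so that both the $\log(1-\rho)$ term and the moment error cost only $O(k^{1/3})$ in the exponent. The main obstacle is the uniform $k^{1/3}$ precision of the moment bound. I would get it through the saddle-point analysis of Krawtchouk asymptotics near the turning point, where Airy-type behaviour at scale $k^{1/3}$ appears naturally and produces exactly that error.

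\textbf{Upper bound (construction).} I would take $E$ to be the Hamming ball of radius $w_1=w^*+Ck^{2/3}$ around $\mathbf 1$, i.e.\ the points with at most $w_1$ coordinates equal to $-1$. Its measure is $\exp\{-\Lambda+O(k^{1/3})\}$, since $\HH'$ is bounded for $q\le q_0$ and the entropy slope then contributes $O(k^{2/3}/\sqrt{d})\le O(k^{1/3})$. For $W$, with $j=\lfloor k^{1/3}\rfloor$, take $W=\{\chi\cdot g: \chi\in\mathcal H_j\}$. Here $\mathcal H_j$ is the space of degree-$j$ functions of the form $\chi_S$ combinations that are orthogonal to lower levels, i.e.\ harmonic of degree $j$, with dimension $\approx\binom dj$; one may take the span of $\chi_S$, $|S|=j$, to get dimension exactly $\Mhard$. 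The factor $g$ is a radial polynomial of degree $k-j$ in $|x|$ whose energy, weighted appropriately, concentrates beyond the turning point at the pole $\mathbf 1$. A natural choice is $g=\prod_{i}(w_0-\text{wt}(x))$-type Krawtchouk-like factors, for example $g(x)=K_{k-j}(\text{wt}(x))\,$ or a product $\prod(1+x_i)$-tilted radial polynomial, chosen so that $\chi_S g$ have near-orthogonal Gram matrix.

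The key estimate is then uniform in $h=\sum a_S\chi_S g$. I would write $\E[h^2\1_{E^c}]$ using symmetry. Averaging over permutations reduces both $\E[h^2]$ and $\E[h^2\1_{E^c}]$ to one-dimensional sums over the Hamming weight, with weights given by $g^2$ times a factor depending on $\|a\|$ and on $j$ only polynomially, of size $e^{O(j\log d)}$ hmm. To keep this factor at $e^{O(k^{1/3})}$, I would instead compare ratios at each weight level. Multiplying by $\chi_S$ shifts the weight distribution by at most $j$ in effect, so the radial tail ratio of $g$ outside radius $w_1$, which is $e^{-ck^{2/3}\cdot\text{slope}}$, dominates once $C$ is large. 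Showing this level-wise Rayleigh-quotient comparison is the second obstacle. For large $k$ the set $E$ is the single ball; for small $k$ all constants are absorbed into $C_{\rho,q_0}$.
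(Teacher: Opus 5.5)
Your lower-bound direction works in outline, but the construction has a genuine gap. Its central object is never produced: a radial polynomial of degree at most $k$ that puts a $1-\rho$ fraction of its energy on a ball of measure $e^{-\Lambda_{d,k}+O(k^{1/3})}$. None of your candidates works. A single Krawtchouk polynomial $e_\ell$ has $e_\ell^2$ even in $S_d$, and the three-term recurrence gives $\E[S_d^2e_\ell^2]=a_\ell^2+a_{\ell-1}^2=\tfrac12\tau^2+d$ with $\tau=2\sqrt{\ell(d-\ell)}$. Hence at most about a quarter of its energy lies in $\{S_d\ge(1-o(1))\tau\}$: its energy is spread over the whole oscillatory region, not peaked at the last zero. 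Powers such as $S_d^\ell$ put their energy near $|S_d|\approx\sqrt{2\ell d}$ when $d\gg\ell$ (the introduction's toy example), far below $\tau\approx2\sqrt{\ell d}$, so they lose a constant factor in the exponent.

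The missing idea is the smooth window of \Cref{lem:window}. Take $p=\sum_jc_je_j$ on the $L=\lfloor\ell^{1/3}\rfloor$ consecutive degrees ending at $\ell$, with tapered coefficients $c_{\ell-L+r}\propto b(r/(L+1))$ and $b(x)=x^2(1-x)^2$. By the recurrence, $p$ is then an approximate eigenvector of multiplication by $S_D$ with $\|(S_D-\tau)p\|_2\le C_{q_0}\sqrt D\,\ell^{-1/6}$, and Chebyshev gives leakage at most $\rho$ above $u=\tau-M\sqrt D\,\ell^{-1/6}$.

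This also shows your radius is mis-scaled. The Chernoff rate has slope $\artanh(\tau/d)\asymp\sqrt{k/d}$ per unit of $S_d$. Equivalently, $\HH'(w^*/d)$ is of order one when $d\asymp k$, not $O(1/\sqrt d)$. So your shift of $Ck^{2/3}$ in Hamming radius costs $\Theta(k^{2/3})$ in the exponent when $d\asymp k$. The shift $\asymp\sqrt d\,k^{-1/6}$ dictated by the residual costs only $O(k^{1/3})$, uniformly in $d$. When $d\gg k^{5/3}$, your shift is instead negligible on that edge scale and cannot be expected to capture a $1-\rho$ fraction.

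The passage to $\binom d{\lfloor k^{1/3}\rfloor}$ directions is also left open (your ``level-wise comparison''), and with a single factor $g$ it is not automatic. By the Filmus--Mossel identity (\Cref{lem:harmonic-radial}), for $f\in\cH_{d,r}$ the law of $S_d$ under $f^2/\|f\|_2^2$ is exactly that of $S_{d-2r}$. This reweights $g^2$ by roughly $(1-s^2/d^2)^r$, a relative factor up to $e^{\Theta(r)}$ between the bulk and the turning point. So concentration of $g$ under $\mu_d$ does not carry over to the higher harmonic sectors without further control.

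The paper uses the identity constructively in \Cref{prop:harmonic-lift}. Each sector $r\le R$ gets its own factor $p_r$ of degree $k-r$, built on the $(d-2r)$-cube, so the leakage in $\cH_{d,r}p_r(S_d)$ equals that of $p_r$ exactly. Distinct harmonic degrees remain orthogonal under every radial weight, including $\1_{E^c}$, so leakages add without loss. A common cutoff below the smallest center $\tau_R$ then costs $O(R+k^{1/3})$ and gives dimension $\binom dR$.

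Finally, in your lower bound, H\"older at exponent $2+2s$ is equivalent to the paper's clipping, but your bookkeeping hides the crux. With $r'\asymp k^{1/3}$, an $O(k^{1/3})$ error in $\log\|h\|_{2r}^2$ becomes $O(k^{2/3})$ after multiplying through by $r'$. What you need is $\log\E[|h|^{2+2s}]\le s\Lambda_{d,k}+O(ks^3)$, with no $s^2$ term. This follows from the exact Kirshner--Samorodnitsky rate via the elementary expansion $F(2+2s,q)\le s\Psi(q)+C_0qs^3$ (\Cref{lem:cubic}); no Krawtchouk or Airy asymptotics are needed there.
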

\end{questionbox}
We refer to $\rho$ as the \emph{leakage}, the fraction of energy permitted outside the common set.
Let $p_\rho(d,k,m)$ be the smallest probability of a set $E$ admitting an $m$-dimensional $W\subseteq\cP_{d,k}$ with $\E[h^2\1_{E^c}]\le\rho\E[h^2]$ for every $h\in W$. The next result shows that the uniform remainder cannot be improved to $o(k^{1/3})$ in general.

\begin{proposition}[Optimal order of the uniform remainder]\label[proposition]{prop:airy-sharp}
There are a fixed $\rho_*\in(0,1)$, integers $d_k\ge k^3$, and $C<\infty$ such that, for all sufficiently large $k$ and all
$1\le m\le\binom{d_k}{\lfloor k^{1/3}\rfloor}$,
\begin{equation*}
 2k^{1/3}\le
 \log\frac1{p_{\rho_*}(d_k,k,m)}-\Lambda_{d_k,k}
 \le Ck^{1/3}.
\end{equation*}
\end{proposition}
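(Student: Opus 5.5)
The two inequalities have separate sources. The right-hand inequality $\log(1/p_{\rho_*})-\Lambda_{d_k,k}\le Ck^{1/3}$ is a lower bound on $p_{\rho_*}$. It should follow from the first half of \Cref{thm:common}. Any admissible $E$ for an $m$-dimensional $W$ in particular carries a single nonzero $h\in W$ with $\E[h^2\1_{E^c}]\le\rho_*\E[h^2]$. Hence $\mu_d(E)\ge\exp\{-\Lambda_{d,k}-C_{\rho_*,q_0}k^{1/3}\}$, with $q_0=1/4$ say, which is valid once $k\le d_k/4$. So the real content is the left-hand inequality. It is an upper bound on $p_{\rho_*}$: for every $m\le\binom{d_k}{\lfloor k^{1/3}\rfloor}$ we must exhibit an $m$-dimensional $W\subseteq\cP_{d_k,k}$ and a set $E$ with $\mu(E)\le\exp\{-\Lambda_{d_k,k}-2k^{1/3}\}$ carrying a $(1-\rho_*)$-fraction of the energy of every $h\in W$. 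It suffices to treat the largest $m$, since subspaces inherit the property.

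\textbf{Regime and Gaussian picture.} I would take $d_k$ a large multiple of $k^3$, say $d_k=\lceil K k^3\rceil$. For $q=k/d$ small, the expansion $\HH(\tfrac12-s)=\log2-2s^2-O(s^4)$ gives $\Psi(q)=2q-O(q^2)$. Hence $\Lambda_{d_k,k}=2k-O(k^2/d_k)=2k+O(1/k)$, so it suffices to beat $e^{-2k}$ by $e^{-2k^{1/3}-O(1)}$.

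In this regime the radial variable $Z=(x_1+\dots+x_d)/\sqrt d$ is Gaussian to high accuracy on the relevant window $|z|\lesssim 2\sqrt k$. The radial degree-$j$ orthogonal polynomials (Krawtchouk) are close to the Hermite polynomials $\mathrm{He}_j$. The energy density of $\mathrm{He}_j^2\varphi$ lives in $|z|\le 2\sqrt j$, but near the turning point $2\sqrt j$ it has Airy scaling: in the variable $s=(z-2\sqrt j)j^{1/6}$, the normalized energy converges to a fixed Airy-type profile. That profile puts a positive constant mass $\beta(a)>0$ on $\{s\ge a\}$ for each fixed $a$.

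Take $E=\{x:|Z(x)|\ge 2\sqrt k+a k^{-1/6}\}$, a union of two Hamming balls. Its Gaussian measure is
\[
\exp\{-(2\sqrt k+ak^{-1/6})^2/2+O(\log k)\}=\exp\{-2k-2a k^{1/3}+O(\log k)\}.
\]
Taking any fixed $a>1$ therefore gives $\mu(E)\le e^{-\Lambda-2k^{1/3}}$ for large $k$, once the binomial-versus-Gaussian tail comparison is justified.

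\textbf{Common subspace.} To obtain $\binom{d}{\lfloor k^{1/3}\rfloor}$ dimensions I would mimic the construction behind the second half of \Cref{thm:common}. Take products $h=g_S\cdot P_{k-r}(Z')$, where $g_S$ ranges over an orthonormal system of degree $r=\lfloor k^{1/3}\rfloor$ on a few coordinates, or over degree-$r$ spherical harmonics of the cube, and $P_{k-r}$ is the top radial polynomial. Reducing the radial degree from $k$ to $k-r$ shifts the turning point by $O(r/\sqrt k)=O(k^{-1/6})$, which is the same Airy scale. Likewise the factor $g_S$ perturbs the radial energy profile only at order $O(r/\sqrt d)$, which is negligible for $d\ge k^3$.

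Every such $h$ then has a fraction at least $\beta(a')-o(1)$ of its energy in $E$, uniformly over $S$. Here $a'=a+O(1)$ accounts for the shift. Pick the fixed leakage $\rho_*:=1-\tfrac12\beta(a')$. To handle all of $W$ rather than only basis elements, I would choose the family so that the $h\1_E$ are nearly orthogonal across distinct $S$, for example via the symmetry of $E$ under coordinate permutations and the orthogonality of harmonics of different types. Then the energy fraction passes to linear combinations.

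\textbf{Main obstacle.} The hard step is the uniform Airy-scale asymptotics: a lower bound on the energy fraction of the Krawtchouk (radial) polynomial beyond $2\sqrt k+ak^{-1/6}$, uniform over degrees $k-r$ with $r\le k^{1/3}$ and robust to multiplication by the low-degree factors. I would first try to reduce to the Hermite case. I would quantify the Krawtchouk-to-Hermite approximation, with relative error $O(k^{3/2}/\sqrt d)$ pointwise on the window, and then invoke the classical Plancherel--Rotach turning-point asymptotics for $\mathrm{He}_j$. This choice is what dictates $d_k\ge Kk^3$.
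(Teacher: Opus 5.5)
\textbf{There is a genuine gap, in the choice of radial factor for the left-hand inequality.} Your handling of the right-hand inequality, via the impossibility half of \Cref{thm:common} with $q_0=1/4$, agrees with the paper, as does the reduction to the largest $m$.

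You take $P_{k-r}$ to be the single top Krawtchouk polynomial, essentially $H_j$ with $j=k-r$. You then assert that a fixed fraction $\beta(a)>0$ of its energy lies in $\{s\ge a\}$, where $s=(z-2\sqrt j)j^{1/6}$. This is false. The Plancherel--Rotach edge asymptotics give $H_j(z)^2\varphi(z)\approx j^{-1/6}\operatorname{Ai}(s)^2$ near the turning point, and $dz=j^{-1/6}\,ds$, so
\[
 \int_{\{s\ge a\}}H_j^2\,d\gamma\approx j^{-1/3}\int_a^\infty\operatorname{Ai}(s)^2\,ds\longrightarrow0 .
\]
This is consistent with your own remark that the energy lives in $|z|\le2\sqrt j$: the limiting arcsine profile puts mass only of order $j^{-1/3}$ in an edge window of width $j^{-1/6}$. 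Your set $E$ begins even beyond the turning point $2\sqrt{k-r}\approx2\sqrt k-k^{-1/6}$. So every function in your subspace keeps only an $O(k^{-1/3})$ fraction of its energy on $E$. The leakage therefore tends to one, and no fixed $\rho_*<1$ results. The step you flag as the main obstacle would confirm this failure rather than repair it.

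\textbf{The missing idea is to superpose coherently the roughly $k^{1/3}$ top degrees whose turning points fall in the Airy window.} The paper uses the reproducing-kernel polynomial $q_K(x)\propto\sum_{j=0}^{K}H_j(y_N)H_j(x)$, with $K=k-\lfloor k^{1/3}\rfloor$, $N=K+1$ and $y_N=2\sqrt N+3N^{-1/6}$. Then $q_K^2\varphi=K_N(\cdot,y_N)^2/K_N(y_N,y_N)$. The edge limit $N^{-1/6}K_N(x_N(s),x_N(t))\to K_{\mathrm{Ai}}(s,t)$ yields a fixed energy fraction $\alpha=\int_3^4K_{\mathrm{Ai}}(s,3)^2\,ds/K_{\mathrm{Ai}}(3,3)>0$ on $[x_N(3),x_N(4)]$. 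This interval lies above $2\sqrt k+k^{-1/6}$.

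Given that factor, the remaining steps are easier than you anticipate.
\begin{itemize}
\item For fixed $k$, one lets $d\to\infty$ and uses the CLT with moment convergence. No quantitative Krawtchouk--Hermite comparison or explicit $d_k$ is needed.
\item The exact identity of \Cref{lem:harmonic-radial} on $\bigoplus_{r=0}^{R}q_K(S_d/\sqrt d)\,\mathcal H_{d,r}$ replaces your near-orthogonality heuristic. It gives dimension exactly $\binom dR$, whereas a single harmonic degree $R$ gives only $\binom dR-\binom d{R-1}$.
\item The one-sided set $E=\{S_d/\sqrt d\ge2\sqrt k+k^{-1/6}\}$ is controlled directly by the cube Chernoff bound $\mu_d(E)\le e^{-(2\sqrt k+k^{-1/6})^2/2}$ together with $\Lambda_{d,k}\le2k$. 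No binomial-versus-Gaussian tail comparison is required.
\end{itemize}
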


\subsection{Related work}\label{sec:related}

\noindent\textbf{Minimax regression and low-degree learning.}
Mourtada relates expected minimax least-squares risk to inverse sample covariance \cite{Mou22}. El Hanchi, Maddison, and Erdogdu characterize the minimax risk through an exact quantile formula that underlies our statistical analysis \cite{EHE24}.
Without a supplied subspace, the sample complexity of learning bounded low-degree functions from noiseless evaluations grows logarithmically with ambient dimension, as shown by the upper and lower bounds in \cite{EI22,EIS23}, respectively.
Related work on Reed--Muller erasure recovery asks whether randomly observed values uniquely determine a low-degree polynomial over a finite field \cite{ASW15}, rather than how Gaussian noise affects estimation accuracy.

\par\smallskip
\noindent\textbf{Random least squares and empirical covariance bounds.}
Hsu, Kakade, and Zhang analyze population prediction error under random design \cite{HKZ12}, while Cohen, Davenport, and Leviatan study least-squares stability and approximation accuracy \cite{CDL13}. Both obtain sufficient sample bounds controlled by $\sup_x\|\Phi_V(x)\|_2^2$ in case of an orthonormal basis.
Our sampling upper bound instead exploits low Boolean degree to control the energy retained after clipping, yielding sharper guarantees in regimes where maximum-value bounds are overly conservative.
Yaskov derives lower bounds on the smallest empirical covariance eigenvalue using truncated moments of one-dimensional projections \cite{Yas14}. Oliveira establishes lower covariance bounds under fourth-moment assumptions and applies them to random-design least squares \cite{Oli16}.
Combining truncation with empirical-process bounds is an established approach: Mendelson uses bounded Lipschitz approximations in the small-ball method \cite{Men14}, and clipped squares together with contraction and concentration in \cite{Men21}.
We follow this approach, using Bousquet's inequality \cite{Bou02} to exploit variance bounds for the clipped functions. Our technical contribution to this approach is a sharp clipping scale for low-degree functions on the Boolean cube.

\par\smallskip
\noindent\textbf{Uncertainty, log-Sobolev inequalities, and sharp moments.} 
Log-Sobolev inequalities relate concentration of a function's energy to its Fourier complexity \cite[Chapter~5]{BLM13}. Samorodnitsky's modified log-Sobolev inequality strengthens this relation and connects it to small-set geometry \cite{Sam08}. Polyanskiy and Samorodnitsky develop nonlinear log-Sobolev inequalities and improved hypercontractivity to obtain their uncertainty curve \cite{PS19}. Their theorem identifies the asymptotic concentration boundary at fixed $k/d$ (see Appendix~\ref{app:baselines}), but does not provide the simultaneous concentration in many directions needed for our dimension-dependent regression lower bound.
We construct such subspaces and obtain a uniformly optimal $O(k^{1/3})$ remainder.
Using moments of order approaching two, classical Bonami--Beckner hypercontractivity \cite{Bon70,Bec75} already gives the exponential factor $\exp\{2k+O(k^{1/3})\}$ (see \Cref{prop:classical-upper}), which is sharp for unrestricted ambient dimension.
For our dimension-dependent results, classical hypercontractivity, even when tuned for the optimal moment parameter, is not sharp enough. Instead, to match the sharper finite-dimensional exponent $\Lambda_{d,k}=d\Psi(k/d)$, we optimize the Kirshner--Samorodnitsky moment bound \cite{KS21}.

\par\smallskip
\noindent\textbf{Harmonic and polynomial tools.}
We use Filmus--Mossel's dimension formula and inner-product identities under exchangeable measures \cite{FM19} to extend one radial concentration construction to many orthogonal directions.
Our sharpness argument uses the Hermite kernel's Airy limit \cite{DG07} to construct a Gaussian polynomial with positive energy on a tail interval, before transferring the construction to the cube.

\section{Technical overview}

\subsection{From the empirical covariance spectrum to minimax risk}
Consider the ordinary least-squares (OLS) estimator $\widehat\theta_{\mathrm{OLS}}\in\operatorname*{arg\,min}_{a\in\mathbb R^m}\frac1n\sum_{i=1}^n\bigl(Y_i-a^\top\Phi_V(X_i)\bigr)^2$. In the Gaussian noise model, elementary algebra shows that a full-rank sample yields an estimation residual distributed as
\[
 (\widehat\theta_{\rm OLS}-\theta)\mid\mathbf X
 \sim N\!\left(0,\frac{\sigma^2}{n}\hSigma_V^{-1}\right).
\]
It turns out that studying this quantity fully captures statistical hardness under this data model. Indeed, the following result, a direct consequence of \cite[Theorem~3, Corollary~6, and Lemma~5]{EHE24}, identifies the corresponding quantile as the minimax risk over all estimators.

\begin{theorem}[El Hanchi--Maddison--Erdogdu]\label[theorem]{thm:exact-minimax}
For a fixed known subspace $V$ in \eqref{eq:model}, let $G=(G_1,\ldots,G_m)\sim N(0,I_m)$ be independent of the sampled inputs, and define
\[
 Z_{n,V}=\begin{cases}
 (\sigma^2/n)G^\top\hSigma_V^{-1}G,&\hSigma_V\text{ invertible},\\
 +\infty,&\hSigma_V\text{ singular}.
 \end{cases}
\]
Then $\mathcal R^*_{n,\delta}(V)=Q_{1-\delta}(Z_{n,V})$, and any measurable least-squares minimizer attains this minimax value. Moreover, $\mathcal R^*_{n,\delta}(V)=+\infty$ if and only if $\Pr(\hSigma_V\text{ singular})\ge\delta$.

\end{theorem}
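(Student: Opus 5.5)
The statement is a direct consequence of \cite[Theorem~3, Corollary~6, Lemma~5]{EHE24}, so I would not reprove the decision-theoretic core. Instead, I would check that the setting \eqref{eq:model} fits their hypotheses and translate their formula into the present notation. Because the basis is orthonormal, $L(\theta,\widehat\theta)=\|\widehat\theta-\theta\|_2^2$. The problem is therefore Gaussian linear regression with random design: the feature vectors are $\Phi_V(X_i)$ with known distribution, the loss is squared $\ell_2$ loss in coefficient space, and the noise is $N(0,\sigma^2)$ and independent of the inputs. This is the setting of \cite{EHE24}, specialized to the case where the population covariance is $\Sigma_V=I_m$. The risk functional $Q_{1-\delta,\theta}$ is their quantile-of-loss criterion.

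The upper bound comes from the OLS calculation recorded just before the statement. On the event $\{\hSigma_V \text{ invertible}\}$, we have $\widehat\theta_{\mathrm{OLS}}-\theta\mid\mathbf X\sim N(0,(\sigma^2/n)\hSigma_V^{-1})$, so $\|\widehat\theta_{\mathrm{OLS}}-\theta\|_2^2\mid\mathbf X$ has the same law as $(\sigma^2/n)G^\top\hSigma_V^{-1}G$. On the singular event, no estimator can be guaranteed anything. Using the convention $Z=+\infty$ there, the loss of any least-squares minimizer is stochastically dominated by $Z_{n,V}$, for every $\theta$. For a measurable minimizer, the natural choice is the minimum-norm one, $\widehat\theta=\hSigma_V^{+}\frac1n\sum_i\Phi_V(X_i)Y_i$. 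Consequently $\sup_\theta Q_{1-\delta,\theta}\le Q_{1-\delta}(Z_{n,V})$.

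The lower bound is the substantive direction. I would follow \cite{EHE24} and use a Bayesian argument with Gaussian priors $\theta\sim N(0,\lambda I_m)$, letting $\lambda\to\infty$. Conditional on $\mathbf X$, the posterior is Gaussian with covariance $(\lambda^{-1}I+n\hSigma_V/\sigma^2)^{-1}$. By Anderson's lemma, the quantile of the loss of any estimator is at least the quantile of the centered posterior, since that law is symmetric and unimodal. As $\lambda\to\infty$, the posterior law converges to that of $Z_{n,V}$, with mass escaping to infinity along $\ker\hSigma_V$ when $\hSigma_V$ is singular. Lower semicontinuity of quantiles under weak convergence then gives $\inf_{\widehat\theta}\sup_\theta Q_{1-\delta,\theta}\ge Q_{1-\delta}(Z_{n,V})$. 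This limiting step, especially on the singular event and at atoms of the quantile, is the delicate point, and it is exactly what \cite[Lemma~5]{EHE24} handles. I would cite that lemma rather than redo the argument.

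The final clause then follows directly from the formula. If $\Pr(\hSigma_V\text{ singular})\ge\delta$, then $\Pr(Z_{n,V}\le r)\le1-\Pr(\text{singular})\le1-\delta$ for every finite $r$, with strict inequality needed. The case of equality requires care: if the singular probability equals $\delta$ exactly, one must check that the quantile is still infinite. This holds because $\Pr(Z\le r)<1-\delta$ for every finite $r$, since $Z$ restricted to the invertible event is unbounded Gaussian-quadratic and its tail is positive. Conversely, if $\Pr(\text{singular})<\delta$, then $Z$ is finite with probability exceeding $1-\delta$, so some finite $r$ works.
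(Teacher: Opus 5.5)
Your proposal is correct and takes the same route as the paper, which also does not reprove the result but treats it as the specialization of \cite{EHE24} to isotropic features $\Sigma_V=I_m$; your direct checks of the least-squares upper bound and of the infinite-risk criterion, including the boundary case $\Pr(\hSigma_V\text{ singular})=\delta$, are sound. One small remark: the $\lambda\to\infty$ step you call delicate is in fact elementary, because $(\lambda^{-1}I_m+n\hSigma_V/\sigma^2)^{-1}$ increases in the Loewner order, so the posterior quadratic forms increase pointwise to $Z_{n,V}$ (almost surely to $+\infty$ on the singular event) and their $(1-\delta)$-quantiles converge to $Q_{1-\delta}(Z_{n,V})$.
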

Let $\lambda_1\le\cdots\le\lambda_m$ denote the eigenvalues of $\hSigma_V$. Conditionally on the sampled inputs, if $\widehat\Sigma_V$ is invertible,
\begin{equation}\label{eq:comparevar}
Z_{n,V}
\overset{d}{=}
\frac{\sigma^2}{n}\sum_{j=1}^m\frac{G_j^2}{\lambda_j}.
\end{equation}
Proving minimax sampling bounds therefore amounts to controlling the spectrum of the empirical covariance matrix.
To control these eigenvalues, we express them in terms of the energy of functions in $V$. Write $h=a^\top\Phi_V$. Orthonormality of the basis gives
\[
 \E[\Phi_V(X)\Phi_V(X)^\top]=I_m,
 \qquad \E[h^2]=\|a\|_2^2,
\]
whereas the sampled inputs give $\E_n[h^2]=a^\top\widehat\Sigma_Va$.
Thus the distance of the empirical covariance to the identity matrix measures how well the sample preserves energy in each model direction.

\medskip
\noindent\textbf{Minimax upper sampling bound.} In particular,
\[
 \lambda_1
 =\inf_{h\in V\setminus\{0\}}
       \frac{\E_n[h^2]}{\E[h^2]}~,
 \qquad
 \widehat\Sigma_V\succeq cI_m
 \ \Longleftrightarrow\
 \forall h\in V:~\E_n[h^2]\ge c\E[h^2].
\]
To prove an upper sampling bound, it therefore suffices to establish stable sampling (see \Cref{def:stab}). In particular, once $\Pp(\widehat\Sigma_V\succeq cI_m)$ is sufficiently large, OLS attains the parametric rate, with an error constant depending on $c$.

\medskip

\noindent\textbf{Minimax lower sampling bound.} 
Suppose that every $h \in V$ has at most a fraction $\rho$ of its energy outside a common set $E$.
Formally,
\begin{equation}\label{eq:outside-matrix}
 \forall h\in V:~\E[h^2\1_{E^c}]\le\rho\E[h^2]
 \quad\Longleftrightarrow\quad
 \E[\Phi_V(X)\Phi_V(X)^\top\1_{E^c}]
 \preceq\rho I_m.
\end{equation}
This suggests that it is likely that many empirical eigenvalues are small, precluding the parametric rate according to \Cref{thm:exact-minimax} and \eqref{eq:comparevar}.
This requires, for appropriate parameters $d,k,m$, the existence of a subspace $V$ and a small set $E$ on which all functions $h \in V$ jointly concentrate their energy. This ingredient is supplied by the achievability direction of our subspace uncertainty principle.


\subsection{Stable sampling and energy anticoncentration via clipping}

A unit polynomial can take very large values on a small fraction of inputs. Concentration estimates based on its maximum value or higher-order moments can therefore require too many samples. We instead control the bounded quantity $h^2\wedge B$, choosing $B$ as small as possible while preserving a fixed positive fraction of every polynomial's population energy. For $\eta\in(0,1)$, define
\[
 b_\eta(h)=\inf\{B>0:\E[h^2\wedge B]\ge\eta\},
 \qquad
 T_{d,k}(\eta)=
 \sup_{\substack{h\in\cP_{d,k}\\\E[h^2]=1}}b_\eta(h).
\]
Thus $T_{d,k}(\eta)$ is the smallest clipping level that preserves at least $\eta$ energy for every unit polynomial.

\paragraph{From clipping to a sampling guarantee.}
For $B\ge T_{d,k}(\eta)$ and $V\subseteq\cP_{d,k}$, orthonormality gives
\[
 \lambda_1(\widehat\Sigma_V)
 =\inf_{\substack{h\in V\\\E[h^2]=1}}\E_n[h^2]
 \ge\inf_{\substack{h\in V\\\E[h^2]=1}}\E_n[h^2\wedge B]
 \ge\eta-
 \sup_{\substack{h\in V\\\E[h^2]=1}}
 \left|\E_n[h^2\wedge B]-\E[h^2\wedge B]\right|.
\]
The last step subtracts the largest sampling deviation from the population lower bound. In \Cref{prop:clip-cov}, symmetrization, contraction, and Bousquet's inequality control these deviations, giving $\lambda_1(\widehat\Sigma_V)\ge c$ with probability at least $1-e^{-t}$ once 
\begin{equation}\label{eq:nupper}
n\ge C_{\eta,c}B(m+t),
\end{equation}
for fixed $0<c<\eta$. Thus a smaller clipping level yields a smaller sampling bound, while retaining a fixed energy fraction keeps the empirical eigenvalues bounded away from zero.

\paragraph{From moments to the clipping level.}
For any unit-norm polynomial and $B,s>0$, we use that
\[
 \E[h^2\wedge B] \ge 1-B^{-s}\E[|h|^{2+2s}].
\]
Moments just above two therefore control the energy lost to clipping. Starting from the sharp moment bound of
\cite[Corollary~1.4]{KS21}, \Cref{lem:cubic} gives
\(
 \log\E[|h|^{2+2s}]
 \le s\Lambda_{d,k}+C_0ks^3.
\)
The crucial feature is the absence of a quadratic error. Indeed, the clipping loss is at most $1-\eta$ whenever
\[
 \log B\ge \Lambda_{d,k}+C_0ks^2+
             \frac{\log(1/(1-\eta))}{s}.
\]
Balancing the last two terms gives $s\asymp k^{-1/3}$ and hence
\begin{equation}\label{eq:clipupper}
\log T_{d,k}(\eta)\le \Lambda_{d,k}+O_{\eta,q_0}(k^{1/3}),
\end{equation}
as proved in \Cref{prop:Tupper}.  We may therefore choose $B=\exp\{\Lambda_{d,k}+C_{\eta,q_0}k^{1/3}\}$ in \eqref{eq:nupper}, obtaining the sampling bound
\[
 n\ge C_{\eta,c}(m+t)
       \exp\{\Lambda_{d,k}+C_{\eta,q_0}k^{1/3}\}.
\]
This establishes the upper bound in \Cref{thm:main}.


The clipping upper bound \eqref{eq:clipupper} also shows that no unit polynomial can concentrate most of its energy on an excessively small set.
Suppose $\E[h^2\1_{E^c}]\le\rho$ and choose $\eta=(1+\rho)/2>\rho$. By \Cref{prop:Tupper}, clipping at $B=\exp\{\Lambda_{d,k}+C_{\eta,q_0}k^{1/3}\}$ preserves at least $\eta$ energy. But
\[
 \eta\le\E[h^2\wedge B]
 \le\E[h^2\1_{E^c}]+\E[B\1_{E}]
 \le\rho+B\mu_d(E).
\]
Consequently,
\[
 \mu_d(E)\ge\frac{\eta-\rho}{B}
 \ge\exp\{-\Lambda_{d,k}-C_{\rho,q_0}k^{1/3}\},
\]
after adjusting the constant. Intuitively, clipping prevents arbitrarily large values on $E$ from compensating for its small probability. This proves the impossibility direction of the uncertainty principle in \Cref{thm:common}.

\subsection{Achievability of joint energy concentration in many directions}

We first construct one polynomial concentrated on a small
Hamming ball $E_u=\{S_D\ge u\}$, where $S_D=x_1+\cdots+x_D$.
This choice serves two purposes: we can conveniently control the probability of such a ball via binomial tail bounds, and its permutation symmetry allows us to extend spatial concentration to many directions.

\par\smallskip
\noindent\textbf{One concentrated radial polynomial.}
Since $E_u$ depends only on $S_D$, we seek a \emph{radial} polynomial, meaning one that also depends only on $S_D$. The Krawtchouk polynomials form an orthonormal basis for these functions, with one basis element at each degree. Expanding in this basis lets us enforce the degree and unit-norm constraints through the coefficients.

The key is to place the nonzero coefficients near the highest permitted degree and taper them smoothly to zero. Multiplication by $S_D$ couples neighboring Krawtchouk coefficients. A smooth taper makes the resulting polynomial $p$ an approximate eigenfunction of this multiplication operator:
\[
 \|(S_D-\tau)p\|_2\ \text{is small},
 \qquad \tau=2\sqrt{\ell(D-\ell)}.
\]
Equivalently, the energy distribution $p^2\,d\mu_D$ places $S_D$ near $\tau$. Choosing a cutoff slightly below $\tau$ therefore captures most of the energy, while a binomial tail bound controls the ball's probability.

For fixed $\rho\in(0,1)$ and $q_0<1/2$,
\Cref{lem:window,lem:scalar-witness} make this precise: for sufficiently large $\ell\le q_0D$, they provide a unit polynomial of degree at most $\ell$ and a cutoff $u$ such that
\begin{equation*}
 \E[p^2\1_{\{S_D<u\}}]\le\rho,
 \qquad
 \mu_D(S_D\ge u)
 \le\exp\{-D\Psi(\ell/D)+C_{\rho,q_0}\ell^{1/3}\}.
\end{equation*}

\par\smallskip
\noindent\textbf{From one polynomial to many directions.}
A radial polynomial is constant on each slice $\{S_d=s\}$. To obtain additional directions, we multiply a radial factor by polynomials that vary within these slices:
\[
 W_r=\{f\,p_r(S_d):f\in\cH_{d,r}\},
 \qquad \deg p_r\le k-r,
\]
where $\cH_{d,r}$ is the space of degree-$r$ harmonic polynomials, defined in \Cref{sec:common}. Every resulting function has degree at most $k$.

Harmonic polynomials are useful because all unit functions of a fixed harmonic degree distribute their energy identically across slices. Thus a shared radial factor changes their energy distribution in the same way.
Specifically, \Cref{lem:harmonic-radial} (a specialization of \cite[Theorem~3.10]{FM19}) reduces this calculation to the coordinate sum on a cube of dimension $d-2r$.
Choosing $p_r$ from the one-dimensional construction on that cube makes multiplication by $p_r(S_d)$ preserve norms and transfers concentration to every function in $W_r$. We call this the \emph{harmonic lift}.

Crucially, different harmonic degrees remain orthogonal both on the whole cube and outside any common ball $E$. Hence, for $h=\sum_{r=0}^R h_r$ with $h_r\in W_r$,
\[
 \E[h^2\1_{E^c}]
 =\sum_{r=0}^R\E[h_r^2\1_{E^c}]
 \le\rho\sum_{r=0}^R\E[h_r^2]
 =\rho\E[h^2],
\]
provided the same cutoff ensures $\E[h_r^2\1_{E^c}]\le\rho\E[h_r^2]$ for every $r$ and every $h_r\in W_r$. The concentration bounds therefore combine without increasing the leakage.

\par\smallskip
\noindent\textbf{Balancing dimension and concentration.}
Combining harmonic degrees through $R$ gives $\dim W_{\le R}=\binom dR$ \cite{FM19}, but leaves less degree for the radial factors. In \Cref{prop:harmonic-lift}, we choose a common cutoff that preserves concentration in every harmonic degree while keeping the ball sufficiently small, giving
\[
 \mu_d(E)\le
 \exp\{-\Lambda_{d,k}+C_{\rho,q_0}(R+k^{1/3})\},
 \qquad
 \E[h^2\1_{E^c}]\le\rho\E[h^2]
 \quad(h\in W_{\le R}).
\]
Finally, taking $R=\lfloor k^{1/3}\rfloor$ yields a subspace of dimension $\binom d{\lfloor k^{1/3}\rfloor}$ while keeping the error in the exponent of order $k^{1/3}$.
This proves the concentration construction in \Cref{thm:common}.

\subsection{Sampling lower bounds via small eigenvalues}

Write $p=\mu_d(E)$ for the probability of the concentration set constructed above. When $m\le\binom d{\lfloor k^{1/3}\rfloor}$, choose an $m$-dimensional subspace $V$ of the concentrating space.
By \eqref{eq:outside-matrix}, its shared concentration bounds the population covariance contribution from $E^c$.

To analyze the empirical covariance, we use the decomposition
\[
 \widehat\Sigma_V=\widehat\Sigma_E+\widehat\Sigma_{E^c}~, \qquad\hSigma_A=\frac1n\sum_{i=1}^n
 \Phi(X_i)\Phi(X_i)^\top\1_{\{X_i\in A\}},\quad A\in\{E,E^c\}.
\]
There are two reasons why this matrix has many small eigenvalues when $np$ is small relative to $m$. First, few observations fall inside $E$. Each contributes a matrix of rank at most one, so $\widehat\Sigma_E$ vanishes on a subspace of dimension close to $m$. Second, the population bound gives $\E[\Tr\widehat\Sigma_{E^c}]\le m\rho$.
Markov's inequality therefore bounds the total outside energy by $O(m\rho)$ with constant high probability.
On the large subspace where the contribution from observations inside $E$ vanishes, this leaves average empirical energy $O(\rho)$ per direction. A constant fraction of the eigenvalues must consequently be $O(\rho)$. \Cref{lem:rare-cov} formalizes this argument using the min--max principle.

These eigenvalues force large minimax error. Conditionally on a nonsingular design with
$\lambda_1,\ldots,\lambda_r\le\lambda$,
\[
 Z_{n,V}\overset{d}{=}
 \frac{\sigma^2}{n}\sum_{j=1}^m\frac{G_j^2}{\lambda_j}
 \ge\frac{\sigma^2}{n\lambda}\sum_{j=1}^rG_j^2
 \overset{d}{=}\frac{\sigma^2}{n\lambda}\chi_r^2.
\]
For $r$ proportional to $m$ and $\lambda=O(\rho)$, this yields error of order $\sigma^2m/(n\rho)$ with constant probability. When $\log(1/\delta)<m$, choosing $\rho$ sufficiently small makes this exceed the target parametric rate.

In the case where $\log(1/\delta)\ge m$, already one concentrated direction suffices since we can choose any $m$-dimensional subspace of $\cP_{d,k}$ containing that direction. 
The probability of missing $E$, together with a Gaussian tail in that direction, makes the failure probability exceed $e^{-t}$. \Cref{prop:rare-par} combines both regimes with \Cref{thm:exact-minimax} to rule out the parametric guarantee for $n\le c(m+t)/p$.
Substituting the bound on $p$ gives the lower threshold $(m+t)\exp\{\Lambda_{d,k}-O(k^{1/3})\}$ when $m\le\binom d{\lfloor k^{1/3}\rfloor}$ or $t\ge m$, proving the lower bound in \Cref{thm:main}.

\subsection{Sharpness of the cube-root remainder}

To prove \Cref{prop:airy-sharp}, we construct a subspace whose functions retain a fixed positive fraction of their energy on a common set of probability at most $\exp\{-\Lambda_{d,k}-c k^{1/3}\}$. This rules out replacing the uniform $O(k^{1/3})$ uncertainty remainder by $o(k^{1/3})$.

We first construct a polynomial under the standard Gaussian measure $\gamma$. Let $H_0,\ldots,H_K$ be the orthonormal Hermite polynomials, put $N=K+1$, and choose $y_N=2\sqrt N+3N^{-1/6}$. Define
\[
 q_K(x)=
 \frac{\sum_{j=0}^K H_j(y_N)H_j(x)}
      {\bigl(\sum_{j=0}^K H_j(y_N)^2\bigr)^{1/2}}.
\]
This is a unit-norm polynomial of degree at most $K$. Its coefficients are chosen to maximize its value at $y_N$ among all such unit polynomials. To show that this large value produces substantial energy over an interval, rather than just at one point, we use the Hermite kernel's Airy limit (see \eqref{eq:airy-limit}). The Airy function is a classical function that describes the behavior of these polynomials at points $x=2\sqrt N+sN^{-1/6}$, where $s$ stays in a fixed bounded interval as $N\to\infty$.
The limit gives
\[
 \int_{2\sqrt N+3N^{-1/6}}^{2\sqrt N+4N^{-1/6}}
 q_K(x)^2\,d\gamma(x)\longrightarrow\alpha>0
 \qquad(K\to\infty),
\]
for some constant $\alpha>0$, as shown in \eqref{eq:airy-mass}.
Thus a fixed positive fraction of energy remains in this shrinking interval far into the Gaussian tail.

To obtain a large subspace of polynomials on the cube, set
$R=\lfloor k^{1/3}\rfloor$ and $K=k-R$.
Similarly to our earlier subspace construction on the Boolean cube, we multiply the radial polynomial by harmonic polynomials to form a subspace
\(
 W=\bigoplus_{r=0}^R
 \{f\,q_K(S_d/\sqrt d):f\in\cH_{d,r}\}.
\)

As before, this ensures that every product has degree at most $K+r\le k$.
For fixed $k$, consider the cube polynomials $q_K(S_d/\sqrt d)$. As $d\to\infty$, by the central limit theorem and moment convergence, their total energy and their energy above a cutoff \(a_k\), fixed independently of \(d\), converge to the corresponding Gaussian quantities. Since the Gaussian polynomial has a fixed positive energy fraction above $a_k$, the cube polynomials do as well for sufficiently large $d$.
The radial-weight identity (\Cref{lem:harmonic-radial}) then gives $\dim W=\binom dR$ and transfers concentration to every function in $W$, including across different harmonic degrees.

The Gaussian interval above lies beyond $a_k=2\sqrt k+k^{-1/6}$ for sufficiently large $k$. Consequently, the shared concentration set can be chosen as $E=\{S_d/\sqrt d\ge a_k\}$, and a Chernoff bound gives
\[
 \mu_d(E)\le e^{-a_k^2/2}
 =\exp\{-2k-2k^{1/3}-\tfrac12k^{-1/3}\}
 \le\exp\{-\Lambda_{d,k}-2k^{1/3}\}.
\]
The $k^{1/3}$ correction comes from the cross term between the leading cutoff $2\sqrt k$ and its $k^{-1/6}$ displacement.
This proves sharpness at one fixed leakage level along sufficiently large dimensions.

\subsection{Limitations and open questions}\label{sec:consequences}
\noindent\textbf{Larger model dimensions.} At fixed confidence, determining the threshold to $O(k^{1/3})$ precision beyond $m\le\Mhard$ remains open.
Our bounds still identify the leading exponent when $k/d\to q\in(0,1/2)$ and $\log m=o(d)$ (see Appendix~\ref{app:modeldimscaling}).
When $\log m$ is comparable to $d$, how does the model dimension affect the leading exponent of the sampling threshold? Here, our lower-bound construction faces a limitation: concentrating an entire $m$-dimensional subspace on one set $E$ requires $\mu_d(E)\ge(1-\rho)m/\dim\cP_{d,k}$. Thus larger subspaces may require larger concentration sets, weakening the sampling lower bound obtained by this approach.

\medskip
\noindent\textbf{Sharpness and uniformity.} Our sharpness result concerns one fixed leakage level and sufficiently large ambient dimensions.
Optimality of the uncertainty remainder for every prescribed leakage, optimality of the $O(k^{1/3})$ remainder in the regression threshold, and uniform estimates as $k/d$ approaches $1/2$ from below remain open.

\medskip
\noindent\textbf{Geometry of individual subspaces.} Another question is which geometric properties of a given subspace $V$ determine its sample threshold. Such criteria should distinguish models requiring exponentially many samples in $k$ from spans of Walsh characters, which attain the parametric rate after $O(m(\log m+t))$ samples; see \Cref{app:cap}.

\section{From moment bounds to stable sampling}\label{sec:clipping}

\subsection{The Kirshner--Samorodnitsky moment bound}
The following sharp moment bound quantifies how the degree and ambient dimension limit the contribution of large polynomial values to the total energy.

First, for $p\ge2$ and $0<q<1/2$, let $z\in(0,1)$ be the unique solution of
\begin{equation*}\label{eq:KS-stationary-z}
 q=
 \frac{z\bigl((1+z)^{p-1}-(1-z)^{p-1}\bigr)}
 {(1+z)^p+(1-z)^p}
\end{equation*}
and define
\begin{equation}\label{eq:KS-rate}
 F(p,q)=\log\frac{(1+z)^p+(1-z)^p}{2}-pq\log z-\frac p2\HH(q).
\end{equation}
At the endpoints, set $F(p,0)=0$ and $F(p,1/2)=(p/2-1)\log2$, the respective limits of the formula.

The moment bound stated below is \cite[Corollary 1.4]{KS21}.
\begin{theorem}[Kirshner--Samorodnitsky~\cite{KS21}]\label[theorem]{thm:KS}
For $p\ge2$, $0\le k\le d/2$, and nonzero $f\in\cP_{d,k}$,
\begin{equation*}
 \frac{\E[|f|^p]}{(\E[f^2])^{p/2}}
 \le \exp\bigl(dF(p,k/d)\bigr),
\end{equation*}
with $F$ defined in \eqref{eq:KS-rate}.
\end{theorem}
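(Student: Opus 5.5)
Since this is \cite[Corollary~1.4]{KS21}, the paper simply imports it. If I had to reprove it, I would use noise-operator inversion combined with a sharp, distance-resolved bound on the noise kernel. The form of $F$ suggests the structure. The term $-pq\log z$ is the cost $z^{-k}$ of inverting the noise operator on degree-$\le k$ functions, raised to the power $p$. The term $\log\frac{(1+z)^p+(1-z)^p}{2}$ is the single-coordinate $p$-th moment of the kernel factor $1+zx_iy_i$. The term $-\frac p2\HH(q)$ should come from restricting to functions of degree $k$ rather than arbitrary functions.

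\emph{Step 1 (inversion).} Let $T_z$ denote the noise operator. For $f\in\cP_{d,k}$, write $f=T_zg$ with $g=T_{1/z}f$. Then $\|g\|_2^2=\sum_S z^{-2|S|}\hat f(S)^2$.

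\emph{Step 2 (kernel bound).} Write $T_zg(x)=\E_y[g(y)K_z(x,y)]$ with $K_z(x,y)=\prod_i(1+zx_iy_i)$. Then bound $\E_x|T_zg(x)|^p$ by Hölder in $y$, one coordinate at a time. This is where $\bigl(\tfrac{(1+z)^p+(1-z)^p}{2}\bigr)^d$ appears. Applying this to all of $g$ naively loses too much. The crude bound $\|g\|_2\le z^{-k}\|f\|_2$ gives $\exp\{d(\log\frac{(1+z)^p+(1-z)^p}{2}-pq\log z)\}$ with no entropy gain.

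\emph{Step 3 (entropy gain).} To recover the factor $e^{-\frac p2 d\HH(q)}$, I would exploit that the Fourier mass of $g$ is spread over $\binom{d}{\le k}\approx e^{d\HH(q)}$ characters. The plan is to decompose $g$ into its level-$j$ parts, $j\le k$. For each level, compare $\E|T_zg_j|^p$ against the extremal case of a radial (Krawtchouk) function, using that level-$j$ projections of the kernel are Krawtchouk polynomials in the Hamming distance. Then sum over the $k+1$ levels, which costs only a subexponential factor.

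\emph{Step 4 (optimization).} Finally, minimize over $z\in(0,1)$. Setting the derivative of the exponent to zero gives exactly the stationarity equation defining $z$ from $q$. Convexity in $\log z$ gives uniqueness. The endpoint values of $F$ follow by continuity.

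The main obstacle is Step 3: obtaining the full $\frac p2\HH(q)$ gain uniformly for non-radial $f$. I expect this to require the genuinely new ingredient of \cite{KS21}. That ingredient is an extremality statement showing that, for each level, the worst case among functions with given $\ell_2$ mass is the symmetrized (Hamming-sphere/Krawtchouk) one. I would first try to prove it by symmetrization over permutations together with a convexity argument for $\|\cdot\|_p^p$. Failing that, I would cite the result, as the paper does.
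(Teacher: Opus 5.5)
The paper does not prove this theorem. It imports it verbatim as \cite[Corollary~1.4]{KS21}, so your fallback of citing it is exactly the paper's treatment, and it is the only complete argument in your proposal. The self-contained route you sketch has a genuine gap, and it sits in Step~3, which is the entire content of the theorem.

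Your proposed way of closing Step~3 would fail. Averaging a level-$j$ polynomial over coordinate permutations returns its radial part, which is a multiple of $\sum_{|S|=j}\chi_S$ and may vanish (take $\chi_S-\chi_T$ with $|S|=|T|=j$). Moreover, Minkowski/convexity gives $\|\E_\sigma[f\circ\sigma]\|_p\le\|f\|_p$, so averaging can only lower the numerator of $\|f\|_p/\|f\|_2$. It therefore cannot certify that the Krawtchouk polynomial maximizes this ratio. That extremality is exactly what \cite{KS21} had to establish by a different mechanism.

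Relatedly, the factor $e^{-\frac p2 d\HH(q)}$ must be produced for every $f\in\cP_{d,k}$, including a single Walsh character, so it cannot come from the Fourier mass of $f$ being ``spread''. In the extremal case it is simply the normalization $\|\sum_{|S|=k}\chi_S\|_2^2=\binom dk\approx e^{d\HH(q)}$.

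Even granting a per-level extremality bound, recombining the $k+1$ levels by the triangle inequality and Cauchy--Schwarz costs a factor $(k+1)^{p/2}$. This factor does not vanish at $p=2$, where the true ratio is exactly $1$ and $F(2,q)=0$. So you would not obtain the stated inequality, and in this paper the loss is not cosmetic. In the clipping argument of \Cref{prop:Tupper}, the logarithm of such a prefactor is divided by $s$. Rebalancing $C_0ks^2+\log(k+1)/s$ then only gives a remainder of order $k^{1/3}(\log k)^{2/3}$ instead of $k^{1/3}$. That would destroy the matching bounds in \Cref{thm:main} and \Cref{prop:airy-sharp}.

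Two smaller remarks. First, your Step~2 bound $z^{-pk}\bigl(\tfrac{(1+z)^p+(1-z)^p}{2}\bigr)^d$ is, for every $z$, weaker than what Cauchy--Schwarz in $y$ already gives, namely $z^{-pk}(1+z^2)^{pd/2}$. Optimized over $z$, that bound is just the sup-norm estimate $\E|f|^p\le e^{\frac p2 d\HH(q)}\|f\|_2^p$, so Step~2 is not where the structure of $F$ comes from. Second, Step~4 is fine: convexity of the exponent in $\log z$ is equivalent to the monotonicity of $q_p(v)$ established in the proof of \Cref{lem:cubic}.
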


\subsection{Cancellation at the second moment}
To make the complicated implicit rate \(F(p,q)\) easier to use, we write \(p=2+2s\) and study small \(s\ge0\), so that the moment bound controls deviations from the normalized second moment.
The next lemma identifies $\Psi(q)$ as the linear term near $p=2$, with no quadratic error. 
\begin{lemma}[Cancellation at the second moment term]\label[lemma]{lem:cubic}
For every $q_0<1/2$, there is $C_0<\infty$ such that, for
$0<q\le q_0$ and $0\le s\le1/2$,
\begin{equation}\label{eq:cubic-rate}
 F(2+2s,q)\le s\Psi(q)+C_0qs^3.
\end{equation}
Consequently, if $\E[h^2]=1$ and $\deg h\le k\le q_0d$, then
\begin{equation}\label{eq:cubic-moment}
 \log \E[|h|^{2+2s}]\le s\Lambda_{d,k}+C_0ks^3.
\end{equation}
\end{lemma}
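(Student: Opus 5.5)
The plan is to treat the implicit rate $F(p,q)$ as the value of a one-dimensional optimization in $z$ and to Taylor-expand it in $s$ around $p=2$. Define
\[
 G(p,z)=\log\frac{(1+z)^p+(1-z)^p}{2}-pq\log z-\frac p2\HH(q),\qquad z\in(0,1).
\]
Differentiating in $z$ shows that the defining equation for $z$ is exactly $\partial_zG(p,z)=0$. I will first check that $G(p,\cdot)$ is convex on the relevant range, so that $F(p,q)=\min_zG(p,z)$. The term $-pq\log z$ is convex, and at $p=2$ the first term $\log(1+z^2)$ is convex for $z<1$. If this holds, then $F(2+2s,q)\le G(2+2s,z)$ for any admissible $z$, including a $z$ depending on $s$. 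This gives an upper bound without having to control the implicit root.

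\textbf{Value and first derivative at $s=0$.} At $p=2$ the stationarity equation reads $q=z^2/(1+z^2)$, so $z_0^2=q/(1-q)$. Substituting gives $F(2,q)=-\log(1-q)-q\log\frac q{1-q}-\HH(q)=0$. By the envelope theorem, $\partial_pF=\partial_pG(p,z^*(p))$, and
\[
 \partial_pG=\frac{(1+z)^p\log(1+z)+(1-z)^p\log(1-z)}{(1+z)^p+(1-z)^p}-q\log z-\tfrac12\HH(q).
\]
At $p=2$, $z=z_0$, the weights are $(1\pm z_0)^2/(2(1+z_0^2))=\tfrac12\pm\sqrt{q(1-q)}$. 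These are the $1-u$ and $u$ of $\Psi$, with $u=\tfrac12-\sqrt{q(1-q)}$. A direct computation should then give $\partial_pG=\tfrac12(\log2-\HH(u))$, and hence $\frac{d}{ds}F(2+2s,q)|_{s=0}=\Psi(q)$.

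\textbf{Quadratic term, the main obstacle.} The heart of the lemma is that the $s^2$ coefficient vanishes. Along the optimal curve we have
\[
\frac{d^2}{dp^2}F=\partial_{pp}G-\frac{(\partial_{pz}G)^2}{\partial_{zz}G}.
\]
Here $\partial_{pp}G$ is the variance of $\log(1\pm z)$ under the two-point weights above, and I expect an exact identity at $p=2$ showing that this Schur-complement expression is $0$ (or at least $\le0$, which suffices for an upper bound). I would verify this by explicit algebra in the variables $z_0$ and $w=\sqrt{q(1-q)}$. If exact cancellation fails, the fallback is to plug in the trial point $z=z_0+sz_1$ with $z_1$ chosen to kill the $s^2$ term of $G(2+2s,z)$. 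Since $F\le G$, this would still yield the inequality, provided the resulting quadratic coefficient is nonpositive.

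\textbf{Cubic remainder uniform in $q$.} The remainder is then controlled by $\sup|\partial_s^3|$ of $G(2+2s,z(s))$ over $s\in[0,1/2]$. For $q\le q_0<1/2$ we have $z\le z_0(q_0)<1$, so all derivatives stay bounded. To get the factor $q$, I would use the scaling $z\asymp\sqrt q$ as $q\to0$. Every term is then either a function of $z^2$ vanishing at $0$, or has the form $q\log z$ multiplied by a polynomial in $s$ whose third derivative in $s$ vanishes. Hence the third derivative is $O(q)$ uniformly, which gives \eqref{eq:cubic-rate}.

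\textbf{Consequence \eqref{eq:cubic-moment}.} Apply \Cref{thm:KS} with $p=2+2s$ and $q=k/d\le q_0<1/2$, using $\E[h^2]=1$. This gives $\log\E[|h|^{2+2s}]\le dF(2+2s,k/d)\le sd\Psi(k/d)+C_0ds^3(k/d)=s\Lambda_{d,k}+C_0ks^3$.
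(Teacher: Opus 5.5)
There is a genuine gap at the step you call the heart of the lemma. The vanishing of the $s^2$ coefficient is only ``expected'', and your fallback does not hedge against it failing. Since $\partial_zG=0$ at $(2,z_0)$, the $s^2$ coefficient of $G(2+2s,z_0+sz_1)$ is $\frac12\bigl(4\partial_{pp}G+4\partial_{pz}G\,z_1+\partial_{zz}G\,z_1^2\bigr)$. Its minimum over $z_1$ is $2\bigl(\partial_{pp}G-(\partial_{pz}G)^2/\partial_{zz}G\bigr)$, which is the same Schur complement. If that were positive, no trial line would help, so the identity must actually be checked.

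It does hold. At $p=2$, $z=z_0$, put $a=\artanh z_0$ and use $1+z_0^2=1/(1-q)$ and $(1-z_0^2)/(1+z_0^2)=1-2q$. Then
\[
 \partial_{pp}G=(1-2q)^2a^2,\qquad
 \partial_{pz}G=\frac{2(1-z_0^2)a}{(1+z_0^2)^2},\qquad
 \partial_{zz}G=\frac{4}{(1+z_0^2)^2}.
\]
Hence the Schur complement is $a^2\bigl[(1-2q)^2-\bigl((1-z_0^2)/(1+z_0^2)\bigr)^2\bigr]=0$, attained at the slope $z_1=-(1-z_0^2)a$.

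Two supporting steps also need repair. First, $G(p,\cdot)$ is not convex on $(0,1)$ in general. At $p=3$ the first term is $\log(1+3z^2)$, whose second derivative tends to $-3/4$ as $z\uparrow1$, while $pq/z^2\to3q<3/4$ when $q<1/4$. The fact you need, $F(p,q)=\min_zG(p,z)$, follows instead from uniqueness of the root. Indeed, $z\,\partial_zG/p$ equals the right side of the defining equation minus $q$. It tends to $-q<0$ as $z\downarrow0$ and equals $1/2-q>0$ at $z=1$, so it changes sign exactly once, from negative to positive.

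Second, your $O(q)$ bound on the third derivative treats $q\log z$ as if $z$ did not depend on $s$. Along the optimal root you would be back to controlling derivatives of the implicit root uniformly in $q$, which you wanted to avoid. Commit instead to the line $z(s)=z_0-s(1-z_0^2)a$.
\begin{itemize}
\item Since $(1-z_0^2)\artanh z_0\le z_0$, you get $z(s)\ge z_0/2$ on $[0,1/2]$.
\item Hence the $s$-derivatives of $(2+2s)q\log z(s)$ are $q$ times bounded polynomials in $z_1/z(s)\in[-2,0]$.
\item The first term is a smooth function of $(p,z^2)$ vanishing at $z=0$, evaluated along a curve with $z^2$, $zz_1$, $z_1^2=O(q)$.
\end{itemize}
So the third derivative is $O_{q_0}(q)$. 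The first derivative is still $\Psi(q)$ because $\partial_zG=0$ at $s=0$.

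With these repairs your route is complete. It is essentially a linearization of the paper's argument. With $z=\tanh v$ and $g_q(v)=\log\cosh v+q\log\tanh v+\tfrac12\HH(q)$, the paper takes $v=a/(1+s)$, so $pv=2a$ stays fixed. The tangent of $z=\tanh(a/(1+s))$ at $s=0$ is exactly your optimal line. Fixing $pv$ freezes the $\log\cosh(pv)$ term, so the second-order cancellation reduces to the single identity $g_q''(a)=0$. The remainder then reduces to bounds on $v^2g_q''$ and $v^3g_q'''$, avoiding the three-entry Schur-complement computation.
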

The proof combines a hyperbolic parametrization with a carefully chosen variational comparison. A Taylor expansion then reveals the cancellation of the quadratic term and yields a uniform cubic remainder. See Appendix~\ref{app:moment-details} for the details.

\subsection{From moments to clipping}
The remaining step is a standard moment-to-tail argument, which exploits that a moment slightly above two bounds the energy removed by truncation. The resulting clipping upper bound matches that of the lower bound witness in \Cref{lem:scalar-witness}, including a remainder of the same order $k^{1/3}$ in the exponent, thereby establishing the following result.
 \begin{proposition}[One-dimensional clipping]\label[proposition]{prop:clipping}\label[proposition]{prop:Tupper}
 Fix $q_0\in(0,1/2)$ and $\eta\in(0,1)$. For integers $1\le k\le q_0d$,
 \[
  |\log T_{d,k}(\eta)-\Lambda_{d,k}|\le C_{\eta,q_0}k^{1/3}.
 \]
 \end{proposition}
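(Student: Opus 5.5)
The proof splits into the two directions of the two-sided bound $|\log T_{d,k}(\eta)-\Lambda_{d,k}|\le C_{\eta,q_0}k^{1/3}$.

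\emph{Upper bound.} I will derive this from \Cref{lem:cubic}. Fix a unit-norm $h\in\cP_{d,k}$ and numbers $B>0$, $s\in(0,1/2]$. On the event $\{h^2>B\}$ we have $h^2-B\le h^2\le h^{2+2s}B^{-s}$. Hence
\[
 \E[h^2\wedge B]\ge 1-B^{-s}\E[|h|^{2+2s}]\ge 1-\exp\{-s\log B+s\Lambda_{d,k}+C_0ks^3\}.
\]
The right-hand side is at least $\eta$ as soon as
\[
 \log B\ge \Lambda_{d,k}+C_0ks^2+s^{-1}\log\frac1{1-\eta}.
\]
I will take $s=\min\{1/2,k^{-1/3}\}$, which makes the last two terms $O_{\eta,q_0}(k^{1/3})$. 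For bounded $k$ the choice $s=1/2$ only changes constants. Since this $B$ works uniformly over all unit $h$, we get $\log T_{d,k}(\eta)\le\Lambda_{d,k}+C_{\eta,q_0}k^{1/3}$.

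\emph{Lower bound.} I need one unit polynomial $h\in\cP_{d,k}$ with $b_\eta(h)\ge\exp\{\Lambda_{d,k}-Ck^{1/3}\}$. I will use the scalar witness of \Cref{lem:scalar-witness} with $D=d$, $\ell=k$ and leakage $\rho=(1-\eta)/2$. This gives a radial unit $p$ and a cutoff $u$ such that $\E[p^2\1_{\{S_d<u\}}]\le\rho$ and $\mu_d(E)\le\exp\{-\Lambda_{d,k}+Ck^{1/3}\}$, where $E=\{S_d\ge u\}$. For any $B$,
\[
 \E[p^2\wedge B]\le \E[p^2\1_{E^c}]+B\mu_d(E)\le\rho+B\mu_d(E).
\]
Therefore $\E[p^2\wedge B]\ge\eta$ forces
\[
 B\ge(\eta-\rho)/\mu_d(E)=\tfrac{1+\eta}{2}\cdot\tfrac{1}{\mu_d(E)}\cdot\tfrac{\eta-\rho}{(1+\eta)/2},
\]
which is at least a positive multiple of $\exp\{\Lambda_{d,k}-Ck^{1/3}\}$ since $\eta-\rho\ge\eta/2>0$. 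Taking the supremum over $h$ gives $\log T_{d,k}(\eta)\ge\Lambda_{d,k}-C_{\eta,q_0}k^{1/3}$.

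\emph{Remaining issue.} \Cref{lem:scalar-witness} is stated only for sufficiently large $\ell$, say $\ell\ge k_0(\rho,q_0)$. For $k<k_0$ I will argue trivially: $T_{d,k}(\eta)\ge\eta$, because $h\equiv1$ requires $B\ge\eta$. Also $\Lambda_{d,k}\le 2k\le 2k_0$, since $\Psi(q)\le 2q$ (which I would check, or take from the classical hypercontractive bound mentioned earlier). So for these $k$ the bound holds after enlarging the constant.

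Both directions are short given the cited lemmas, so I expect no step to be a real obstacle. The substantive inputs are \Cref{lem:cubic} (the absence of a quadratic term, which is what makes the $k^{1/3}$ balance possible) and the witness construction.
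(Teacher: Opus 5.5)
Your upper bound and your small-$k$ fallback (the constant polynomial giving $T_{d,k}(\eta)\ge\eta$, together with $\Lambda_{d,k}\le2k$) follow the paper's argument. Your choice $s=\min\{1/2,k^{-1/3}\}$ is cruder than the paper's optimized $s$ but gives the same order.

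The lower bound, however, fails as written because of the leakage you feed into \Cref{lem:scalar-witness}. With $\rho=(1-\eta)/2$ you have
\[
 \eta-\rho=\frac{3\eta-1}{2},
\]
so your claim $\eta-\rho\ge\eta/2$ holds only for $\eta\ge1/2$. For $1/3<\eta<1/2$ the argument still survives, with the worse constant $(3\eta-1)/2$. For $\eta\le1/3$, however, the difference is nonpositive. The implication ``$\E[p^2\wedge B]\ge\eta$ forces $B\ge(\eta-\rho)/\mu_d(E)$'' then says nothing, and you get no lower bound on $T_{d,k}(\eta)$ at all. Since the proposition is asserted for every fixed $\eta\in(0,1)$, this is a real hole, not just a constant.

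The condition that matters is that the energy outside $E$ be strictly smaller than the retained fraction, i.e.\ $\rho<\eta$. Only then can $\E[p^2\wedge B]\le\rho+B\mu_d(E)$ force $B$ to be of order $1/\mu_d(E)$. The fact that the energy inside $E$, at least $1-\rho=(1+\eta)/2$, exceeds $\eta$ is irrelevant. Since \Cref{lem:scalar-witness} accepts any fixed leakage in $(0,1)$, take $\rho=\eta/2$, as the paper does. Then every $B<\eta/(2\mu_d(E))$ retains less than $\eta$ energy, so
\[
 T_{d,k}(\eta)\ge b_\eta(p)\ge\frac\eta2\exp\{\Lambda_{d,k}-C_{\eta,q_0}k^{1/3}\}.
\]
The rest of your proof goes through unchanged, with the witness threshold $k_0$ now depending on $\eta/2$ and $q_0$.
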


\begin{proof}
\emph{Upper bound.}
\emph{Step 1: bounding the energy removed by clipping.}
Let $\E[h^2]=1$, $L_\eta=\log(1/(1-\eta))$, and $0<s\le1/2$. For $z\ge0$,
\[
 (z-B)_+
\le z\,\mathbf1_{\{z>B\}}
\le z\left(\frac zB\right)^s
= z^{1+s}B^{-s}.
\]
Using \Cref{lem:cubic},
\(
 \E[(h^2-B)_+]
 \le B^{-s}\E[|h|^{2+2s}]
 \le\exp\bigl(-s(\log B-\Lambda_{d,k})+C_0ks^3\bigr).
\)

\emph{Step 2: balancing the two error terms.}
To make the last exponent at most $-L_\eta$, it suffices to take $\log B-\Lambda_{d,k}=C_0ks^2+L_\eta/s$. Balancing these terms suggests $s$ of order $k^{-1/3}$. Choose
\begin{equation}\label{eq:schoice}
 s=\frac12\wedge
        \left(\frac{L_\eta}{2C_0k}\right)^{1/3},
 \qquad
 \log B=\Lambda_{d,k}+C_0ks^2+\frac{L_\eta}{s}.
\end{equation}
Then $\E[(h^2-B)_+]\le1-\eta$, so $\E[h^2\wedge B]\ge\eta$. If the second value in the minimum in \eqref{eq:schoice} is selected, the two error terms are each $O_{q_0,\eta}(k^{1/3})$. Otherwise, $k$ is bounded in terms of $q_0,\eta$, and the same conclusion holds after increasing the constant. Taking the supremum proves the claim.

\medskip
\noindent\emph{Lower bound.}
Fix the retained fraction $\eta$. For large $k$, apply \Cref{lem:scalar-witness} with $D=d$, $\ell=k$, and leakage $\rho=\eta/2$. Let $E=\{S_d\ge u\}$ be its small ball. Outside $E$, the polynomial has at most $\eta/2$ energy; inside $E$, clipping at $B$ leaves at most
$B$ at each input. Hence
\[
 \E[p^2\wedge B]
 \le\E[p^2\1_{E^c}]+B\mu_d(E)
 \le\eta/2+B\mu_d(E).
\]
Every $B<\eta/(2\mu_d(E))$ therefore retains less than $\eta$ energy.
By the definitions of $b_\eta$ and $T_{d,k}(\eta)$,
\[
 T_{d,k}(\eta)\ge b_\eta(p)
 \ge\frac{\eta}{2\mu_d(E)}
 \ge\frac\eta2\exp\{\Lambda_{d,k}-C_{\eta,q_0}k^{1/3}\}.
\]
For bounded $k$, the constant polynomial gives $T_{d,k}(\eta)\ge\eta$. Since $\Lambda_{d,k}\le2k$, increasing $C_{\eta,q_0}$ ensures $\log T_{d,k}(\eta)\ge \Lambda_{d,k}-C_{\eta,q_0}k^{1/3}$ in this case as well.
\end{proof}

\subsection{Uniform empirical covariance control from clipping}
We now convert the clipping estimate into a uniform lower bound on empirical energy. The argument combines three standard techniques: symmetrization, contraction, and Bousquet's inequality; see Appendix~\ref{app:tools}.

\begin{proposition}[Clipping gives stable sampling]\label[proposition]{prop:clip-cov}
Let $V$ have dimension $m$. Suppose
$\E[h^2\wedge B]\ge\alpha$ for every unit $h\in V$, where
$B\ge1$ and $0<c<\alpha<1$. Then
\[
 n\ge C_{\alpha,c}B(m+t)
 \quad\Longrightarrow\quad
 \Pp(\hSigma_V\succeq cI_m)\ge1-e^{-t}.
\]
\end{proposition}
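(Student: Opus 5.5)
We plan to bound the clipped empirical process uniformly over the unit sphere of $V$. Write $S_V=\{h\in V:\E[h^2]=1\}$ and $g_h=h^2\wedge B$. For $h\in S_V$,
\[
 \E_n[h^2]\ge\E_n[g_h]\ge\alpha-Z,\qquad Z:=\sup_{h\in S_V}\bigl(\E[g_h]-\E_n[g_h]\bigr).
\]
It therefore suffices to show $Z\le\alpha-c$ with probability at least $1-e^{-t}$. By homogeneity, this lower bound on $S_V$ gives $\hSigma_V\succeq cI_m$.

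\emph{Step 1: expectation of $Z$.} We symmetrize: $\E Z\le 2\E\sup_{h\in S_V}\frac1n\sum_i\eps_i g_h(X_i)$, with Rademacher signs $\eps_i$. The map $y\mapsto y^2\wedge B$ is $2\sqrt B$-Lipschitz on $\R$, since its slope is $2y$ for $|y|\le\sqrt B$ and $0$ beyond. It vanishes at $0$, so the Ledoux--Talagrand contraction principle gives
\[
 \E Z\le 4\sqrt B\,\E\sup_{h\in S_V}\frac1n\sum_i\eps_i h(X_i).
\]
Writing $h=a^\top\Phi_V$ with $\|a\|_2=1$, the supremum equals $\|\frac1n\sum_i\eps_i\Phi_V(X_i)\|_2$. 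By Jensen and orthonormality, its expectation is at most $\sqrt{\E\|\Phi_V(X)\|_2^2/n}=\sqrt{m/n}$. Hence $\E Z\le4\sqrt{Bm/n}$, and this is at most $(\alpha-c)/3$ once $n\ge C B m/(\alpha-c)^2$.

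\emph{Step 2: concentration via Bousquet.} The class $\{\E g_h-g_h\}$ has values in $[-B,B]$ after centering, so its sup-norm is at most $B$. Its variance proxy satisfies
\[
 \sigma_*^2=\sup_h\operatorname{Var}(g_h)\le\sup_h\E[g_h^2]\le B\,\E[h^2]=B.
\]
This is the key point: the variance is linear in $B$, not $B^2$. Bousquet's inequality applied to $nZ$ gives, with probability at least $1-e^{-t}$,
\[
 Z\le \E Z+\sqrt{\frac{2t(\sigma_*^2+2B\,\E Z)}{n}}+\frac{Bt}{3n}.
\]
Since $\E Z\le1$ in our regime, the middle term is $O(\sqrt{Bt/n})$ and the last is $O(Bt/n)$. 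Both are at most $(\alpha-c)/3$ when $n\ge C_{\alpha,c}Bt$. Combining with Step 1 gives $Z\le\alpha-c$ whenever $n\ge C_{\alpha,c}B(m+t)$. We use $B\ge1$ only to absorb constants.

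The main obstacle is bookkeeping rather than a conceptual gap. First, the supremum is over an uncountable but separable class, so measurability is handled by restricting to a countable dense subset of $S_V$, using continuity of $h\mapsto g_h$. Second, the contraction step needs the Lipschitz map to vanish at zero, which holds here. The crucial choice is exploiting $\operatorname{Var}(g_h)\le B$ rather than $B^2$. This is what makes the required sample size linear in $B$ and is why Bousquet's inequality is used instead of a bounded-differences bound.
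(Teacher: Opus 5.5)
Your proposal is correct and follows the paper's proof essentially step for step: symmetrization and contraction with the $2\sqrt B$-Lipschitz clipped square, Jensen to get $\E\|\frac1n\sum_i\eps_i\Phi_V(X_i)\|_2\le\sqrt{m/n}$, and Bousquet's inequality with a variance proxy linear in $B$. The differences are cosmetic. You use the one-sided symmetrization and contraction, which gives constant $4$ instead of the paper's $8$, and you apply Bousquet to the unnormalized class with envelope $B$, whereas the paper rescales by $B$ and adds the zero function to work with the positive part $\overline D_B$.
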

\begin{proof}
Let $\psi_B(u)=u^2\wedge B$, and let $\mathcal S=\{h\in V:\E[h^2]=1\}$. This function is $2\sqrt B$-Lipschitz and vanishes at zero. Define
\[
 D_B=\sup_{h\in\mathcal S}\bigl(\E[\psi_B(h)]-\E_n[\psi_B(h)]\bigr),
 \qquad \overline D_B=\max\{D_B,0\}.
\]
Since $\overline D_B\le
\sup_{h\in\mathcal S}\left|\E[\psi_B(h)]-\E_n[\psi_B(h)]\right|$, Lemma~\ref{lem:symmetrization-contraction}(i) (symmetrization), followed by part (ii) (contraction) conditionally on the inputs with
$\varphi=\psi_B$ and $L=2\sqrt B$, yields
\[
 \E\overline D_B
 \le 8\sqrt B\,
 \E\sup_{h\in\mathcal S}
 \left|\frac1n\sum_i\varepsilon_i h(X_i)\right|
 =8\sqrt B\,
 \E\left\|\frac1n\sum_i\varepsilon_i\Phi_V(X_i)\right\|_2
 \le8\sqrt{\frac{Bm}{n}}.
\]
The last step uses Jensen's inequality and
\(
 \E\left\|\frac1n\sum_i\varepsilon_i\Phi_V(X_i)\right\|_2^2
 =\frac1n\E\|\Phi_V(X)\|_2^2=m/n.
\)
For a unit $h$, the centered function
\[
 g_h=\frac{\E[\psi_B(h)]-\psi_B(h)}B
\]
has absolute value at most $1$ and variance at most $1/B$ since
\(
 \E[\psi_B(h)^2]\le B\E[\psi_B(h)]\le B\E[h^2]=B.
\)

Next, we apply \Cref{thm:bousquet} (Bousquet's inequality) to these functions and the zero function. Its supremum is $Z=n\overline D_B/B$. After rescaling, with probability at least $1-e^{-t}$,
\begin{align*}
 \overline D_B
 &\le \E\overline D_B
 +\sqrt{\frac{2Bt}{n}\bigl(1+2\E\overline D_B\bigr)}
 +\frac{Bt}{3n}.
\end{align*}
When $n\ge CB(m+t)$ with $C$ large, the expectation is at most one so right side is bounded by
\[
 C'\left(\sqrt{\frac{B(m+t)}n}
                +\frac{B(m+t)}n\right).
\]
We can choose the sample-size constant so this is at most $\alpha-c$. On that event, for every $h\in\mathcal S$,
\[
 \E_n[h^2]
 \ge \E_n[\psi_B(h)]
 \ge \E[\psi_B(h)]-\overline D_B
 \ge\alpha-(\alpha-c)=c.
\]
For any nonzero $h\in V$, apply this bound to $h/\|h\|_2$ and multiply by $\|h\|_2^2$. The zero function satisfies the inequality trivially.

For $h=a^\top\Phi_V$, orthonormality gives
$\E[h^2]=\|a\|_2^2$, while
$\E_n[h^2]=a^\top\widehat\Sigma_Va$.
Thus the simultaneous bound $\E_n[h^2]\ge c\E[h^2]$
for all $h\in V$ is equivalent to
\[
 a^\top\widehat\Sigma_Va\ge c\|a\|_2^2
 \qquad(a\in\R^m),
\]
which is precisely $\widehat\Sigma_V\succeq cI_m$.
\end{proof}

\section{A polynomial concentrated on a Hamming ball}\label{sec:window}
In this section, we construct a spatially concentrated polynomial in the Krawtchouk basis, which is defined as 
\begin{equation}\label{eq:krawbasis}
 e_j(x)=\binom Dj^{-1/2}\sum_{|S|=j}\chi_S(x),
 \qquad 0\le j\le D.
\end{equation}

To guide the construction, we specify four requirements:
\begin{itemize}
\item \textbf{Radial symmetry:} $p$ depends only on $S_D$.
\item \textbf{Bounded degree:} $\deg p\le\ell$ for some appropriately chosen $\ell$.
\item \textbf{Unit norm:} $\E[p^2]=1$, so $p^2\,d\mu_D$ is a probability measure describing how the polynomial's energy is distributed. This lets us apply concentration bounds directly to its energy.
\item \textbf{Concentration at a high threshold:} for a prescribed leakage $\rho$,
\(
 \E[p^2\1_{\{S_D<u\}}]\le\rho,
\)
with $u$ large enough that $E_u$ has the required small probability.
\end{itemize}

\paragraph{Multiplication measures concentration.}
For a unit-norm polynomial $p$, we define
\[
 \eps^2=\|(S_D-\tau)p\|_2^2
 =\E[(S_D-\tau)^2p^2]
\]
to measure where $p$ concentrates its energy. If $\eps$ is small,
most energy lies where $S_D$ is close to $\tau$.
In particular, Markov's inequality gives
\[
 \E[p^2\1_{\{S_D<\tau-a\}}]\le\frac{\eps^2}{a^2}
 \qquad(a>0).
\]
Thus, the Hamming ball $\{S_D\ge\tau-a\}$ carries at least
$1-\eps^2/a^2$ of the energy.
We want, simultaneously, a large center $\tau$ and a small multiplication residual $\eps$. 
The following recurrence allows us to express this objective in terms of the coefficients $c_j$.

\begin{lemma}[Radial recurrence]\label[lemma]{lem:recurrence}
Define $a_j=\sqrt{(j+1)(D-j)}$, with $a_{-1}=a_D=0$, let $\{e_j\}_{j=0}^D$ denote the Krawtchouk basis as defined in \eqref{eq:krawbasis}, and let $e_{-1}=e_{D+1}=0$. Then $e_j$ is a polynomial of degree exactly $j$ in $S_D$, and
\begin{equation*}\label{eq:recurrence}
 S_De_j=a_je_{j+1}+a_{j-1}e_{j-1}.
\end{equation*}
\end{lemma}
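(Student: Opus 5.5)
The plan is to compute $S_De_j$ directly at the level of Walsh characters, which avoids needing any explicit formula for Krawtchouk polynomials. Write $\sigma_j=\sum_{|S|=j}\chi_S$, so that $e_j=\binom Dj^{-1/2}\sigma_j$.

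\emph{Step 1: the character recurrence.} Since $S_D=\sum_{i=1}^D\chi_{\{i\}}$ and $x_i^2=1$, we have $\chi_{\{i\}}\chi_S=\chi_{S\triangle\{i\}}$. Summing over $i$ and over $|S|=j$, I would count how often each set arises.
\begin{itemize}
\item A set $T$ with $|T|=j+1$ arises from $S=T\setminus\{i\}$ for each of its $j+1$ elements $i$.
\item A set $T$ with $|T|=j-1$ arises from $S=T\cup\{i\}$ for each of the $D-j+1$ indices $i\notin T$.
\end{itemize}
This gives
\[
 S_D\sigma_j=(j+1)\sigma_{j+1}+(D-j+1)\sigma_{j-1}.
\]
The boundary cases $j=0$ and $j=D$ are consistent with this if we set $\sigma_{-1}=\sigma_{D+1}=0$.

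\emph{Step 2: normalize.} Multiplying by $\binom Dj^{-1/2}$ and expressing the right side through $e_{j\pm1}$, the coefficient of $e_{j+1}$ is
\[
 (j+1)\sqrt{\binom D{j+1}\Big/\binom Dj}=(j+1)\sqrt{\frac{D-j}{j+1}}=\sqrt{(j+1)(D-j)}=a_j .
\]
The coefficient of $e_{j-1}$ is
\[
 (D-j+1)\sqrt{\binom D{j-1}\Big/\binom Dj}=(D-j+1)\sqrt{\frac{j}{D-j+1}}=\sqrt{j(D-j+1)}=a_{j-1}.
\]
The boundary conventions $a_{-1}=a_D=0$ match the vanishing terms at $j=0$ and $j=D$.

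\emph{Step 3: degree in $S_D$.} I would argue by induction on $j$. We have $e_0=1$ and $e_1=S_D/\sqrt D$. For $j<D$, $a_j>0$, so the recurrence can be rewritten as
\[
 e_{j+1}=a_j^{-1}\bigl(S_De_j-a_{j-1}e_{j-1}\bigr).
\]
Hence if $e_j$ has exact degree $j$ in $S_D$, then $e_{j+1}$ has exact degree $j+1$ in $S_D$.

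One subtlety is that polynomials in $S_D$ are meaningful only as functions on the $D+1$ values of $S_D$, so degree up to $D$ is well defined. The identity in Step 1 is exact for all $j$, so no issue arises. The only care needed is the index bookkeeping at the boundaries, and I expect no step to be a genuine obstacle.
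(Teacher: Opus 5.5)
Your proposal is correct and matches the paper's proof: you use the same symmetric-difference count, giving multiplicities $j+1$ and $D-j+1$, then normalize and solve successively for $e_{j+1}$ to get exact degree. Your binomial-ratio computations and boundary checks spell out what the paper compresses into ``normalizing gives the claimed recurrence.''
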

\begin{proof}
Multiplying a Walsh character by $x_i$ adds $i$ to its index set if absent and removes it if present. Each resulting set of size $j+1$ occurs $j+1$ times, and each set of size $j-1$ occurs $D-j+1$ times. Normalizing gives the claimed recurrence. Starting with $e_0=1$, solving successively for $e_{j+1}$ proves the degree assertion.
\end{proof}

Consequently, multiplying $p=\sum_{j=0}^{\ell}c_je_j$ by $S_D$ transforms its coefficient vector according to the linear map
\[
 (J_Dc)_j=a_{j-1}c_{j-1}+a_jc_{j+1}.
\]
Here $c_j=0$ outside $0,\ldots,\ell$. 
Orthonormality therefore gives the identity
\[
 \epsilon^2
 =\sum_{j=0}^D\bigl((J_D-\tau I)c\bigr)_j^2
 =\|(J_D-\tau I)c\|_2^2
\]
which directly relates energy concentration to the coefficients in the Krawtchouk basis.

\paragraph{Choosing the coefficients $c_j$.}
We now seek a unit vector supported on degrees at most $\ell$ for which $\|(J_D-\tau I)c\|_2$ is small. In other words, we have to pick $c$ such that it is an \emph{approximate eigenvector} of $J_D$ with a large approximate eigenvalue $\tau$. Let $L$ denote the number of nonzero coefficients. Three constraints guide our construction:
\begin{itemize}
\item \textbf{Use consecutive degrees.} The mapping $c \mapsto J_Dc$ couples only adjacent indices. Consecutive coefficients allow the two neighboring contributions to reinforce each other while gaps would interrupt this reinforcement.
\item \textbf{Place non-zero coefficients only near $\ell$.} The weights $a_j$ monotonically increase until index $D/2$. Over a short interval near $\ell$, both neighboring weights are close to $a_*=\sqrt{\ell(D-\ell)}$. If $c_{j-1}+c_{j+1}\approx2c_j$, then $(J_Dc)_j\approx2a_*c_j$. This suggests taking $\tau=2a_*$. When $\ell\ll D$, this concentration location is approximately $2\sqrt{D\ell}$, well above the typical scale $\sqrt D$ of $S_D$ when $\ell$ is large. A cutoff close to $\tau=2\sqrt{\ell(D-\ell)}$ therefore defines a small-probability set on which we aim to concentrate the energy.
\item \textbf{Taper the coefficients to zero.}
A natural first attempt would be to assign equal coefficients $L^{-1/2}$ to $L$ consecutive degrees ending at $\ell$, and zero elsewhere. This gives $\sum_j c_j^2=1$, but the abrupt cutoff creates a large error since $c_{\ell+1}=0$,
\(
 ((J_D-\tau I)c)_{\ell+1}
 =a_\ell c_\ell=a_\ell/\sqrt L.
\)
Thus, this single coordinate already contributes $a_\ell/\sqrt L$ to the residual norm. To avoid this error, we let the coefficients rise gradually from zero and then fall gradually back to zero.
The relevant measure of smoothness is the second difference
\[
 \Delta^2c_j
 :=(c_{j+1}-c_j)-(c_j-c_{j-1})=c_{j-1}-2c_j+c_{j+1},
\]
which measures how much the slope of the coefficient sequence
changes between adjacent indices. Small second differences
give precisely the relation $c_{j-1}+c_{j+1}\approx2c_j$
needed above. We require this smoothness also at the ends
of the interval, where the sequence joins the zero coefficients.
\end{itemize}
In summary, we are looking to construct a \emph{smooth window}: a coefficient sequence supported on $L$ consecutive degrees and tapered to zero at both ends, with small second differences. The next section constructs such a sequence. Increasing $L$ permits smaller second differences, but includes matrix weights farther from $a_*$. Balancing these two errors leads to our choice $L=\lfloor\ell^{1/3}\rfloor$.

\subsection{Choosing the coefficients and bounding the error}
The next lemma makes this plan quantitative.

\begin{lemma}[A concentrating polynomial]\label[lemma]{lem:window}
Fix $q_0<1/2$. For all sufficiently large $\ell\le q_0D$, there is a unit norm radial polynomial $p$ of exact degree $\ell$ such that
\begin{equation*}
 \|(S_D-2\sqrt{\ell(D-\ell)})p\|_2
 \le C_{q_0}\sqrt D\,\ell^{-1/6}.
\end{equation*}
\end{lemma}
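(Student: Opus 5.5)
We construct the window explicitly. Put $L=\lfloor\ell^{1/3}\rfloor$ and fix a smooth profile $w\in C^2(\R)$ supported on $[0,1]$ with $w(0)=w(1)=w'(0)=w'(1)=0$; for instance $w(x)=\sin^2(\pi x)$. Define
\[
c_j=Z^{-1/2}\,w\!\left(\tfrac{\ell-j}{L}\right)\quad(\ell-L\le j\le \ell),
\]
with $c_j=0$ otherwise, where $Z$ normalizes $\sum_j c_j^2=1$. We then set $p=\sum_j c_je_j$. By \Cref{lem:recurrence} and orthonormality of the $e_j$, $p$ is radial with $\E[p^2]=1$. The profile with $w(0)=0$ would give $c_\ell=0$, so we shift the profile by one index to ensure $c_\ell\neq0$, so $p$ has exact degree $\ell$. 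The normalization satisfies $Z\asymp L$.

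To bound the residual, write $\tau=2a_*$ with $a_*=\sqrt{\ell(D-\ell)}$, and use the identity $\eps^2=\|(J_D-\tau I)c\|_2^2$ established above. We decompose
\[
((J_D-\tau I)c)_j=a_*\Delta^2c_j+(a_{j-1}-a_*)c_{j-1}+(a_j-a_*)c_{j+1}.
\]

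The first term is controlled by smoothness. Since $w$ is $C^2$ and vanishes to first order at the endpoints, including across the junction with the zeros, we have $|\Delta^2c_j|\lesssim Z^{-1/2}L^{-2}$ on $O(L)$ indices. Hence $\|a_*\Delta^2c\|_2\lesssim a_*L^{-2}$.

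The second and third terms are controlled by the variation of the weights. For $|j-\ell|\le L+1$, the map $j\mapsto a_j^2=(j+1)(D-j)$ has derivative $D-2j-1$, which is $O(D)$. This gives $|a_j-a_*|\lesssim (D L+D)/a_*$. Since $\ell\le q_0D$, we have $a_*\asymp\sqrt{\ell D}$, so $|a_j-a_*|\lesssim L\sqrt{D/\ell}$. Because $\|c\|_2=1$, these terms contribute at most $\lesssim L\sqrt{D/\ell}$.

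Combining the three bounds,
\[
\eps\lesssim \sqrt{\ell D}\,L^{-2}+L\sqrt{D/\ell}.
\]
With $L\asymp\ell^{1/3}$, both terms equal $\sqrt D\,\ell^{-1/6}$, with constants depending on $q_0$ through $a_*\asymp\sqrt{\ell D}$ and through the bound $D-\ell\ge(1-q_0)D$.

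The main obstacle is the boundary bookkeeping. The indices $j=\ell-L-1$ and $j=\ell+1$ lie just outside the support, and the endpoint conditions on $w$ and $w'$ must make $\Delta^2c$ there $O(Z^{-1/2}L^{-2})$ rather than $O(Z^{-1/2}L^{-1})$. We also need $\ell-L-1\ge0$, which holds for large $\ell$, and $\ell+1\le D$. Taylor expansion of $w$ with the bounded $w''$ handles all indices uniformly.
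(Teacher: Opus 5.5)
Your proposal is correct and follows essentially the same route as the paper: a tapered window of $L=\lfloor\ell^{1/3}\rfloor$ coefficients ending at degree $\ell$, the split $a_*\Delta^2c_j+(a_{j-1}-a_*)c_{j-1}+(a_j-a_*)c_{j+1}$, and the balance $\sqrt{\ell D}\,L^{-2}+L\sqrt{D/\ell}\asymp\sqrt D\,\ell^{-1/6}$. The paper uses $b(x)=x^2(1-x)^2$ sampled at $r/(L+1)$, which is your one-index shift made explicit. One small correction: $\sin^2(\pi x)$ extended by zero is not $C^2(\mathbb R)$, because $w''$ jumps from $0$ to $2\pi^2$ at the endpoints. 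This does not matter, since only a Lipschitz derivative is needed for $|w(x+h)-2w(x)+w(x-h)|\le \mathrm{Lip}(w')\,h^2$, and that is also all the paper's profile satisfies.
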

\begin{proof}
We use the $L=\lfloor\ell^{1/3}\rfloor$ degrees $\ell-L+1,\ldots,\ell$. To make the coefficients small near the two ends of this interval, define
\[
 b(x)=\begin{cases}x^2(1-x)^2,&0\le x\le1,\\0,&\text{otherwise},\end{cases}
 \qquad
 c_{\ell-L+r}=Z_L^{-1/2}b\!\left(\frac r{L+1}\right)
 \quad(1\le r\le L),
\]
where $Z_L=\sum_{r=1}^L b(r/(L+1))^2$ normalizes $\sum_jc_j^2$ to one. All other coefficients are zero, including at integer indices outside $0,\ldots,D$. For large $\ell$, $2\le L\le\ell/4$.

We first bound the second differences of this sequence. The function $b$ is bounded above and below by positive constants on $[1/4,3/4]$. A fixed positive fraction of the $L$ sampling points lie in that interval, so $cL\le Z_L\le CL$ for positive constants $c,C$. In particular, the normalization contributes a factor of order $L^{-1/2}$.

Both $b$ and its first derivative vanish at $0$ and $1$. Extending $b$ by zero therefore leaves its derivative continuous, and we have $|b'(u)-b'(v)|\le C|u-v|$ for all $u,v$ since inside $[0,1]$ its second derivative is bounded, and outside it the derivative is zero. For the sampling step $\delta_L=1/(L+1)$, it follows that
\[
 |b(x+\delta_L)-2b(x)+b(x-\delta_L)|
 =\left|\int_0^{\delta_L}
       \bigl(b'(x+t)-b'(x-\delta_L+t)\bigr)\,dt\right|
 \le C\delta_L^2.
\]
Consequently, for $\Delta^2c_j=c_{j+1}-2c_j+c_{j-1}$, every second difference has magnitude at most $CZ_L^{-1/2}\delta_L^2\le CL^{-5/2}$. Only the $L+2$ indices $\ell-L,\ldots,\ell+1$ can contribute, since at all other indices the three coefficients are zero. Hence
\[
 \|\Delta^2c\|_{\ell_2(\mathbb Z)}
 =\left(\sum_{j\in\mathbb Z}|\Delta^2c_j|^2\right)^{1/2}
 \le\sqrt{L+2}\,CL^{-5/2}\le CL^{-2}.
\]
Now, set $p=\sum_jc_je_j$, so $\|p\|_2=1$ and $c_\ell>0$. We separate the residual into two contributions: the second differences of $c$ and the variation of the weights $a_j$. Since $p=\sum_j c_j e_j$, the coefficient multiplying $e_j$ in $(S_D-2a_*)p$ is
\[
\begin{aligned}
 ((J_D-2a_*I)c)_j
 &=a_jc_{j+1}+a_{j-1}c_{j-1}-2a_*c_j\\
 &=a_*(c_{j-1}-2c_j+c_{j+1})
   +(a_j-a_*)c_{j+1}
   +(a_{j-1}-a_*)c_{j-1}\\
 &=a_*\Delta^2c_j
   +(a_j-a_*)c_{j+1}
   +(a_{j-1}-a_*)c_{j-1},
\end{aligned}
\]
where $a_*=\sqrt{\ell(D-\ell)}$. The first term is controlled by the smoothness estimate above. For the remaining terms, we only need to bound those where $c_{j+1}$ or $c_{j-1}$ is nonzero.  Each nonzero coefficient \(c_r\) contributes through the weights \(a_{r-1}\) and \(a_r\). Since \(|r-\ell|\le L+2\) on the support of \(c\), both weight indices lie within \(L+3\) of \(\ell\). 
Using $a_i^2=(i+1)(D-i)$ and $a_*^2=\ell(D-\ell)$,
\[
\begin{aligned}
 a_i^2-a_*^2
 &=(i-\ell)(D-i-\ell)+(D-i),\\
 |a_i^2-a_*^2|
 \le |i-\ell|&\,|D-i-\ell|+|D-i|
 \le D(|i-\ell|+1),
\end{aligned}
\]
where the last inequality uses $0\le i,\ell\le D$.

Moreover, $\ell\le q_0D$ implies $a_*\ge\sqrt{1-q_0}\sqrt{D\ell}$. Hence, whenever \(c_{i+1}\ne0\) or \(c_i\ne0\),
\[
 |a_i-a_*|
 =\frac{|a_i^2-a_*^2|}{a_i+a_*}
 \le
 \frac{D(|i-\ell|+1)}
      {\sqrt{1-q_0}\sqrt{D\ell}}
\le C_{q_0}\sqrt{\frac D\ell}\,(L+4).
\]

Thus, using $L\asymp\ell^{1/3}$,
\[
 \|(J_D-2a_*I)c\|_2
 \le C_{q_0}\left(
 \frac{\sqrt{D\ell}}{L^2}
 +\sqrt{\frac D\ell}(L+4)\right)
 \le C_{q_0}\sqrt D\,\ell^{-1/6}.
\]
Finally, orthogonality of the $e_j$ proves the statement of the lemma.
\end{proof}

\subsection{From an approximate eigenvector to small-set concentration}
\Cref{lem:window} places the energy of $p$ \emph{near} $\tau=2\sqrt{\ell(D-\ell)}$, but does not determine how much lies \emph{above} $\tau$. The ball $\{S_D\ge\tau\}$ could therefore miss substantial energy. Instead, taking the ball $\{S_D\ge\tau-a\}$ with appropriately chosen $a>0$, we can upper bound the leakage by a constant $\rho$.
However, we need to be careful as to not increase the probability mass of the ball too much when lowering the cutoff. The next lemma shows that for $a=M_{\rho,q_0}\sqrt D\,\ell^{-1/6}$, this increase costs at most $C_{\rho,q_0}\ell^{1/3}$ in the exponent.

\begin{lemma}[One-dimensional concentration witness]\label[lemma]{lem:scalar-witness}
Fix $\rho\in(0,1)$ and $q_0\in(0,1/2)$. There are constants $M_{\rho,q_0},C_{\rho,q_0}>0$ such that, for all sufficiently large $\ell\le q_0D$, there is a unit radial polynomial $p$ of exact degree $\ell$ for which the cutoff $\tau=2\sqrt{\ell(D-\ell)}~,~u=\tau-M_{\rho,q_0}\sqrt D\,\ell^{-1/6}>0$
satisfies
\[
 \E[p^2\1_{\{S_D<u\}}]\le\rho,\qquad
 \mu_D(S_D\ge u)
 \le\exp\{-D\Psi(\ell/D)+C_{\rho,q_0}\ell^{1/3}\}.
\]
\end{lemma}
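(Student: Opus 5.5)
The plan is to take $p$ from \Cref{lem:window} and handle the two claims separately: the leakage bound by Markov's inequality, and the probability bound by a Chernoff bound for the binomial tail.

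\emph{Leakage.} Let $\eps=\|(S_D-\tau)p\|_2\le C_{q_0}\sqrt D\,\ell^{-1/6}$ be the residual from \Cref{lem:window}. Put $a=M\sqrt D\,\ell^{-1/6}$. The Markov estimate recorded before \Cref{lem:recurrence} gives
\[
\E[p^2\1_{\{S_D<\tau-a\}}]\le\eps^2/a^2\le C_{q_0}^2/M^2.
\]
Choosing $M_{\rho,q_0}=C_{q_0}\rho^{-1/2}$ makes this at most $\rho$.

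Positivity of $u$ follows from $\tau\ge2\sqrt{1-q_0}\sqrt{D\ell}$. This grows like $\sqrt{D\ell}$, while $a$ is only of order $\sqrt D\,\ell^{-1/6}$, so $u>0$ once $\ell$ is large.

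\emph{Probability of the ball.} The key identity is
\[
\tau/D=2\sqrt{q(1-q)}=1-2\beta,\qquad q=\ell/D,\quad \beta=\tfrac12-\sqrt{q(1-q)}.
\]
Write $S_D=D-2N$ with $N\sim\mathrm{Bin}(D,1/2)$. Then $\{S_D\ge u\}=\{N\le(D-u)/2\}$. Here $(D-u)/2=D\beta'$ with $\beta'=\beta+a/(2D)$.

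The Chernoff bound gives
\[
\Pp(N\le D\beta')\le\exp\{-D(\log2-\HH(\beta'))\},
\]
valid provided $\beta'<1/2$. This holds since $u>0$, so $\beta'\le 1/2-\sqrt{q_0(1-q_0)}+o(1)$, which stays bounded away from $1/2$.

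Since $\log2-\HH(\beta)=\Psi(q)$, it remains to bound the loss
\[
D\bigl(\HH(\beta')-\HH(\beta)\bigr)\le D\,\HH'(\beta)\,\frac{a}{2D}=\frac a2\log\frac{1-\beta}{\beta},
\]
using concavity of $\HH$.

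\emph{The main obstacle.} This derivative factor must be controlled, because $\beta\to0$ as $q\to0$ and the factor then blows up. The remedy is to note $\beta\ge c\,q$. Indeed, $\tfrac12-\sqrt{q(1-q)}\gtrsim q$ for small $q$ (when $\beta\approx(1-2\sqrt q)^2/2$) and is bounded below for $q\le q_0$. Hence $\log(1/\beta)\lesssim\log(D/\ell)$.

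The loss is then of order $M\sqrt D\,\ell^{-1/6}\log(D/\ell)$. This is not $O(\ell^{1/3})$ when $D\gg\ell$, so the crude linearisation fails for large $D/\ell$. The refinement is to compute more carefully.

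\emph{Refined computation.} We have $\HH'(\beta)=\log\frac{1-\beta}\beta$, and for small $q$, $\beta\approx\tfrac12-\sqrt q$. So $\HH'(\beta)\approx4\sqrt q$ when $q$ is small. More generally $\HH'(\beta)\le C_{q_0}\sqrt q$ uniformly for $q\le q_0$, because $\beta$ stays near $1/2$ precisely when $q$ is small, and bounded away from $0$ otherwise. I previously conflated the two limits: $\beta\to1/2$ as $q\to0$, not $\beta\to0$.

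With this bound the loss is at most
\[
\frac a2\,C\sqrt{\ell/D}=C M\sqrt D\,\ell^{-1/6}\sqrt{\ell/D}=C_{\rho,q_0}\,\ell^{1/3}.
\]
This is the claimed bound. The only care needed is checking that $\beta'$ stays in a range where the tangent bound applies. Concavity gives this automatically, since $\HH(\beta')\le\HH(\beta)+\HH'(\beta)(\beta'-\beta)$ for all $\beta'$.
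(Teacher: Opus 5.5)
Your proof is correct and is essentially the paper's: Markov's inequality applied to the residual from \Cref{lem:window} with $M_{\rho,q_0}=C_{q_0}\rho^{-1/2}$, followed by the binomial Chernoff bound linearized at the center $\tau$. Your loss $\tfrac a2\HH'(\beta)$ equals the paper's $(\tau-u)I'(\tau/D)=a\artanh(\tau/D)$, and your use of concavity of $\HH$ plays the role of the paper's monotonicity of $I'$.

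In a write-up, delete the abandoned detour (e.g.\ $\beta\to0$ as $q\to0$) and the claim that $\beta'$ stays bounded away from $1/2$, which fails as $q\to0$ and is unneeded since $u>0$ already gives $\beta'<1/2$. Justify $\HH'(\beta)\le C_{q_0}\sqrt q$ directly from $\HH'(\beta)=2\artanh\bigl(2\sqrt{q(1-q)}\bigr)$ together with $\artanh z\le C_{q_0}z$ for $z\le2\sqrt{q_0(1-q_0)}<1$.
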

\begin{proof}
\emph{Step 1: retaining the energy.}
Take $p$ from \Cref{lem:window} and set $a=M_{\rho,q_0}\sqrt D\,\ell^{-1/6}$, so $u=\tau-a$. On $\{S_D<u\}$, the distance from $S_D$ to $\tau$ exceeds $a$.
Consequently,
\[
 \E[p^2\1_{\{S_D<u\}}]
 \le\frac{\E[(S_D-\tau)^2p^2]}{a^2}
 \le\frac{C_{q_0}^2D\ell^{-1/3}}
          {M_{\rho,q_0}^2D\ell^{-1/3}}
 \le\rho
\]
if $M_{\rho,q_0}\ge C_{q_0}/\sqrt\rho$. Also $\tau\ge2\sqrt{(1-q_0)D\ell}$, so $a/\tau=O_{\rho,q_0}(\ell^{-2/3})$. Thus $0<u<\tau<D$ for large $\ell$, so both cutoffs lie in the upper-tail range where the subsequent binomial probability bound applies.

\emph{Step 2: bounding the uniform probability.}
For $0\le z<1$, write
$
 I(z)=\log2-\HH((1-z)/2)~, ~I'(z)=\artanh z.
$
The binomial Chernoff bound (e.g., \cite[Section~2.2]{BLM13}) states that $\mu_D(S_D\ge u)\le\exp\{-DI(u/D)\}$. At the cutoff $\tau$, the rate is exactly
\(
 DI(\tau/D)=D\Psi(\ell/D).
\)
We therefore need to bound the loss in this rate when the cutoff moves from $\tau$ down to $u$. Since $I'$ is increasing, the gap in the exponent is bounded as
\begin{equation}\label{eq:scalar-rate-loss}
 D\Psi(\ell/D)-DI(u/D)
 =D\int_{u/D}^{\tau/D}I'(z)\,dz
 \le(\tau-u)I'(\tau/D).
\end{equation}
The assumption $\ell/D\le q_0<1/2$ gives
$\tau/D\le2\sqrt{q_0(1-q_0)}<1$. On this interval,
$\artanh z\le C_{q_0}z$, and therefore
\(
 I'(\tau/D)\le C_{q_0}\sqrt{\ell/D}.
\)
Combining the last two bounds yields
$D\Psi(\ell/D)-DI(u/D)
 \le C_{\rho,q_0}
 \sqrt D\,\ell^{-1/6}
 \sqrt{\ell/D}
 =C_{\rho,q_0}\ell^{1/3}.$
Substituting into the Chernoff bound proves the second assertion.
\end{proof}

\section{From one polynomial to a concentrated subspace}\label{sec:common}
In this section we extend the spatially concentrated polynomial of the preceding section to many directions via multiplication by elements in $\cH_{d,r}$, the space of homogeneous multilinear degree-$r$ harmonic polynomials. These are characterized by the identity $\sum_{i=1}^d\partial_i f=0$.
For example, $\cH_{d,0}$ consists of constants, while
$
\cH_{d,1}=\left\{\sum_i v_i x_i:\sum_i v_i=0\right\}.
$

\subsection{Uniform energy control across harmonic directions}
All unit polynomials in $\cH_{d,r}$ have the same distribution of energy across slices. A shared radial factor therefore changes this distribution identically in every direction. Moreover, different harmonic degrees remain orthogonal under radial weighting. These facts follow from the identity below, a direct specialization of \cite[Theorem~3.10]{FM19}. See Appendix~\ref{app:harmonic-details} for the proof.

\begin{lemma}[Radial-weight identity]\label[lemma]{lem:harmonic-radial}
For integers $0\le r,s\le\lfloor d/2\rfloor$, $f\in\cH_{d,r}$, $g\in\cH_{d,s}$, and any real function $H$ of the
coordinate sum,
\begin{equation}\label{eq:harmonic-radial}
 \E_{\mu_d}[fgH(S_d)]
 =\begin{cases}
 0,&r\ne s,\\
 \langle f,g\rangle_d\,\E_{\mu_{d-2r}}[H(S_{d-2r})],&r=s.
 \end{cases}
\end{equation}
Here $\langle f,g\rangle_d=\E_{\mu_d}[fg]$ and $S_0=0$.
\end{lemma}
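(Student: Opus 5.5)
The approach is to prove \eqref{eq:harmonic-radial} directly from the structure of harmonic polynomials under the symmetric group, rather than invoking \cite{FM19} as a black box. The key observation is that $\cH_{d,r}$ is spanned by products of ``antisymmetric pairs'': for disjoint ordered pairs $(a_1,b_1),\ldots,(a_r,b_r)$ of distinct coordinates, set
\[
\chi_{A,B}(x)=\prod_{i=1}^r(x_{a_i}-x_{b_i}).
\]
Each such product is multilinear and homogeneous of degree $r$. It is annihilated by $\sum_i\partial_i$, since every factor is. These products span $\cH_{d,r}$ for $r\le d/2$, the standard Specht-module description used in \cite{FM19}. By bilinearity of both sides in $(f,g)$, it therefore suffices to prove the identity for $f=\chi_{A,B}$ and $g=\chi_{A',B'}$ of this form.

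\emph{Orthogonality for $r\ne s$.} Since $H(S_d)$ is a symmetric function, the map $h\mapsto\E[h\,g\,H(S_d)]$ is an $S_d$-equivariant pairing. I would argue that $\cH_{d,r}$ is exactly the set of degree-$r$ homogeneous $f$ satisfying two conditions: $\E[f\,\phi]=0$ for every homogeneous $\phi$ of degree $<r$ in the same isotypic component, and $f$ lies in the $(d-r,r)$ isotypic component of the $S_d$-action on functions. The multiplication $g\mapsto gH(S_d)$ commutes with the $S_d$-action. The space $\{gH(S_d):g\in\cH_{d,s}\}$ therefore lies in the $(d-s,s)$-isotypic component of $L_2(\mu_d)$. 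Different isotypic components are orthogonal, which gives the case $r\ne s$.

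\emph{Case $r=s$.} By Schur's lemma, the form $(f,g)\mapsto\E[fgH(S_d)]$ on the irreducible $\cH_{d,r}$ is a scalar multiple of $\langle f,g\rangle_d$. It remains to identify the scalar, which I do with the test function $f=g=\chi(x)=\prod_{i=1}^r(x_{2i-1}-x_{2i})$. The product $\chi^2$ equals $4^r$ on the event that every pair is discordant ($x_{2i-1}=-x_{2i}$) and vanishes otherwise. This event has probability $2^{-r}$, so $\E[\chi^2]=2^r$. On that event each discordant pair contributes $0$ to $S_d$, so $S_d=S'_{d-2r}$, the sum of the remaining coordinates, and these are uniform and independent of the event. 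Hence
\[
\E[\chi^2H(S_d)]=4^r2^{-r}\E_{\mu_{d-2r}}[H(S_{d-2r})]=\langle\chi,\chi\rangle_d\,\E_{\mu_{d-2r}}[H(S_{d-2r})],
\]
which pins down the scalar.

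The main obstacle is the representation-theoretic input. One needs $\cH_{d,r}$ to be irreducible, isomorphic to the Specht module $S^{(d-r,r)}$, and to occur only once in the relevant decomposition, so that Schur's lemma applies and the isotypic components are separated. One also needs the isotypic claim for $gH(S_d)$. These are precisely the contents of \cite[Theorem~3.10]{FM19} and the dimension formula there, which I would cite rather than reprove. If a self-contained route is preferred, I would instead verify the $r=s$ identity directly on pairs of products $\chi_{A,B}$ and $\chi_{A',B'}$ by a combinatorial computation over the cycle structure of their overlapping pairs, and get the $r\ne s$ case from the fact that $\chi_{A,B}$ is odd under swapping an unmatched pair. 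That computation is the messier step.
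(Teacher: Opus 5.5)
Your argument is correct, but it reaches the identity by a different route from the paper's.

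\textbf{What the paper does.} The paper never uses irreducibility. It quotes \cite[Theorem~3.10]{FM19}: for homogeneous harmonic $f,g$ and \emph{any} exchangeable probability measure $\alpha$, $\E_\alpha[fg]=0$ for unequal degrees, while for equal degrees
\[
 \E_\alpha[fg]=C_{f,g}\,\E_\alpha\Bigl[\prod_j(x_{2j-1}-x_{2j})^2\Bigr],
\]
with $C_{f,g}$ independent of $\alpha$. The paper applies this twice:
\begin{itemize}
\item with $\alpha=\mu_d$, which gives $C_{f,g}=2^{-r}\langle f,g\rangle_d$;
\item with the tilted measure $\alpha=H(S_d)\mu_d/\E[H(S_d)]$ for $H\ge0$.
\end{itemize}
It then handles $Z=0$ separately and extends to signed $H$ by splitting $H$ into positive and negative parts.

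\textbf{What you do instead.} You note that $(f,g)\mapsto\E[fgH(S_d)]$ is an $S_d$-invariant symmetric form. You then apply Schur's lemma on $\cH_{d,r}\cong S^{(d-r,r)}$, and isotypic orthogonality for $r\ne s$. This essentially reproves the relevant case of the Filmus--Mossel theorem and explains why the identity holds. It also treats signed $H$ in one step, with no normalization and no $Z=0$ case. The evaluation of the scalar through the discordant-pairs event is the same in both proofs.

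\textbf{What your route costs.} You need two representation-theoretic facts:
\begin{itemize}
\item $\cH_{d,r}$ is irreducible, via polytabloids spanning and the dimension $\binom dr-\binom d{r-1}$;
\item the modules $S^{(d-r,r)}$ are pairwise non-isomorphic for distinct $r\le d/2$.
\end{itemize}
These are standard Specht-module theory, not the content of \cite[Theorem~3.10]{FM19} as you suggest. That theorem is the exchangeable-measure statement, and citing it directly would let you bypass Schur, as the paper does.

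\textbf{Two small cleanups.}
\begin{itemize}
\item Your two-condition characterization of $\cH_{d,r}$ is unnecessary. You only use that $\cH_{d,r}$ lies in the $(d-r,r)$-isotypic component, and under $\mu_d$ the orthogonality-to-lower-degree condition is vacuous.
\item Since you work over $\mathbb R$, justify Schur explicitly. Write the form as $\langle Tf,g\rangle_d$ with $T$ self-adjoint and $S_d$-equivariant. Each eigenspace of $T$ is then invariant, so irreducibility forces $T=cI$.
\end{itemize}
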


To get an intuition as to why $d-2r$ appears, consider the unit harmonic polynomial
\[
 Q_r(x)=\prod_{j=1}^r\frac{x_{2j-1}-x_{2j}}{\sqrt2}.
\]
Taking $f=g=Q_r$ introduces the weight $Q_r^2$ in the uniform expectation. This weight vanishes unless $x_{2j-1}=-x_{2j}$ for every $j\le r$. Each pair contributes zero to $S_d$, leaving the sum of $d-2r$ independent signs. The lemma says that the same law of the coordinate sum results from weighting by $f^2/\|f\|_2^2$ for \emph{every} nonzero $f\in\cH_{d,r}$.

In particular, choose $p_r$ with unit norm under $S_{d-2r}$ and energy at most $\rho$ below a cutoff $u$. Apply the identity with $H(s)=p_r(s)^2$ and with $H(s)=p_r(s)^2\1_{\{s<u\}}$, respectively. For every nonzero $h_f=f p_r(S_d)$, this yields
\[
 \|h_f\|_2^2=\|f\|_2^2,
 \qquad
 \frac{\E[h_f^2\1_{\{S_d<u\}}]}{\|h_f\|_2^2}
 =\E[p_r(S_{d-2r})^2\1_{\{S_{d-2r}<u\}}]\le\rho.
\]
Thus multiplication by the radial factor preserves dimension and transfers the one-dimensional concentration bound to the entire space $W_r=\{f p_r(S_d):f\in\cH_{d,r}\}$, with no dimension factor in the leakage. Hence we call this construction a \emph{harmonic lift}. 
Different harmonic degrees remain orthogonal even after radial weighting. Consequently, if $h_r\in W_r$ and $h_s\in W_s$ with $r\ne s$, then
\[
 \E[h_rh_s]=0,
 \qquad
 \E[h_rh_s\1_{E^c}]=0
\]
for any Hamming ball $E$ centered at the all-plus vertex. Thus, for $h=\sum_{r=0}^R h_r$, the cross terms vanish both in its total energy and in its energy outside $E$:
\[
 \E[h^2]=\sum_{r=0}^R\E[h_r^2],
 \qquad
 \E[h^2\1_{E^c}]=\sum_{r=0}^R\E[h_r^2\1_{E^c}].
\]
If we choose one cutoff so that $\E[h_r^2\1_{E^c}]\le\rho\E[h_r^2]$ for every $r$ and every $h_r\in W_r$, summing these inequalities gives the same concentration bound for every function in $W_0\oplus\cdots\oplus W_R$.

Combining harmonic degrees $0\le r\le R$ supplies $\binom dR$ concentrated directions, but leaves only $k-r$ degrees for the radial factor in sector $r$. Its effective coordinate sum has $d-2r$ terms, so the one-dimensional construction has center $2\sqrt{(k-r)(d-k-r)}$. This center decreases with $r$. The common cutoff must therefore lie below the smallest center, attained at $R$. We define that center and its binomial tail exponent as
\begin{equation*}
 \tau_R=2\sqrt{(k-R)(d-k-R)},\qquad
 \mathcal E_{d,k}(R)=dI(\tau_R/d),\qquad 0\le R\le k.
\end{equation*}
Thus $\mathcal E_{d,k}(R)$ determines the leading exponential decay of the ball’s probability after reserving degree $R$ for the harmonic factors, before adjusting the cutoff to ensure energy concentration. At $R=0$ it equals $\Lambda_{d,k}$, and increasing $R$ trades a larger subspace for a smaller concentration exponent. Explicitly parametrizing this tradeoff will allow for the subsequent optimized choice $R=\lfloor k^{1/3}\rfloor$.

We choose $\deg p_r\le k-r$, so each product has Boolean degree at most $k$. The available number of harmonic directions then is \cite[Corollary~3.9(b)]{FM19}:
\begin{equation}\label{eq:harmonic-dims}
 \dim\cH_{d,r}=\binom dr-\binom d{r-1},\qquad \binom d{-1}=0.
\end{equation}

\begin{proposition}[Harmonic lift]\label[proposition]{prop:harmonic-lift}
Fix $\rho\in(0,1)$ and $q_0\in(0,1/2)$. For integers $d,k,R$ with $1\le k\le q_0d$ and $0\le R\le k$, there are $W_{\le R}\subseteq\cP_{d,k}$ of dimension $\binom dR$ and a permutation-invariant $E$ such that, with $K=k-R$,
\begin{equation*}
 \mu_d(E)\le e^{-\mathcal E_{d,k}(R)+C_{\rho,q_0}K^{1/3}},
 \qquad \E[h^2\1_{E^c}]\le\rho \E[h^2]\quad(h\in W_{\le R}).
\end{equation*}
Consequently,
\begin{equation}\label{eq:harmonic-cost}
 \mu_d(E)\le e^{-\Lambda_{d,k}+C_{\rho,q_0}(R+k^{1/3})}.
\end{equation}
\end{proposition}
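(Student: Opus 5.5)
The construction takes, for each harmonic degree $0\le r\le R$, a radial factor $p_r$ supplied by \Cref{lem:window} on the cube of dimension $D_r=d-2r$ with degree $\ell_r=k-r$. It sets $W_r=\{f\,p_r(S_d):f\in\cH_{d,r}\}$ and $W_{\le R}=W_0\oplus\cdots\oplus W_R$.

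\emph{Parameter check.} We need $\ell_r\le q_0'D_r$ for some $q_0'<1/2$. Since $k\le q_0d$, we have $(k-r)/(d-2r)\le k/d\le q_0$, because $k/d\le1/2$. We also need $r\le\lfloor d/2\rfloor$ for \Cref{lem:harmonic-radial}, which holds since $r\le k\le q_0d$.

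\emph{Dimension and degree.} Each product $f\,p_r(S_d)$ has degree at most $r+(k-r)=k$. By \Cref{lem:harmonic-radial} with $H=p_r^2$, the map $f\mapsto f\,p_r(S_d)$ is an isometry, so it is injective. The spaces $W_r$ are mutually orthogonal. Summing \eqref{eq:harmonic-dims} therefore telescopes to $\dim W_{\le R}=\binom dR$.

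\emph{Common set and leakage.} Take $E=\{S_d\ge u\}$, which is permutation-invariant, with
\[
u=\tau_R-M\sqrt d\,K^{-1/6}.
\]
The first task is to show that $\tau_r=2\sqrt{(k-r)(d-k-r)}$ is decreasing in $r$, so $\tau_r\ge\tau_R$ for all $r\le R$. By \Cref{lem:harmonic-radial} with $H=p_r^2\1_{\{\cdot<u\}}$, the leakage of every element of $W_r$ equals $\E[p_r(S_{D_r})^2\1_{\{S_{D_r}<u\}}]$. This is at most
\[
\frac{\|(S_{D_r}-\tau_r)p_r\|_2^2}{(\tau_r-u)^2}\le \frac{C\,D_r\,\ell_r^{-1/3}}{M^2 d\,K^{-1/3}}\le\rho
\]
for $M=M_{\rho,q_0}$ large, using $D_r\le d$ and $\ell_r\ge K$. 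Cross terms between distinct $W_r$ vanish both in $\E[h^2]$ and in $\E[h^2\1_{E^c}]$, again by \Cref{lem:harmonic-radial}. Summing the per-degree bounds therefore gives the leakage bound on all of $W_{\le R}$.

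\emph{Small-$K$ case.} \Cref{lem:window} requires $\ell_r$ large. When $K$ is below some threshold $K_0(\rho,q_0)$, we instead take the trivial choice $E=\{-1,1\}^d$, so $\mu_d(E)=1$. We then need $\mathcal E_{d,k}(R)\le CK^{1/3}$, which is false in general. The fix is to choose $p_r$ by a simpler device when $K$ is bounded. For example, use $p_r=e_{\ell_r}$, the single top Krawtchouk element, for which $\|(S-\tau_r)p_r\|_2\lesssim\sqrt{d}$. The cutoff is then moved down by $M\sqrt d$, and since $K$ is bounded, the resulting exponent loss is $O(1)\cdot\sqrt{K}$. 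I expect this bookkeeping to be routine but fiddly.

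\emph{Probability bound.} Apply the Chernoff bound $\mu_d(S_d\ge u)\le e^{-dI(u/d)}$. As in \eqref{eq:scalar-rate-loss},
\[
dI(\tau_R/d)-dI(u/d)\le(\tau_R-u)\,\artanh(\tau_R/d)\le M\sqrt d\,K^{-1/6}\,C_{q_0}\sqrt{k/d}.
\]
This gives the error $C\,k^{1/2}K^{-1/6}$, not $K^{1/3}$ as stated. This is fine when $R\le k/2$, since then $k\asymp K$. When $R>k/2$, bound instead $\artanh(\tau_R/d)\le C\tau_R/d\le C\sqrt{K/d}$, using $(k-R)(d-k-R)\le Kd$. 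This yields exactly $C\,K^{1/2}K^{-1/6}=CK^{1/3}$ in all cases. So the main bound holds with $\mathcal E_{d,k}(R)=dI(\tau_R/d)$.

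\emph{The consequence \eqref{eq:harmonic-cost}.} Here I expect the main obstacle: showing $\Lambda_{d,k}-\mathcal E_{d,k}(R)\le C_{q_0}R$. Write
\[
\Lambda_{d,k}-\mathcal E_{d,k}(R)=d\int_{\tau_R/d}^{\tau_0/d}\artanh z\,dz\le(\tau_0-\tau_R)\,\artanh(\tau_0/d).
\]
Bound $\tau_0-\tau_R=(\tau_0^2-\tau_R^2)/(\tau_0+\tau_R)$. The numerator satisfies $\tau_0^2-\tau_R^2=4R(d-R)\le 4Rd$, and the denominator satisfies $\tau_0+\tau_R\ge\tau_0\ge2\sqrt{(1-q_0)kd}$. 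Hence $\tau_0-\tau_R\le C R\sqrt{d/k}$. Multiplying by $\artanh(\tau_0/d)\le C\sqrt{k/d}$ gives $\le C_{q_0}R$. Combining with $K^{1/3}\le k^{1/3}$ then yields \eqref{eq:harmonic-cost}.
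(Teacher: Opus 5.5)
\paragraph{Case $K\ge K_0$ and the consequence.} These parts of your proposal are correct and follow the paper's argument:
\begin{itemize}
\item radial factors come from the one-dimensional construction on cubes of dimension $d-2r$;
\item the common cutoff $u=\tau_R-M\sqrt d\,K^{-1/6}$ lies below every sector's cutoff because $\tau_r$ decreases in $r$;
\item cross terms vanish by \Cref{lem:harmonic-radial};
\item the Chernoff rate loss is bounded by the $\artanh$ integral with $\tau_R/d\le 2\sqrt{K/d}$.
\end{itemize}
For \eqref{eq:harmonic-cost} your route differs slightly. You bound $\Lambda_{d,k}-\mathcal E_{d,k}(R)$ by $(\tau_0-\tau_R)\artanh(\tau_0/d)$ using $\tau_0^2-\tau_R^2=4R(d-R)$, whereas the paper bounds $|\tfrac{d}{dR}\mathcal E_{d,k}(R)|$. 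Both give $C_{q_0}R$.

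One small omission: you should choose $K_0$ so that $u>0$, since the Chernoff bound is used in the upper-tail range. The paper requires $MK_0^{-2/3}<2\sqrt{1-2q_0}$, using $\tau_R\ge 2\sqrt{(1-2q_0)dK}$.

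\paragraph{The gap: the small-$K$ case.} This case rests on a false claim. You say $\mathcal E_{d,k}(R)\le CK^{1/3}$ is false in general, but it holds. Since $d-k-R\le d-K$, we have $\tau_R\le 2\sqrt{K(d-K)}$. Monotonicity of $I$ then gives $\mathcal E_{d,k}(R)\le d\Psi(K/d)\le 2K$. Your own integral estimate gives the same: $dI(\tau_R/d)\le\tau_R\artanh(\tau_R/d)\le C_{q_0}\tau_R^2/d\le C_{q_0}K$. Hence:
\begin{itemize}
\item for $1\le K<K_0$, $\mathcal E_{d,k}(R)\le 2K_0^{2/3}K^{1/3}$;
\item for $K=0$, $\tau_R=0$ and $\mathcal E_{d,k}(R)=0$.
\end{itemize}
So the trivial choice works. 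Take $E=\{-1,1\}^d$, which has zero leakage and $\mu_d(E)=1\le e^{-\mathcal E_{d,k}(R)+CK^{1/3}}$. Pair it with any $\binom dR$-dimensional subspace of $\cP_{d,k}$, such as the span of the degree-$R$ Walsh characters (the paper's case (ii)) or $\bigoplus_{r\le R}\cH_{d,r}$.

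\paragraph{Why your replacement fails as written.} First, when $K$ is bounded but $R$ is close to $k$, the sectors with small $r$ have large degree $\ell_r=k-r$. There the residual satisfies $\|(S_{D_r}-\tau_r)e_{\ell_r}\|_2^2=a_{\ell_r}^2+a_{\ell_r-1}^2+\tau_r^2\asymp \ell_r D_r$, not $O(d)$. Second, $e_\ell^2$ is an even function of $S_D$, so for $\rho<1/2$ no positive cutoff gives leakage at most $\rho$ in any sector using $e_{\ell_r}$.

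Your sketch therefore forces $u<0$. Then $\mu_d(E)\ge 1/2$, and the shifted Chernoff estimate no longer applies. To conclude, you would need exactly the bound $\mathcal E_{d,k}(R)\le CK^{1/3}$ that you dismissed.
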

\begin{proof}
Write $M=M_{\rho,q_0}$ from \Cref{lem:scalar-witness}. Fix an integer $K_0=K_0(\rho,q_0)\ge1$ above that lemma's degree threshold and large enough that $M K_0^{-2/3}<2\sqrt{1-2q_0}$.

\emph{Case (i): $K\ge K_0$.}

\emph{Step 1: choosing a common ball.}
For $0\le r\le R$, put $D_r=d-2r$ and $K_r=k-r$. Since $K_r\ge K_0$ and $K_r/D_r\le k/d\le q_0$, \Cref{lem:scalar-witness} provides a polynomial $p_r$ of degree $K_r$, with unit norm under $S_{D_r}$ and leakage at most $\rho$ below
\[
 u_r=2\sqrt{(k-r)(d-k-r)}-M\sqrt{D_r}\,K_r^{-1/6}.
\]
Set $u=\tau_R-M\sqrt d\,K^{-1/6}$ and $E=\{S_d\ge u\}$. The centers decrease with $r$, while $\sqrt{D_r}K_r^{-1/6}\le\sqrt dK^{-1/6}$, so $u\le u_r$ for all $r$. Moreover, $\tau_R\ge2\sqrt{(1-2q_0)dK}$; our choice of $K_0$ therefore ensures $0<u<\tau_R<d$.

\emph{Step 2: combining directions without increasing leakage.}
Define $W_r=\{f p_r(S_d):f\in\cH_{d,r}\}$, whose elements have degree at most $r+K_r=k$. By \Cref{lem:harmonic-radial}, multiplication by $p_r(S_d)$ preserves norms, and $\E[h_r^2\1_{E^c}]\le\rho\|h_r\|_2^2$ for all $h_r\in W_r$.
 
To extend this bound to $h=\sum_r h_r$, we must also control the cross terms in both $\|h\|_2^2$ and $\E[h^2\1_{E^c}]$. For $r\ne s$, apply \Cref{lem:harmonic-radial} with the weights $H(t)=p_r(t)p_s(t)$ and $H(t)=p_r(t)p_s(t)\1_{\{t<u\}}$, respectively.
This yields
\(
 \E[h_rh_s]=\E[h_rh_s\1_{E^c}]=0.
\)
Hence, by \eqref{eq:harmonic-dims}, $W_{\le R}=\bigoplus_{r=0}^R W_r$ has dimension
$\sum_{r=0}^R[\binom dr-\binom d{r-1}]=\binom dR$, and
\[
 \E[h^2\1_{E^c}]
 =\sum_r\E[h_r^2\1_{E^c}]
 \le\rho\sum_r\|h_r\|_2^2=\rho\|h\|_2^2.
\]

\emph{Step 3: bounding the probability of the ball.}
Use the integral estimate in \eqref{eq:scalar-rate-loss}, now with $D=d$ and center $\tau_R$. Since $\tau_R/d\le2\sqrt{q_0(1-q_0)}<1$ and $I'(\tau_R/d)\le C_{q_0}\sqrt{K/d}$, it gives
\[
 \mathcal E_{d,k}(R)-dI(u/d)
 \le(\tau_R-u)I'(\tau_R/d)
 \le C_{\rho,q_0}\sqrt d\,K^{-1/6}\sqrt{K/d}
 =C_{\rho,q_0}K^{1/3}.
\]
The binomial Chernoff bound (see, e.g., \cite[Section~2.2]{BLM13}) $\mu_d(E)\le e^{-dI(u/d)}$ now proves the asserted probability bound.

\emph{Case (ii): $0\le K<K_0$.}
Take $E$ to be the whole cube and $W_{\le R}$ to be the span of all degree-$R$ Walsh characters. This space has dimension $\binom dR$, lies in $\cP_{d,k}$, and has zero leakage. To check the probability bound, note that $\tau_R\le2\sqrt{K(d-K)}$ and $\Psi(q)\le2q$ give $0\le\mathcal E_{d,k}(R)\le d\Psi(K/d)\le2K$. For $1\le K<K_0$, choosing $C_{\rho,q_0}\ge2K_0^{2/3}$ makes $-\mathcal E_{d,k}(R)+C_{\rho,q_0}K^{1/3}\ge0$, so the claimed upper bound is at least $\mu_d(E)=1$. For $K=0$, both exponents are zero.

Finally, for $z_R=\tau_R/d$ and by continuity at $R=k$,
\[
 \left|\frac{d}{dR}\mathcal E_{d,k}(R)\right|
 =2(1-2R/d)\frac{\artanh z_R}{z_R}\le C_{q_0}.
\]
Integrating gives $\mathcal E_{d,k}(R)\ge \Lambda_{d,k}-C_{q_0}R$, which proves \eqref{eq:harmonic-cost}.
\end{proof}

\subsection{Proof of \Cref{thm:common}}
\begin{proof}
\emph{Step 1: the universal lower bound on $\mu_d(E)$.}
Let $h\ne0$ satisfy $\E[h^2\1_{E^c}]\le\rho\|h\|_2^2$ and rescale it to $\|h\|_2=1$. Set $\eta=(1+\rho)/2>\rho$. By \Cref{prop:Tupper}, clipping at $B=\exp\{\Lambda_{d,k}+C_{\eta,q_0}k^{1/3}\}$ retains at least $\eta$ energy. Clipping leaves at most $\rho$ energy outside $E$, while its contribution on $E$ is at most $B\mu_d(E)$. Hence
\[
 \eta\le\E[h^2\wedge B]
 \le\E[h^2\1_{E^c}]+B\mu_d(E)
 \le\rho+B\mu_d(E).
\]
It follows that $\mu_d(E)\ge(1-\rho)/(2B)$, giving the asserted bound after absorbing the fixed prefactor into $C_{\rho,q_0}$.

\emph{Step 2: a subspace attaining the bound up to the remainder.}
Apply \Cref{prop:harmonic-lift} with $R=\lfloor k^{1/3}\rfloor$. It supplies a space of dimension $\binom dR=\Mhard$ and one event $E$ with leakage at most $\rho$ for every function in that space. Finally,
\eqref{eq:harmonic-cost} gives
\[
 \mu_d(E)\le\exp\{-\Lambda_{d,k}+C_{\rho,q_0}(R+k^{1/3})\}
 \le\exp\{-\Lambda_{d,k}+C'_{\rho,q_0}k^{1/3}\}.
\]
\end{proof}

\begin{remark}[Shared small set is a Hamming ball]
For sufficiently large $k$, the choice $R=\lfloor k^{1/3}\rfloor$ gives $K=k-R\ge K_0(\rho,q_0)$. We therefore land in case (i) of the proof of \Cref{prop:harmonic-lift}, which constructs $E=\{S_d\ge u\}$ with $u>0$, rather than the whole-cube fallback of case (ii). This event is a Hamming ball, establishing the final assertion of \Cref{thm:common}.
\end{remark}

\section{Empirical eigenvalues and the sampling theorem}\label{sec:statistics-main}\label{sec:assembly}

The following lemma shows how joint energy concentration on a common small set forces many empirical eigenvalues to be small when too few observations fall in that set.

\begin{lemma}[Many small empirical eigenvalues]\label[lemma]{lem:rare-cov}
Let $p=\mu_d(E)\in(0,1/2]$. 

\smallskip
\noindent\textup{(i)} If $\E[\Phi(X)\Phi(X)^\top\1_{\{X\notin E\}}]\preceq\rho I_m$, $m\ge2$, and $n\le m/(128p)$, then with probability at least $15/16$, at least $\lfloor m/2\rfloor$ eigenvalues of $\hSigma$ are at most $128\rho$. 

\smallskip
\noindent\textup{(ii)} If $t\ge\log4$, $n\le t/(512p)$, and a fixed unit vector $v$ satisfies $\E[(v^\top\Phi)^2\1_{E^c}]\le\rho$, then with probability at least $(15/16)e^{-t/256}$ the smallest eigenvalue of $\hSigma$ is at most $32\rho$.
\end{lemma}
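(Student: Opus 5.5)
Both parts rest on the decomposition $\hSigma=\hSigma_E+\hSigma_{E^c}$ from the technical overview, combined with the Courant--Fischer min--max principle. Let $N=\#\{i:X_i\in E\}$, which is distributed as $\mathrm{Bin}(n,p)$. The matrix $\hSigma_E$ is a sum of $N$ rank-one terms, so it vanishes on a subspace $U=\ker\hSigma_E$ of dimension at least $m-N$. On $U$ we have $\hSigma=\hSigma_{E^c}$.

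For part (i), I will control two events and then intersect them.

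First, $\E N=np\le m/128$. By Markov's inequality, $\Pp(N\ge m/4)\le 1/32$.

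Second, $\E\Tr\hSigma_{E^c}=\Tr\E[\Phi\Phi^\top\1_{E^c}]\le m\rho$. By Markov, $\Pp(\Tr\hSigma_{E^c}>32m\rho)\le1/32$.

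Together these give failure probability at most $1/16$. On the good event, $\dim U\ge 3m/4$. Let $P_U$ be the compression of $\hSigma_{E^c}$ to $U$. It is positive semidefinite with trace at most $32m\rho$. Hence at most $32m\rho/(128\rho)=m/4$ of its eigenvalues exceed $128\rho$. So $U$ contains a subspace $U'$ of dimension at least $3m/4-m/4=m/2$ on which the Rayleigh quotient of $\hSigma_{E^c}$, and therefore that of $\hSigma$, is at most $128\rho$. By min--max, $\lambda_j(\hSigma)\le128\rho$ for $j\le\dim U'$, which gives at least $\lfloor m/2\rfloor$ such eigenvalues. The edge case $\rho=0$ should be handled by applying the same argument with strict counting, or by reading the bound as eigenvalues equal to zero. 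Integer rounding is the only care needed, and $m\ge2$ guarantees $\lfloor m/2\rfloor\ge1$.

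For part (ii), I will use the event $\{N=0\}$, on which $\hSigma_E=0$. Its probability is $(1-p)^n\ge e^{-2pn}$, using $p\le1/2$, so that $\log(1-p)\ge-2p$. Since $n\le t/(512p)$, this is at least $e^{-t/256}$.

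Conditionally on $N=0$, the inputs are i.i.d. from $\mu_d$ restricted to $E^c$. Then
\[
\E\bigl[v^\top\hSigma v\mid N=0\bigr]=\frac{\E[(v^\top\Phi)^2\1_{E^c}]}{1-p}\le2\rho .
\]
Markov's inequality gives $v^\top\hSigma v\le32\rho$ with conditional probability at least $15/16$. Then $\lambda_1\le v^\top\hSigma v\le32\rho$. Multiplying the two probabilities yields $(15/16)e^{-t/256}$. The hypothesis $t\ge\log4$ is not needed for this argument; presumably it is used downstream.

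The main obstacle is the bookkeeping in part (i): the eigenvalue-counting step on the compressed matrix, together with checking that the Markov constants, $1/32+1/32$ and the thresholds $m/4$ and $128\rho$, combine to give exactly $\lfloor m/2\rfloor$. If the $m/4$ threshold leaves slack issues for small $m$, I would instead take $\Pp(N\ge m/4)\le 4np/m\le1/32$ and round $\dim U\ge m-\lfloor m/4\rfloor$ accordingly.
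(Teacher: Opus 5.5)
Your proposal is correct and follows the paper's proof essentially step for step. It uses the same split $\hSigma=\hSigma_E+\hSigma_{E^c}$, the same two Markov bounds at level $1/32$, the compression of $\hSigma_{E^c}$ to $\ker\hSigma_E$ with trace counting and the min--max principle for (i), and, for (ii), conditioning on no sample in $E$ with $(1-p)^n\ge e^{-2np}$ followed by a conditional Markov bound. Your rounding also works: on $\{N<m/4\}$ you get $\dim\ker\hSigma_E>3m/4$ and at most $\lfloor m/4\rfloor$ large eigenvalues. You are right that $t\ge\log4$ is not used in this lemma; it enters only downstream.
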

\begin{proof}
First, we split the empirical covariance according to where each observation falls:
\[
 \hSigma=\hSigma_E+\hSigma_{E^c},\qquad
 \hSigma_A=\frac1n\sum_{i=1}^n
 \Phi(X_i)\Phi(X_i)^\top\1_{\{X_i\in A\}},\quad A\in\{E,E^c\}.
\]
Both matrices are positive semidefinite.

\emph{Part (i): many small eigenvalues.}
Let $N_E=\sum_i\1_{\{X_i\in E\}}$. Then $\operatorname{rank}(\hSigma_E)\le N_E$, $\E[N_E]=np\le m/128$, and $\E[\Tr\widehat\Sigma_{E^c}]=\Tr\!\left(\E[\Phi(X)\Phi(X)^\top\1_{\{X\notin E\}}]\right)\le\Tr(\rho I_m)=m\rho$ by the matrix inequality assumption.

Since \(\E[N_E]=np\), Markov’s inequality gives
\[
\Pp\{N_E>m/4\}\le\frac{4np}{m},
\qquad
\Pp\{\Tr\widehat\Sigma_{E^c}>32m\rho\}
\le\frac1{32}.
\]
Thus a union bound yields
\[\Pp\{N_E\le m/4 ~\wedge~ \Tr\widehat\Sigma_{E^c}\le32m\rho\}\ge1-\Pp\{N_E>m/4\}-\Pp\{\Tr\widehat\Sigma_{E^c}>32m\rho\}\ge1-\frac1{32}-\frac1{32}=\frac{15}{16}.\]

Condition on the event on the left-hand side and set $L=\ker\hSigma_E$, so that $\dim L\ge m-\lfloor m/4\rfloor$. Let $P_L$ be orthogonal projection onto $L$. Consider the operator
$B_L=P_L\hSigma_{E^c}P_L$. It has the same quadratic form as $\hSigma_{E^c}$ on vectors in $L$, and $\Tr B_L\le\Tr\hSigma_{E^c}\le32m\rho$. Since its eigenvalues are nonnegative, at most $\lfloor m/4\rfloor$ can exceed $128\rho$. Since $B_L$ is symmetric and maps $L$ into itself, $L$ has an orthonormal basis of eigenvectors of $B_L$. Let $U$ be the span of those basis vectors whose eigenvalues are at most $128\rho$. Then
$
 \dim U\ge\dim L-\lfloor m/4\rfloor
 \ge\lfloor m/2\rfloor.
$

For $0\ne w\in U$,
\[
 \mathcal R_{\hSigma}(w)
 =\frac{w^\top\hSigma_{E^c}w}{\|w\|_2^2}
 =\frac{w^\top B_Lw}{\|w\|_2^2}\le128\rho,
\]
because $\hSigma_Ew=0$. The min--max principle now gives the claimed number of small eigenvalues of $\hSigma$.

\emph{Part (ii): one small eigenvalue.}
Fix a unit vector $v$ satisfying the assumption in (ii) and condition on $N_E=0$. Independence gives
\[
 \Pr(N_E=0)=(1-p)^n\ge e^{-2np}\ge e^{-t/256},
 \qquad
 \E[v^\top\hSigma v\mid N_E=0]
 =\frac{\E[(v^\top\Phi)^2\1_{E^c}]}{1-p}\le2\rho.
\]
Markov's inequality gives $v^\top\hSigma v\le32\rho$ with conditional probability at least $15/16$. Since $\lambda_{\min}(\hSigma)\le\mathcal R_{\hSigma}(v)=v^\top\hSigma v$, the unconditional probability is at least $(15/16)e^{-t/256}$.
\end{proof}

\subsection{Proof of \Cref{thm:main}}
\begin{proof}
Write $N=\Npar(d,k,m,A,\delta)$.

\emph{Step 1: upper bound.}
Apply \Cref{prop:Tupper} with a fixed retained fraction greater than $1/2$, followed by \Cref{prop:clip-cov} with $c=1/2$ and failure probability $\delta/2$. On the event $\{\widehat\Sigma_V\succeq I_m/2\}$, conditionally on $X_1,\ldots,X_n$, the ordinary least-squares estimator satisfies
\[
 \widehat\theta_{\mathrm{OLS}}-\theta
 \sim N\!\left(0,\frac{\sigma^2}{n}\widehat\Sigma_V^{-1}\right),
 \qquad
 \|\widehat\theta_{\mathrm{OLS}}-\theta\|_2^2
 \preceq_{\rm st}\frac{2\sigma^2}{n}\chi_m^2.
\]
\Cref{cor:ols} therefore gives the required error bound with constant $12\le A$. Thus
\[
 N\le C(m+t)e^{\Lambda_{d,k}+Ck^{1/3}}
\]
for every feasible $m$.

\emph{Step 2: lower bound.}
Choose the leakage required in \Cref{prop:rare-par}. For sufficiently large $k$, \Cref{thm:common} supplies a common concentration set of probability
\[
 0<p\le e^{-\Lambda_{d,k}+Ck^{1/3}}\le1/2.
\]
If $m\le\Mhard$, take an $m$-dimensional subspace of the concentrating space. If $t\ge m$, extend one concentrated unit vector to a feasible $m$-dimensional space. \Cref{prop:rare-par} rules out the required regression guarantee at every positive integer $n\le(m+t)/(1024p)$.

Consequently,
\[
 N\ge\frac{m+t}{1024p}
 \ge\frac{m+t}{1024}e^{\Lambda_{d,k}-Ck^{1/3}}.
\]
Here the first inequality also holds when its right-hand side is below one, since $N\ge1$. For bounded $k$, \Cref{lem:baseline} gives $N\ge c_0(m+t)$, which suffices because $\Lambda_{d,k}\le2k$. Taking logarithms and absorbing fixed constants into $Ck^{1/3}$ completes the proof.
\end{proof}

\section{Sharpness of the uncertainty remainder}\label{sec:airy}
We prove \Cref{prop:airy-sharp} by constructing a sequence of dimensions $d_k\ge k^3$ and subspaces of dimension $\binom{d_k}{\lfloor k^{1/3}\rfloor}$ such that every nonzero function in each subspace has at least a fixed fraction $\beta>0$ of its energy on a common set of probability at most $e^{-\Lambda_{d_k,k}-2k^{1/3}}$.
Thus the leakage is the fixed value $\rho_*=1-\beta$. Unlike the construction of Section~\ref{sec:window}, this argument does not allow an arbitrarily prescribed leakage.

The proof has three stages. We first construct a polynomial of degree at most \(N-1\) whose energy under the standard Gaussian measure has a fixed positive fraction concentrated on \(\{x>2\sqrt N\}\). For each sufficiently large degree $k$, we transfer that polynomial to a sufficiently large Boolean cube and apply the harmonic lift. Finally, a tail bound shows that the common concentrating set has the required probability.

\subsection{Convergence and limits}
Two different limits underlie the construction. We describe them below.

\medskip
\noindent\textbf{From the cube to Gaussian polynomials: fix the degree, let the ambient dimension grow.} Write the normalized radial Krawtchouk polynomial as
\[
 e_{D,j}(x)=\binom Dj^{-1/2}\sum_{|A|=j}\prod_{i\in A}x_i
           =P_{D,j}(S_D(x)/\sqrt D).
\]
For every fixed $j$, the coefficients of the univariate polynomial $P_{D,j}$ converge as $D\to\infty$ to those of $H_j(t)=\operatorname{He}_j(t)/\sqrt{j!}$, the degree-$j$ orthonormal Hermite polynomial for the standard Gaussian distribution. In particular, $P_{D,j}(t)\to H_j(t)$ uniformly on bounded intervals of $t$. This explains why Gaussian polynomials are useful models for radial polynomials on large cubes. Our passage from the Gaussian construction to the cube only needs the central limit theorem and moment convergence for a fixed polynomial.

\medskip
\noindent\textbf{From Hermite polynomials to the Airy kernel as the degree cutoff increases.} We combine Hermite degrees $0,\ldots,N-1$ into the kernel
$K_N$ defined below. The external asymptotic theorem we use evaluates this kernel at points
\[
 x_N(s)=2\sqrt N+sN^{-1/6},
\]
where $s$ remains in a fixed bounded interval as $N\to\infty$.
After multiplying the kernel by $N^{-1/6}$, its limit is the \emph{Airy kernel} in \eqref{eq:airy-limit}. Here it describes the limiting behavior of the Hermite kernel near the edge of its concentration region.

\subsection{A Gaussian polynomial with a fixed amount of tail energy}
\paragraph{Constructing a concentrated polynomial with the Hermite kernel.}
Let $\gamma=N(0,1)$ have density
$\varphi(x)=(2\pi)^{-1/2}e^{-x^2/2}$, and define
\[
 H_j(x)=\frac{\operatorname{He}_j(x)}{\sqrt{j!}},
 \qquad
 \operatorname{He}_j(x)=(-1)^j e^{x^2/2}
       \frac{d^j}{dx^j}e^{-x^2/2}.
\]
The $H_j$ are orthonormal in $L_2(\gamma)$. Set $\psi_j=H_j\sqrt{\varphi}$ and define the Hermite kernel
\[
 K_N(x,y)=\sum_{j=0}^{N-1}\psi_j(x)\psi_j(y).
\]
The Airy function and its associated kernel are defined, for real $s,t$, by
\[
 \operatorname{Ai}(s)
 =\frac1\pi\lim_{R\to\infty}\int_0^R\cos\!\left(\frac{u^3}{3}+su\right)\,du,
 \qquad
 K_{\mathrm{Ai}}(s,t)
 =\int_0^\infty\operatorname{Ai}(s+u)\operatorname{Ai}(t+u)\,du.
\]
To describe the Hermite kernel near $2\sqrt N$, put
\(
 x_N(s)=2\sqrt N+sN^{-1/6}.
\)
Then
\begin{equation}\label{eq:airy-limit}
 N^{-1/6}K_N(x_N(s),x_N(t))
 \longrightarrow K_{\mathrm{Ai}}(s,t)
\end{equation}
as $N\to\infty$, uniformly for $(s,t)$ in compact subsets of $\R^2$. This convergence is the Gaussian specialization of \cite[Theorem~1.1, $\beta=2$]{DG07}.

We center a kernel vector at $y_N=x_N(3)$, slightly beyond $2\sqrt N$. This is the standard reproducing-kernel construction: the coefficients are the basis values at the point where we want concentration. For $K=N-1$, define
\begin{equation*}
 q_K(x)=
 \frac{\sum_{j=0}^{K}H_j(x)H_j(y_N)}
      {(\sum_{j=0}^{K}H_j(y_N)^2)^{1/2}}.
\end{equation*}
Orthonormality gives $\deg q_K\le K$ and $\int q_K^2\,d\gamma=1$.
Moreover,
\[
 q_K(x)^2\varphi(x)
 =\frac{K_N(x,y_N)^2}{K_N(y_N,y_N)}.
\]
\paragraph{Concentrating a fixed fraction of energy on a scaled interval.}

We estimate the energy on the interval $[2\sqrt N+3N^{-1/6},\,2\sqrt N+4N^{-1/6}]$. Under the change of variables $x=2\sqrt N+sN^{-1/6}$, this becomes $s\in[3,4]$, where the uniform convergence in \eqref{eq:airy-limit} lets us pass to the limit inside the integral. The left endpoint is chosen far enough above $2\sqrt N$ to accommodate the degree reserved for the harmonic lift below. Any fixed right endpoint larger than $3$ would work. A positive lower bound on the energy in this interval suffices, since the interval lies within the tail event we will use.

To track the scaling, write $\widetilde K_N(s,t)=N^{-1/6}K_N(x_N(s),x_N(t))$. Since $dx=N^{-1/6}ds$, the kernel factors cancel and yield
\begin{equation}\label{eq:airy-mass}
 \int_{x_N(3)}^{x_N(4)}q_{N-1}(x)^2\,d\gamma(x)
 =\frac{\int_3^4\widetilde K_N(s,3)^2\,ds}
         {\widetilde K_N(3,3)}
 \xrightarrow[N\to\infty]{}
 \alpha:=\frac{\int_3^4K_{\mathrm{Ai}}(s,3)^2\,ds}
                  {K_{\mathrm{Ai}}(3,3)}>0.
\end{equation}
Indeed, $\operatorname{Ai}(s)>0$ for $s\ge0$, so the denominator and integrand in the limit are positive. Also $\alpha\le1$, since each $q_{N-1}$ has unit norm. Fix $\beta=\alpha/8>0$, independently of all subsequent degrees and dimensions. By convergence to $\alpha$, there is $N_0$ such that for every $N\ge N_0$ the left side of \eqref{eq:airy-mass} is at least $\alpha/2=4\beta$. The factor four leaves room for approximation errors in passing to the cube and comparing norms.

\subsection{Proof of \Cref{prop:airy-sharp}}
We can now transfer the construction to a Boolean subspace.
\begin{proof}
\emph{Step 1: reserving enough degree for the harmonic directions.}
For each sufficiently large integer $k$, set
\[
 R=\lfloor k^{1/3}\rfloor,\qquad K=k-R,\qquad N=K+1,\qquad
 a_k=2\sqrt k+k^{-1/6}.
\]
As $k\to\infty$, $N=k-\lfloor k^{1/3}\rfloor+1\to\infty$, so eventually $N\ge N_0$. We need the interval used above to lie beyond $a_k$, which is measured relative to the larger degree $k$.
For such $k$, $N\le k$ and
\[
 2(\sqrt k-\sqrt N)
 =\frac{2(R-1)}{\sqrt k+\sqrt N}\le2k^{-1/6},
\qquad 3N^{-1/6}\ge3k^{-1/6}.
\]
Hence $x_N(3)\ge a_k$, and \eqref{eq:airy-mass} implies that $\int q_K^2\1_{[a_k,\infty)}\,d\gamma\ge4\beta$.

\emph{Step 2: transfer to the cube with $k$ fixed.}
Fix one such $k$, and hence fix $R,K,N,a_k$ and the polynomial $q_K$. We now let the ambient dimension $d$ tend to infinity. For each $r\in\{0,\ldots,R\}$, the radial-weight identity
(Lemma~\ref{lem:harmonic-radial}) expresses energy in harmonic degree $r$ as an expectation over a cube with $d-2r$ coordinates. Accordingly, define the total energy and the energy above $a_k$ by
\[
 c_{r,d}=\E_{\mu_{d-2r}}[q_K(S_{d-2r}/\sqrt d)^2],
 \qquad
 b_{r,d}=\E_{\mu_{d-2r}}\!
 \left[q_K(S_{d-2r}/\sqrt d)^2
       \1_{\{S_{d-2r}/\sqrt d\ge a_k\}}\right].
\]

For each fixed $r$, the central limit theorem gives $S_{d-2r}/\sqrt d\rightarrow G$ in distribution, where $G\sim N(0,1)$. To obtain convergence of the expectations above, we must also control the growth of $q_K^2$. Since $q_K$ has fixed degree $K$ and
\[
 \sup_{d>2r}\E[|S_{d-2r}/\sqrt d|^{2K+2}]<\infty,
\]
the contribution to either expectation from $|S_{d-2r}/\sqrt d|>M$ tends to zero uniformly in $d$ as $M\to\infty$. Together with $\Pp(G=a_k)=0$, this gives
\[
 c_{r,d}\xrightarrow[d\to\infty]{}\E[q_K(G)^2]=1,
 \qquad
 b_{r,d}\xrightarrow[d\to\infty]{}
 \E[q_K(G)^2\1_{\{G\ge a_k\}}]\ge4\beta.
\]
These limits hold with $k$ fixed; no uniform convergence as $k$ grows is needed. Since there are only $R+1$ values of $r$, we may choose $d=d_k\ge k^3$ sufficiently large that
\[
 \frac12\le c_{r,d}\le2,
 \qquad b_{r,d}\ge2\beta
 \qquad\text{for every }r\in\{0,\ldots,R\}.
\]

\emph{Step 3: lift the one-dimensional witness to many directions.}
On the chosen cube, let $E=\{S_d/\sqrt d\ge a_k\}$ and
\[
 W_r=q_K(S_d/\sqrt d)\cH_{d,r},\qquad
 W=\bigoplus_{r=0}^{R}W_r.
\]
The degree of each product is at most $K+r\le k$. By \Cref{lem:harmonic-radial}, multiplication on $\cH_{d,r}$ changes the squared norm by the positive factor $c_{r,d}$, so it is injective. For a nonzero $h_r=fq_K(S_d/\sqrt d)\in W_r$, the same identity gives
\[
 \frac{\E[h_r^2\1_E]}{\E[h_r^2]}
 =\frac{b_{r,d}\|f\|_2^2}{c_{r,d}\|f\|_2^2}
 \ge\frac{2\beta}{2}=\beta.
\]
Different $W_r$ are orthogonal both globally and on $E$, by the same radial-weight identity. Thus
\begin{equation}\label{eq:airy-common}
 \dim W=\sum_{r=0}^{R}\left(\binom dr-\binom d{r-1}\right)=\binom dR,
 \qquad \E[h^2\1_E]\ge\beta \E[h^2]\quad(h\in W).
\end{equation}
This proves the common-event assertion for every linear combination.

\emph{Step 4: bound the common event's probability.}
The standard Chernoff argument uses $\E[e^{\lambda S_d/\sqrt d}]=\cosh(\lambda/\sqrt d)^d\le e^{\lambda^2/2}$.
Applying Markov’s inequality to \(e^{\lambda S_d/\sqrt d}\) with \(\lambda=a_k\) gives
\begin{equation*}
 \mu_d(E)\le e^{-a_k^2/2}
 =\exp\!\left(-2k-2k^{1/3}-\tfrac12k^{-1/3}\right)
 \le e^{-\Lambda_{d,k}-2k^{1/3}},
\end{equation*}
using $\Psi(q)\le2q$ from \Cref{lem:curve}.
The cross term in
$a_k^2/2=(2\sqrt k+k^{-1/6})^2/2$ is $2k^{1/3}$ and hence moving the cutoff above $2\sqrt k$ yields the required correction. Take $\rho_*=1-\beta$. Restricting $W$ to any $m$-dimensional subspace in \eqref{eq:airy-common} proves the lower inequality in \cref{prop:airy-sharp}. The upper inequality follows from the impossibility direction of \Cref{thm:common}, applied with leakage $\rho_*$ and, for example, $q_0=1/4$. Indeed, since $d_k\ge k^3$, we have $k/d_k\le k^{-2}\le1/4$ for all sufficiently large $k$.
\end{proof}

For each sufficiently large $k$, the transfer works once $d$ is large enough; choose one such dimension $d_k\ge k^3$. The argument does not give an explicit upper bound on $d_k$.

This proves that a positive correction of order $k^{1/3}$ is necessary at one fixed leakage level, for all subspace dimensions in the stated range. The smooth-window construction remains essential: it works for every $k\le q_0d$ and allows any prescribed leakage $\rho>0$. By contrast, the Airy construction guarantees only that a fixed fraction $\beta>0$ of the energy lies in $E$. The remaining fraction $1-\beta$ need not be small enough to establish the regression lower bound for a prescribed benchmark constant $A\ge16$.

\section*{AI methodology}
The author used OpenAI’s ChatGPT 5.6 Sol and 6.0 Astra for the tasks described below. ChatGPT 5.6 Sol assisted with literature research, proofreading and improving the clarity of exposition. In an extended exchange with ChatGPT 6.0 Astra, the author explored how to establish uniform sharpness of the uncertainty remainder. The model eventually suggested the Airy approach and provided much of the argument underlying the proof in Section~\ref{sec:airy}. The author independently re-derived and verified these proofs and takes full responsibility for the contents of the paper.

\phantomsection

\printbibheading[heading=bibintoc,title={References}]
\printbibliography[heading=none]

@misc{ASW15,
  author = {Emmanuel Abbe and Amir Shpilka and Avi Wigderson},
  title = {{Reed--Muller codes for random erasures and errors}},
  howpublished = {\emph{Proceedings of STOC}, pp.\ 297--306, 2015. Journal version: \emph{IEEE Transactions on Information Theory} 61(10):5229--5252, 2015.},
  note = {\href{https://arxiv.org/abs/1411.4590}{arXiv:1411.4590}.},
  shorthand = {ASW15},
  sortkey = {01}
}

@misc{Bec75,
  author = {William Beckner},
  title = {{Inequalities in Fourier analysis}},
  howpublished = {\emph{Annals of Mathematics} 102(1):159--182, 1975.},
  note = {\href{https://doi.org/10.2307/1970980}{doi:10.2307/1970980}.},
  shorthand = {Bec75},
  sortkey = {02}
}

@misc{BLM13,
  author = {St\'ephane Boucheron and G\'abor Lugosi and Pascal Massart},
  title = {{Concentration Inequalities: A Nonasymptotic Theory of Independence}},
  howpublished = {Oxford University Press, 2013.},
  note = {\href{https://doi.org/10.1093/acprof:oso/9780199535255.001.0001} {doi:10.1093/acprof:oso/9780199535255.001.0001}.},
  shorthand = {BLM13},
  sortkey = {04}
}

@misc{Bon70,
  author = {Aline Bonami},
  title = {{\'Etude des coefficients de Fourier des fonctions de $L^p(G)$}},
  howpublished = {\emph{Annales de l'Institut Fourier} 20(2):335--402, 1970.},
  note = {\href{https://doi.org/10.5802/aif.357}{doi:10.5802/aif.357}.},
  shorthand = {Bon70},
  sortkey = {05}
}

@misc{Bou02,
  author = {Olivier Bousquet},
  title = {{A Bennett concentration inequality and its application to suprema of empirical processes}},
  howpublished = {\emph{Comptes Rendus Math\'ematique} 334(6):495--500, 2002.},
  note = {\href{https://doi.org/10.1016/S1631-073X(02)02292-6}{doi:10.1016/S1631-073X(02)02292-6}.},
  shorthand = {Bou02},
  sortkey = {06}
}

@misc{CDL13,
  author = {Albert Cohen and Mark A. Davenport and Dany Leviatan},
  title = {{On the stability and accuracy of least squares approximations}},
  howpublished = {\emph{Foundations of Computational Mathematics} 13:819--834, 2013.},
  note = {\href{https://arxiv.org/abs/1111.4422}{arXiv:1111.4422}.},
  shorthand = {CDL13},
  sortkey = {07}
}

@misc{DG07,
  author = {Percy Deift and Dimitri Gioev},
  title = {{Universality at the edge of the spectrum for unitary, orthogonal, and symplectic ensembles of random matrices}},
  howpublished = {\emph{Communications on Pure and Applied Mathematics} 60(6):867--910, 2007.},
  note = {\href{https://doi.org/10.1002/cpa.20164}{doi:10.1002/cpa.20164}. \href{https://arxiv.org/abs/math-ph/0507023}{arXiv:math-ph/0507023}.},
  shorthand = {DG07},
  sortkey = {09}
}

@misc{EHE24,
  author = {El Hanchi, Ayoub and Chris J. Maddison and Murat A. Erdogdu},
  title = {{Minimax linear regression under the quantile risk}},
  howpublished = {\emph{Proceedings of COLT}, PMLR 247:1516--1572, 2024.},
  note = {\href{https://proceedings.mlr.press/v247/el-hanchi24a.html}{PMLR proceedings}.},
  shorthand = {EHE24},
  sortkey = {10}
}

@misc{EI22,
  author = {Alexandros Eskenazis and Paata Ivanisvili},
  title = {{Learning low-degree functions from a logarithmic number of random queries}},
  howpublished = {\emph{Proceedings of STOC}, pp.\ 203--207, 2022.},
  note = {\href{https://arxiv.org/abs/2109.10162}{arXiv:2109.10162}.},
  shorthand = {EI22},
  sortkey = {11}
}

@misc{EIS23,
  author = {Alexandros Eskenazis and Paata Ivanisvili and Lauritz Streck},
  title = {{Low-degree learning and the metric entropy of polynomials}},
  howpublished = {\emph{Discrete Analysis}, Article 17, 2023.},
  note = {\href{https://arxiv.org/abs/2203.09659}{arXiv:2203.09659}.},
  shorthand = {EIS23},
  sortkey = {12}
}

@misc{FM19,
  author = {Yuval Filmus and Elchanan Mossel},
  title = {{Harmonicity and invariance on slices of the Boolean cube}},
  howpublished = {\emph{Probability Theory and Related Fields} 175:721--782, 2019.},
  note = {\href{https://doi.org/10.1007/s00440-019-00900-w}{doi:10.1007/s00440-019-00900-w}. \href{https://arxiv.org/pdf/1507.02713v5}{arXiv:1507.02713v5}.},
  shorthand = {FM19},
  sortkey = {14}
}

@misc{KS21,
  author = {Naomi Kirshner and Alex Samorodnitsky},
  title = {{A moment ratio bound for polynomials and some extremal properties of Krawchouk polynomials and Hamming spheres}},
  howpublished = {\emph{IEEE Transactions on Information Theory} 67(6):3509--3541, 2021.},
  note = {\href{https://arxiv.org/pdf/1909.11929v1}{arXiv:1909.11929v1}.},
  shorthand = {KS21},
  sortkey = {16}
}

@misc{Men14,
  author = {Shahar Mendelson},
  title = {{Learning without concentration}},
  howpublished = {\emph{Proceedings of COLT}, PMLR 35:25--39, 2014.},
  note = {\href{https://proceedings.mlr.press/v35/mendelson14.html}{PMLR proceedings}.},
  shorthand = {Men14},
  sortkey = {22}
}

@misc{Men21,
  author = {Shahar Mendelson},
  title = {{Extending the scope of the small-ball method}},
  howpublished = {\emph{Studia Mathematica} 256(2):147--167, 2021.},
  note = {\href{https://doi.org/10.4064/sm190420-21-11}{doi:10.4064/sm190420-21-11}. \href{https://arxiv.org/abs/1709.00843}{arXiv:1709.00843}.},
  shorthand = {Men21},
  sortkey = {23}
}

@misc{Mou22,
  author = {Jaouad Mourtada},
  title = {{Exact minimax risk for linear least squares, and the lower tail of sample covariance matrices}},
  howpublished = {\emph{Annals of Statistics} 50(4):2157--2178, 2022.},
  note = {\href{https://arxiv.org/abs/1912.10754}{arXiv:1912.10754}.},
  shorthand = {Mou22},
  sortkey = {24}
}

@misc{Oli16,
  author = {Roberto Imbuzeiro Oliveira},
  title = {{The lower tail of random quadratic forms with applications to ordinary least squares}},
  howpublished = {\emph{Probability Theory and Related Fields} 166:1175--1194, 2016.},
  note = {\href{https://arxiv.org/abs/1312.2903}{arXiv:1312.2903}.},
  shorthand = {Oli16},
  sortkey = {25}
}

@misc{PS19,
  author = {Yury Polyanskiy and Alex Samorodnitsky},
  title = {{Improved log-Sobolev inequalities, hypercontractivity and uncertainty principle on the hypercube}},
  howpublished = {\emph{Journal of Functional Analysis} 277(11):108280, 2019.},
  note = {\href{https://arxiv.org/pdf/1606.07491v3}{arXiv:1606.07491v3}.},
  shorthand = {PS19},
  sortkey = {26}
}

@misc{Sam08,
  author = {Alex Samorodnitsky},
  title = {{A modified logarithmic Sobolev inequality for the Hamming cube and some applications}},
  howpublished = {\emph{arXiv preprint}, 2008.},
  note = {\href{https://arxiv.org/abs/0807.1679}{arXiv:0807.1679}.},
  shorthand = {Sam08},
  sortkey = {27}
}

@misc{Tropp12,
  author = {Joel A. Tropp},
  title = {{User-friendly tail bounds for sums of random matrices}},
  howpublished = {\emph{Foundations of Computational Mathematics} 12(4):389--434, 2012.},
  note = {\href{https://arxiv.org/pdf/1004.4389v7}{arXiv:1004.4389v7}.},
  shorthand = {Tropp12},
  sortkey = {30}
}

@misc{Yas14,
  author = {Pavel Yaskov},
  title = {{Lower bounds on the smallest eigenvalue of a sample covariance matrix}},
  howpublished = {\emph{Electronic Communications in Probability} 19, Article 83, pp.\ 1--10, 2014.},
  note = {\href{https://doi.org/10.1214/ECP.v19-3807}{doi:10.1214/ECP.v19-3807}. \href{https://arxiv.org/abs/1409.6188}{arXiv:1409.6188}.},
  shorthand = {Yas14},
  sortkey = {31}
}

@article{DS89,
  author  = {Donoho, David L. and Stark, Philip B.},
  title   = {Uncertainty Principles and Signal Recovery},
  journal = {SIAM Journal on Applied Mathematics},
  volume  = {49},
  number  = {3},
  pages   = {906--931},
  year    = {1989},
  doi     = {10.1137/0149053}
}

@article{LP62,
  author  = {Landau, H. J. and Pollak, H. O.},
  title   = {Prolate Spheroidal Wave Functions, Fourier Analysis
             and Uncertainty---{III}: The Dimension of the Space
             of Essentially Time- and Band-Limited Signals},
  journal = {Bell System Technical Journal},
  volume  = {41},
  number  = {4},
  pages   = {1295--1336},
  year    = {1962},
  doi     = {10.1002/j.1538-7305.1962.tb03279.x}
}

@inproceedings{HKZ12,
  author    = {Hsu, Daniel and Kakade, Sham M. and Zhang, Tong},
  title     = {Random Design Analysis of Ridge Regression},
  booktitle = {Proceedings of the 25th Annual Conference on Learning Theory},
  series    = {Proceedings of Machine Learning Research},
  volume    = {23},
  pages     = {9.1--9.24},
  year      = {2012},
  url       = {https://proceedings.mlr.press/v23/hsu12.html}
}

\clearpage
\appendix
\section{Supplementary proofs for the introduction}
\subsection{Minimax risk of the toy example}\label{app:toy-example}
Recall that we consider the data model
\[
Y_i=\theta h_{d,k}(X_i)+\varepsilon_i,
\qquad
\varepsilon_i\overset{\mathrm{iid}}{\sim}N(0,\sigma^2),
\] 
Let \(h=h_{d,k}\), \(p=\mathbb P(X\in E)\le2e^{-k/2}\), and \(\mathbb E[h^2\mathbf1_{E^c}]\le\rho\). Assume \(1\le n\le e^{k/2}/8\). We apply Le Cam's two-point method to any coefficients \(\theta,\theta+u\), choosing \(u^2=\sigma^2/(8n\rho)\). Write \(F=\{X_1,\ldots,X_n\notin E\}\) and \(Q_a=\mathbb P_a(\,\cdot\mid F)\). Then
\begin{align*}
\inf_{\widehat\theta}\sup_{a\in\mathbb R}
\mathbb P_a&\!\left\{|\widehat\theta-a|^2\ge
\frac{\sigma^2}{32n\rho}\right\}
\overset{\text{(i)}}{\ge} \mathbb P(F)\,
\inf_{\widehat\theta}\max_{a\in\{\theta,\theta+u\}}
Q_a\!\left\{|\widehat\theta-a|\ge u/2\right\}
\\
&\overset{\text{(ii)}}{\ge} \frac{\mathbb P(F)}2
\left(1-\sqrt{\frac{D_{\mathrm{KL}}(Q_\theta\|Q_{\theta+u})}{2}}\right)
\overset{\text{(iii)}}{=}\frac{\mathbb P(F)}2
\left(1-\sqrt{\frac{u^2}{4\sigma^2}
\mathbb E\!\left[\sum_{i=1}^n h(X_i)^2\mid F\right]}\right)
\\
&\overset{\text{(iv)}}{\ge}\frac38
\left(1-\sqrt{\frac{nu^2\rho}{4\sigma^2(1-p)}}\right)
\overset{\text{(v)}}{\ge}\frac38\left(1-\frac14\right)
=\frac9{32}.
\end{align*}
Here (i) restricts to two targets and to \(F\); (ii) is Le Cam’s two-point bound and Pinsker’s inequality; (iii) uses the Gaussian KL formula and its chain rule; (iv) uses \(\mathbb P(F)\ge1-np\ge3/4\) and \(\mathbb E[h(X)^2\mid X\notin E]\le\rho/(1-p)\); (v) substitutes \(u^2=\sigma^2/(8n\rho)\) and uses \(p\le1/4\). 

Thus, if \(\rho<1/(32A)\), error \(A\sigma^2/n\) cannot be guaranteed at confidence \(3/4\) throughout this exponential sample range.

\subsection{Exponential cost of noise}\label{app:noise-cost}

We prove a generalization of Corollary~\ref{cor:noise-cost} using harmonic polynomials of a fixed degree $R$.

\begin{proposition}\label{prop:same-model-general}
Fix $q_0\in(0,1/2)$ and $A\ge16$. There are constants $K_0,C_{A,q_0}>0$ such that, whenever
\[
 1\le k\le q_0d,\qquad
 0\le R\le k-K_0,\qquad
 1\le m\le\binom dR-\binom d{R-1},
\]
there exists an $m$-dimensional $V\subseteq\cP_{d,k}$ with the following properties for every $t\ge\log4$:
\begin{itemize}
\item $C2^R(m+t)$ noiseless observations suffice to recover every target in $V$ exactly, with probability at least
$1-e^{-t}$.
\item Under Gaussian noise of any variance $\sigma^2>0$,
\[
 \mathcal R^*_{n,e^{-t}}(V)
 >A\sigma^2\frac{m+t}{n}
 \quad\text{whenever}\quad
 1\le n\le
 (m+t)e^{\Lambda_{d,k}-C_{A,q_0}(R+k^{1/3})}.
\]
\end{itemize}
Here $C$ is a universal constant, and $V$ is independent of $t,\sigma$.
\end{proposition}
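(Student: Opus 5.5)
The plan is to take $V=\{f\,p_R(S_d):f\in U\}$, where $U\subseteq\cH_{d,R}$ is an $m$-dimensional subspace and $p_R$ is the radial witness of \Cref{lem:scalar-witness} on the cube of dimension $d-2R$, with degree $K=k-R\ge K_0$ and a leakage $\rho$ chosen small in terms of $A$. The only freedom is the choice of $U$. Concentration and the noisy lower bound only use $f\in\cH_{d,R}$. Noiseless recovery needs $U$ to be chosen well.

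\emph{Noisy lower bound.} By \Cref{lem:harmonic-radial} and the argument in Step~1 of \Cref{prop:harmonic-lift} with a single sector $r=R$, every $h\in V$ satisfies $\E[h^2\1_{E^c}]\le\rho\E[h^2]$, with $E=\{S_d\ge u\}$ and
\[
\mu_d(E)\le e^{-\mathcal E_{d,k}(R)+CK^{1/3}}\le e^{-\Lambda_{d,k}+C(R+k^{1/3})},
\]
using the derivative bound on $\mathcal E_{d,k}$. I would then apply \Cref{prop:rare-par} exactly as in Step~2 of the proof of \Cref{thm:main}. If $m\ge2$ and $m\ge t$, I use \Cref{lem:rare-cov}(i). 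Otherwise I use part (ii) with one concentrated direction, which is available because every direction of $V$ concentrates. Combined with \Cref{thm:exact-minimax}, this rules out the rate $A\sigma^2(m+t)/n$ for $n\le c(m+t)/\mu_d(E)$. The result does not depend on $\sigma$ or $t$.

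\emph{Noiseless recovery.} Exact recovery holds iff no nonzero $h\in V$ vanishes on all $X_i$. Since $p_R(S_d)$ is a nonzero univariate polynomial of degree $K$ in $S_d$, it vanishes on at most $K$ slices. So I need $f\in U$ to be nonvanishing with substantial probability away from those slices. I plan to choose $U$ spanned by products of disjoint pairs,
\[
Q_{P}(x)=\prod_{\{i,j\}\in P}\frac{x_i-x_j}{\sqrt2},
\]
or more flexibly to take $U$ as any $m$-dimensional subspace of the span of such structured harmonics. The key fact is an anticoncentration bound: every nonzero $f\in\cH_{d,R}$, which has degree $R$, satisfies $\Pp(f(X)\ne0)\ge2^{-R}$ (Schwartz--Zippel on the cube). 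One also needs to handle the intersection with $\{p_R(S_d)\ne0\}$. For that I would use \Cref{lem:harmonic-radial}, which gives $\E[f^2 g(S_d)]=\|f\|^2\E[g(S_{d-2R})]$: the weight $f^2/\|f\|^2$ induces the law of $S_{d-2R}$ on the coordinate sum. The zero set of $p_R$ is at most $K$ slices, and each slice has probability at most $O(1/\sqrt{d})$ under that law. But $f^2$ being positive somewhere is not the same as $f$ being nonzero with probability $\gtrsim2^{-R}$, so I would instead directly bound $\Pp(h(X)\ne0)\ge c2^{-R}$ for all nonzero $h\in V$. This works if the nonvanishing set of $f$ has probability at least $2^{-R}$ and meets the slices only in proportion; I expect to need $d-2R$ large relative to $K$ for this, or else to choose $p_R$ with no integer roots of the right parity. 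Perturbing the window coefficients generically achieves the latter without affecting the residual bound of \Cref{lem:window}.

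Given $\Pp(h(X)\ne0)\ge c2^{-R}$ uniformly over $h\in V\setminus\{0\}$, I would turn it into uniform injectivity with $C2^R(m+t)$ samples. This is a standard union over a lower-dimensional argument: the empirical process of the indicators $\1\{h\ne0\}$ over a linear space of dimension $m$ has VC-type structure. Alternatively, I would use the sequential argument that each new sample reduces the dimension of the surviving kernel with probability at least $c2^{-R}$, and then a Chernoff bound on the number of successes among $n$ trials gives failure probability $e^{-t}$ once $n\ge C2^R(m+t)$. The sequential argument is cleaner, so I will use it. The main obstacle is the uniform nonvanishing bound for products $f\,p_R(S_d)$. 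Removing the zero set of the radial factor is the delicate step, and the generic choice of $p_R$ without lattice roots is what I would try first.
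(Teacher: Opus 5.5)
Your plan is correct and follows the paper's proof. The paper also takes $V=g(S_d)\,U$ with $U\subseteq\cH_{d,R}$ arbitrary, gets the noisy bound from \Cref{lem:harmonic-radial} plus \Cref{prop:rare-par}, and gets noiseless recovery from the $2^{-R}$ nonvanishing bound together with the sequential rank/binomial argument. The only difference is that the paper makes your ``no lattice roots'' step concrete: it builds the witness $w$ with leakage $\rho_A/4$ and replaces it by the normalized $w+\varepsilon$, for a small $\varepsilon$ avoiding the finitely many values $-w(s)$. Then $h\ne0$ exactly where $f\ne0$, so no special choice of $U$ or slice-proportion argument is needed.
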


\begin{proof}
\emph{Step 1: constructing a concentrated radial factor.}
Choose $\rho_A\in(0,1)$ sufficiently small for \Cref{prop:rare-par}, and put $D=d-2R$, $K=k-R$. Set $M=M_{\rho_A/4,q_0}$ from \Cref{lem:scalar-witness}.
Since $K/D\le k/d\le q_0$, \Cref{lem:scalar-witness}, with leakage $\rho_A/4$, supplies a univariate polynomial $w$ of degree at most $K$ satisfying
\[
 \E_{\mu_D}[w(S_D)^2]=1,\qquad
 \E_{\mu_D}[w(S_D)^2\1_{\{S_D<u\}}]\le\rho_A/4,
\]
where we take
\[
 \tau_R=2\sqrt{(k-R)(d-k-R)},\qquad
 u=\tau_R-M\sqrt d\,K^{-1/6}.
\]
This cutoff is no larger than the lemma's cutoff. Recall the corresponding exponents
\[
 \mathcal E_{d,k}(R)
 =d\left[\log2-\HH\!\left(\frac12-\frac{\tau_R}{2d}\right)\right],
 \qquad \Lambda_{d,k}=d\Psi(k/d).
\]
The probability calculation in the proof of \Cref{prop:harmonic-lift}, together with $\mathcal E_{d,k}(R)\ge\Lambda_{d,k}-C_{q_0}R$, gives
\[
 0<p:=\mu_d(S_d\ge u)
 \le e^{-\mathcal E_{d,k}(R)+C_{A,q_0}K^{1/3}}
 \le e^{-\Lambda_{d,k}+C_{A,q_0}(R+k^{1/3})}.
\]
Choosing $K_0$ sufficiently large ensures $0<u<\tau_R<d$ and $p\le1/2$. For the latter, use $\mathcal E_{d,k}(R)\ge2(1-2q_0)K$.

Next, choose $0<\varepsilon<\sqrt{\rho_A}/4$ such that $w(s)+\varepsilon\ne0$ for every possible value $s\in\{-d,-d+2,\ldots,d\}$ of $S_d$, and define
\[
 g(s)=
 \frac{w(s)+\varepsilon}
 {\bigl(\E_{\mu_D}[(w(S_D)+\varepsilon)^2]\bigr)^{1/2}}.
\]
Then $g(S_d)$ never vanishes, and
\[
 \E_{\mu_D}[g(S_D)^2]=1,\qquad
 \E_{\mu_D}[g(S_D)^2\1_{\{S_D<u\}}]
 \le
 \left(\frac{\sqrt{\rho_A}/2+\varepsilon}
 {1-\varepsilon}\right)^2
 \le\rho_A.
\]
Non-zero $g(S_d)$ ensures that every noiseless response can be divided by the known factor $g(S_d(X_i))$, reducing recovery in $V$ to recovery in a lower-degree space $U$.

\emph{Step 2: constructing the model.}
Choose an $m$-dimensional $U\subseteq\cH_{d,R}$ and set
\[
 V=\{g(S_d)f:f\in U\},\qquad E=\{S_d\ge u\}.
\]
By \Cref{lem:harmonic-radial}, for every $f\in U$,
\[
 \E[(g(S_d)f)^2]=\E[f^2],\qquad
 \E[(g(S_d)f)^2\1_{E^c}]
 \le\rho_A\E[f^2].
\]
Thus $\dim V=m$, $V\subseteq\cP_{d,k}$, and, in an orthonormal basis, $\E[\Phi_V\Phi_V^\top\1_{E^c}]\preceq\rho_A I_m$.

\emph{Step 3: noiseless identification.}
Dividing each response by the known nonzero value $g(S_d(X_i))$ reduces recovery to $U$. Choose a basis $\psi_1,\ldots,\psi_m$ of $U$, and let
\[
 A_j=(\psi_\ell(X_i))_{\substack{1\le i\le j\\1\le\ell\le m}}
\]
be its evaluation matrix after $j$ observations. Full column rank means that these evaluations determine every coefficient uniquely.

Condition on $X_1,\ldots,X_j$ and suppose
$\operatorname{rank}(A_j)<m$.
Choose $0\ne a\in\ker A_j$ and set $f=\sum_{\ell=1}^m a_\ell\psi_\ell$.
Then $f$ is nonzero but vanishes at every previous input.
If $f(X_{j+1})\ne0$, the new evaluation row cannot lie in the span of the previous rows since those rows have zero inner product with $a$, whereas the new row does not.
Thus the rank increases.

Every nonzero degree-at-most-$R$ polynomial is nonzero on a fraction of at least $p_R:=2^{-R}$ of the cube.
Indeed, fixing the coordinates outside a monomial of maximal degree $r\le R$ preserves its nonzero coefficient, and hence each resulting restriction is nonzero on at least one of its $2^r$ inputs.
Since $X_{j+1}$ is independent of the previous inputs, the conditional probability of a rank increase is therefore at least $p_R$, until full rank is reached.
Reaching full rank within $n$ observations is therefore at least as likely as obtaining $m$ successes in $n$ independent trials with success probability $p_R$.
Thus
\[
 \Pp\{\operatorname{rank}(A_n)<m\}
 \le\Pp\{\operatorname{Bin}(n,p_R)<m\}.
\]
If $n\ge8p_R^{-1}(m+t)$, then $\mu:=np_R\ge8(m+t)$. The binomial lower-tail bound gives
\[
 \Pp\{\operatorname{Bin}(n,p_R)<m\}
 \le\Pp\{\operatorname{Bin}(n,p_R)\le\mu/2\}
 \le e^{-\mu/8}\le e^{-t}.
\]
On the full-rank event, every target is recovered exactly.

\emph{Step 4: noisy estimation.}
By \Cref{prop:rare-par},
\[
 \mathcal R^*_{n,e^{-t}}(V)
 >A\sigma^2\frac{m+t}{n}
 \qquad\text{for every positive integer}\quad
 n\le\frac{m+t}{1024p}.
\]
The bound on $p$ proves the claimed sample range after increasing $C_{A,q_0}$ sufficiently to absorb the factor $1024$.
\end{proof}

\begin{proof}[Proof of Corollary~\ref{cor:noise-cost}]
Apply \Cref{prop:same-model-general} with $R=1$.
Since $\dim\cH_{d,1}=d-1$, we can choose an $m$-dimensional subspace of $\cH_{d,1}$ for every $1\le m\le d-1$.
The noiseless bound is $O(m+t)$, and the noisy lower bound follows by absorbing the constant harmonic cost into $C_{A,q_0}k^{1/3}$.
The uniform OLS upper bound in \Cref{thm:main} gives the remaining assertion, again absorbing fixed factors into $C_{A,q_0}$.
\end{proof}

\section{The entropy curve and the cubic remainder}\label{app:moment}
We collect the elementary properties of the entropy curve and the
proof of \Cref{lem:cubic}, whose statement and use in clipping appear
in \Cref{sec:clipping}.

\subsection{The entropy curve}\label{app:curve}
Recall that all logarithms are natural and
\[
 \HH(x)=-x\log x-(1-x)\log(1-x),\qquad
 \Psi(q)=\log2-\HH\!\left(\frac12-\sqrt{q(1-q)}\right),
 \qquad \Lambda_{d,k}=d\Psi(k/d),
\]
with $0\log0=0$. The following properties follow by elementary
 differentiation; we include the details used later.

\begin{lemma}[Elementary calculus]\label[lemma]{lem:curve}
For $q\in[0,1/2]$, the function $\Psi$ is increasing and concave, with
\[
 \Psi(0)=0,\qquad\Psi(1/2)=\log2,\qquad\Psi'(0)=2.
\]
For any $q_0<1/2$, uniformly on $0\le q\le q_0$,
\begin{equation*}
 \Psi(q)=2q-\frac23q^2-\frac8{15}q^3+O_{q_0}(q^4),
 \qquad
 \frac{\Psi(q_0)}{q_0}q\le\Psi(q)\le2q.
\end{equation*}
\end{lemma}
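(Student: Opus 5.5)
The plan is to reparametrize $\Psi$ so that it becomes a composition of simple explicit functions and then differentiate. Set $w(q)=\sqrt{q(1-q)}$ and $x(q)=\tfrac12-w(q)$, so that $\Psi(q)=\log2-\HH(x(q))$. Using $\HH'(x)=\log\frac{1-x}{x}$ and $w'(q)=\frac{1-2q}{2w}$, the chain rule gives
\[
 \Psi'(q)=\HH'(x)\,w'(q)=\frac{1-2q}{2w}\,\log\frac{1/2+w}{1/2-w}=\frac{1-2q}{w}\,\artanh(2w).
\]
Since $1-2q=\sqrt{1-4w^2}$, this simplifies to $\Psi'(q)=\sqrt{1-z^2}\,\artanh(z)/(z/2)$ with $z=2w\in[0,1]$. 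From this formula I would read off the endpoint values and monotonicity directly. We have $\Psi(0)=\log2-\HH(1/2)=0$ and $\Psi(1/2)=\log2-\HH(0)=\log2$. As $q\to0$ we get $z\to0$ and $\Psi'\to2$. For $z\in[0,1)$ the derivative is positive, so $\Psi$ is increasing.

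For concavity, note that $z=2\sqrt{q(1-q)}$ is increasing on $[0,1/2]$. It therefore suffices to show that $g(z)=2\sqrt{1-z^2}\,\artanh(z)/z$ is nonincreasing on $(0,1)$. Substituting $z=\tanh y$ gives $g=2y/\sinh y$, which is decreasing for $y>0$ because $\sinh y/y$ is increasing. Hence $\Psi'$ is nonincreasing, and $\Psi$ is concave. I expect this substitution to be the only genuinely non-mechanical step. If it failed, the fallback would be to expand $g$ in a power series and check signs.

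For the expansion, I would expand $y=\artanh z$ and $y/\sinh y=1-y^2/6+7y^4/360-\cdots$ in powers of $z^2=4q(1-q)$. Then I would integrate the resulting series for $\Psi'(q)$ term by term, starting from $\Psi(0)=0$. The coefficients $-\tfrac23$ and $-\tfrac8{15}$ come out of routine algebra. This is the most tedious step, and a sign or coefficient check is needed there. The $O_{q_0}(q^4)$ remainder is uniform on $[0,q_0]$ because $g$ is analytic on $z\in[0,2\sqrt{q_0(1-q_0)}]$, which stays a positive distance from $1$. Taylor's theorem with a bounded fourth derivative on this compact range then gives the remainder bound.

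The two-sided bounds follow from concavity. Since $\Psi(0)=0$ and $\Psi$ is concave, the chord ratio $\Psi(q)/q$ is nonincreasing. This gives $\Psi(q)/q\ge\Psi(q_0)/q_0$ for $q\le q_0$. It also gives $\Psi(q)\le\Psi'(0)\,q=2q$, which alternatively follows from $\Psi'\le2$ via $y/\sinh y\le1$.
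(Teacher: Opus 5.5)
Your proposal is correct and follows essentially the paper's route. Both arguments use the reparametrization $z=2\sqrt{q(1-q)}$, the formula $\Psi'(q)=\frac{2\sqrt{1-z^2}}{z}\artanh z$, a power series in $z^2=4q(1-q)$ for the coefficients (your $-\tfrac23,-\tfrac8{15}$ check out), and the chord/tangent bounds from concavity. The only differences are cosmetic. You obtain concavity from $\Psi'=2y/\sinh y$ with $z=\tanh y$, whereas the paper computes $\Psi''=4(z-\artanh z)/z^3$. You also expand $\Psi'$ and integrate, whereas the paper expands $\Psi=\sum_{j\ge1}z^{2j}/(2j(2j-1))$ directly, which is slightly quicker.
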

\begin{proof}
\emph{Step 1: monotonicity and concavity.} Let $z=2\sqrt{q(1-q)}$ and $w=1-2q$, so $z^2+w^2=1$. Then
\begin{align*}
 \Psi(q)
 &=\frac{1+z}{2}\log(1+z)+\frac{1-z}{2}\log(1-z),\\
 \Psi'(q)&=\frac{2w}{z}\artanh z,\\
 \Psi''(q)&=\frac{4(z-\artanh z)}{z^3}<0.
\end{align*}
Here $\Psi'(q)>0$ and $\Psi''(q)<0$ for $0<q<1/2$.
The endpoint values and $\Psi'(0)=2$ follow by continuity.
Concavity places $\Psi$ above its chord from $0$ to $q_0$ and below
its tangent at $0$, proving the linear bounds.

\emph{Step 2: expansion at zero.} Integrating the power series for $\artanh z$ gives
\[
 \frac{1+z}{2}\log(1+z)+\frac{1-z}{2}\log(1-z)
 =\sum_{j\ge1}\frac{z^{2j}}{2j(2j-1)}.
\]
Substitute $z^2=4q(1-q)$ and collect the first three powers of $q$. The omitted term is uniformly $O_{q_0}(q^4)$ by analyticity at zero and smoothness on a compact interval ending below $1/2$.
\end{proof}

\subsection{Uniform cubic remainder}\label{app:moment-details}
\begin{proof}[Proof of \Cref{lem:cubic}]
\emph{Step 1: expressing the known rate as a minimum.}
This is an algebraic reformulation of the rate in \eqref{eq:KS-rate}.
Take $z=\tanh v$. Then
\begin{align*}
 \log\frac{(1+z)^p+(1-z)^p}{2}
 &=\log\cosh(pv)-p\log\cosh v,\\
 F(p,q)
 &=\min_{v>0}\{\log\cosh(pv)-p g_q(v)\},\\
 g_q(v)
 &=\log\cosh v+q\log\tanh v+\frac12\HH(q).
\end{align*}
Differentiating the above expression in braces gives
\begin{equation}\label{eq:variational-sign}
 \begin{aligned}
 \frac{\partial}{\partial v}
 \{\log\cosh(pv)-p g_q(v)\}
 &=\frac{p}{\sinh v\cosh v}\bigl(q_p(v)-q\bigr),\\
 q_p(v)
 =\sinh v\cosh v\bigl(\tanh(pv)-&\tanh v\bigr)
 =\frac12\left(1-
 \frac{\cosh((p-2)v)}{\cosh(pv)}\right).
 \end{aligned}
\end{equation}
For $p\ge2$, the ratio on the last line decreases strictly from one to zero. This can be seen by inspecting its logarithmic derivative
\(
 (p-2)\tanh((p-2)v)-p\tanh(pv)<0.
\)

Thus $q_p$ increases strictly from zero to $1/2$. The derivative in \eqref{eq:variational-sign} changes sign exactly once, from negative to positive. This proves the variational form and uniqueness. It also shows that evaluation at any value of $v$ gives an upper bound on $F$.

\emph{Step 2: choosing a test point $v$.}
Fix $q\in(0,q_0]$ and set
\begin{equation}\label{eq:a-param}
 a=\artanh\sqrt{\frac q{1-q}},
 \qquad
 q=\frac{\sinh^2a}{\cosh(2a)}.
\end{equation}
At $p=2$, the minimizer is $v=a$. For $p=2+2s$, we can use $v=a/(1+s)$ since this keeps $pv=2a$ fixed, making the first term of the
variational expression constant. Define
\[
 M_q(s)=\log\cosh(2a)-2(1+s)g_q\!\left(\frac a{1+s}\right).
\]
Then $F(2+2s,q)\le M_q(s)$.

\emph{Step 3: verifying cancellation up to the second order.}
Let $a_0=\artanh\sqrt{q_0/(1-q_0)}$. Basic algebra using \eqref{eq:a-param} yields
\begin{equation}\label{eq:g-identities}
 \begin{aligned}
 2g_q(a)&=\log\cosh(2a),\\
 g_q'(a)&=\tanh(2a),\\
 g_q''(a)&=0,\\
 2a\tanh(2a)-\log\cosh(2a)&=\Psi(q).
 \end{aligned}
\end{equation}
To verify these identities, one can simply take $z=\tanh a$. Then $q=z^2/(1+z^2)$ and
\begin{align*}
 \HH(q)&=\log(1+z^2)-2q\log z,\\
 g_q(a)&=\log\cosh a+\tfrac12\log(1+z^2)
          =\tfrac12\log\cosh(2a),\\
 g_q''(a)
 &=\operatorname{sech}^2a
 -q\frac{\cosh(2a)}{\sinh^2a\cosh^2a}=0.
\end{align*}
The first derivative follows from
$g_q'(a)=\tanh a+q/(\sinh a\cosh a)=\tanh(2a)$.
Also $\tanh(2a)=2\sqrt{q(1-q)}$. For $u=\tanh(2a)$,
\(
 \log2-\HH((1-u)/2)=2au-\log\cosh(2a),
\)
which proves the last line of \eqref{eq:g-identities}.

Write $w=1+s$ and $v=a/w$. Differentiation yields
\begin{equation}\label{eq:M-derivatives}
 \begin{aligned}
 M_q'(s)&=2\{vg_q'(v)-g_q(v)\},\\
 M_q''(s)&=-\frac{2v^2}{w}g_q''(v),\\
 M_q'''(s)&=\frac2{w^2}
       \{3v^2g_q''(v)+v^3g_q'''(v)\}.
 \end{aligned}
\end{equation}
It follows from \eqref{eq:g-identities} that
$M_q(0)=0$, $M_q'(0)=\Psi(q)$, and $M_q''(0)=0$.

\emph{Step 4: bounding the remainder uniformly as $q\downarrow0$.}
Taylor's theorem concludes the proof once $|M_q'''(s)|\le C_{q_0}q$. 

Let $\ell(v)=\log\tanh v$. Its derivatives satisfy
\begin{align*}
 \ell''(v)&=-\frac{4\cosh(2v)}{\sinh^2(2v)},\\
 \ell'''(v)&=-\frac8{\sinh(2v)}
             +\frac{16\cosh^2(2v)}{\sinh^3(2v)}.
\end{align*}
For $0<v\le a_0$, the quantities $v^2|\ell''(v)|$ and $v^3|\ell'''(v)|$ are bounded by constants depending only on $a_0$. This can be seen directly from $\sinh(2v)\ge2v$, since their limits at zero are $1$ and $2$. Moreover,
\[
 v^2\le a^2
 \le q\cosh(2a_0),
\]
because $q=\sinh^2a/\cosh(2a)$ and $\sinh a\ge a$. The derivatives of $\log\cosh v$ are bounded on $[0,a_0]$.
In $g_q(v)=\log\cosh v+q\ell(v)+\HH(q)/2$, the first term therefore contributes $O_{q_0}(v^2)$ and $O_{q_0}(v^3)$ to the weighted second and third derivatives respectively, while the $\ell$ term contributes $O_{q_0}(q)$. Since $v\le a_0$ and $v^2=O_{q_0}(q)$, this yields
\[
 |v^2g_q''(v)|\le C_{q_0}q~, \quad|v^3g_q'''(v)|\le C_{q_0}q.
\]
For $0\le s\le1/2$, \eqref{eq:M-derivatives} yields
$|M_q'''(s)|\le C_{q_0}q$.
Taylor’s theorem and the bound on the third derivative give
$M_q(s)\le s\Psi(q)+C_0qs^3$.
Since $F(2+2s,q)\le M_q(s)$, this proves \eqref{eq:cubic-rate}. Finally, we apply \Cref{thm:KS} and multiply by $d$, using $dq=k$, to obtain \eqref{eq:cubic-moment}.
\end{proof}

\section{Classical analytic and probabilistic tools}\label{app:tools}\label{app:empirical}
This section collects the hypercontractive, empirical-process, and Gaussian probability bounds used in the sampling arguments.

\subsection{Bonami--Beckner hypercontractivity}
We use the classical degree form of the Bonami--Beckner inequality
\cite{Bon70,Bec75}.
\begin{lemma}[Bonami--Beckner inequality]\label[lemma]{lem:bonami-beckner}
For $p\ge2$ and every $h\in\cP_{d,k}$,
\[
 \bigl(\E[|h|^p]\bigr)^{1/p}
 \le (p-1)^{k/2}\bigl(\E[h^2]\bigr)^{1/2}.
\]
In particular, for $\E[h^2]=1$,
\[
 \E[h^4]\le9^k,\qquad
 \log\E[|h|^{2+2s}]\le k(1+s)\log(1+2s)
 \quad(s\ge0).
\]
\end{lemma}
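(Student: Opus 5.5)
The statement to prove is the Bonami--Beckner degree bound $\|h\|_p\le(p-1)^{k/2}\|h\|_2$ for $h\in\cP_{d,k}$, together with its two consequences. I would not re-derive hypercontractivity. Instead I would quote the classical Bonami--Beckner noise-operator inequality \cite{Bon70,Bec75}: for $p\ge2$ and $\eta=(p-1)^{-1/2}$, the operator $T_\eta\chi_S=\eta^{|S|}\chi_S$ satisfies $\|T_\eta f\|_p\le\|f\|_2$ for every $f$ on $\{-1,1\}^d$. The degree form then follows in one step.

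Write $h=\sum_{|S|\le k}\hat h(S)\chi_S$ and define $f=\sum_S\eta^{-|S|}\hat h(S)\chi_S$, so that $T_\eta f=h$. Applying the noise-operator inequality gives
\[
\|h\|_p\le\|f\|_2=\Bigl(\sum_S\eta^{-2|S|}\hat h(S)^2\Bigr)^{1/2}\le\eta^{-k}\|h\|_2=(p-1)^{k/2}\|h\|_2.
\]
The last inequality uses $|S|\le k$ and $\eta\le1$.

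For the consequences, take $\E[h^2]=1$. With $p=4$ the bound gives $\E[h^4]\le(3^{k/2})^4=9^k$. With $p=2+2s$ it gives
\[
\E[|h|^{2+2s}]\le(1+2s)^{k(2+2s)/2}=(1+2s)^{k(1+s)},
\]
and taking logarithms yields $\log\E[|h|^{2+2s}]\le k(1+s)\log(1+2s)$. At $s=0$ both sides are zero, so the inequality holds there as well.

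There is no substantive obstacle in this argument. The only points to get right are the direction of the noise parameter, namely $\eta^2=1/(p-1)$ for the $2\to p$ bound, and the observation that inverting $T_\eta$ is legitimate because on the finite cube every function is a finite Walsh expansion. If a self-contained proof of the base inequality were required, I would follow the standard route: prove the two-point inequality on $\{-1,1\}$ by a Taylor expansion in the even moments, then tensorize by induction on $d$ using Minkowski's integral inequality.
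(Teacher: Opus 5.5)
Your proposal is correct and essentially matches the paper, which gives no proof beyond citing the degree form from \cite{Bon70,Bec75} and substituting $p=4$ and $p=2+2s$ for the two consequences. Your step of inverting $T_\eta$ with $\eta=(p-1)^{-1/2}$ is the standard way to get that degree form from the noise-operator inequality.
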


\subsection{A bound independent of the ambient dimension}
For our unrestricted-dimension result we also need the classical Bonami--Beckner hypercontractive estimate. The clipping step is the same as in \Cref{prop:Tupper}.
\begin{proposition}[Classical hypercontractive comparison]\label[proposition]{prop:classical-upper}
For $k\ge1$, $\eta\in(0,1)$, and $T_k(\eta)=\sup_{d\ge1}T_{d,k}(\eta)$,
\[
 \log T_k(\eta)\le 2k+
 \left(\frac16+2\log\frac1{1-\eta}\right)k^{1/3}.
\]
\end{proposition}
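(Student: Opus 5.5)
The plan is to repeat the clipping argument of \Cref{prop:Tupper}, replacing \Cref{lem:cubic} with the dimension-free moment bound of \Cref{lem:bonami-beckner}. Fix $d\ge1$ and a unit $h\in\cP_{d,k}$, and put $L_\eta=\log(1/(1-\eta))$. Step~1 of that proof gives, for $0<s\le1/2$,
\[
 \E[(h^2-B)_+]\le B^{-s}\E[|h|^{2+2s}]
 \le\exp\bigl(-s\log B+k(1+s)\log(1+2s)\bigr).
\]
To make the right side at most $1-\eta$, so that $\E[h^2\wedge B]\ge\eta$, it suffices to choose
\[
 \log B=\frac{k(1+s)\log(1+2s)}{s}+\frac{L_\eta}{s}.
\]
Since this $B$ does not depend on $d$ or $h$, taking the supremum over both gives $\log T_k(\eta)\le\log B$.

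Next I would bound the first term using $\log(1+x)\le x-x^2/2+x^3/3$ with $x=2s$:
\[
 \frac{(1+s)\log(1+2s)}{s}\le(1+s)\Bigl(2-2s+\tfrac83s^2\Bigr)=2+\tfrac23s^2+\tfrac83s^3.
\]
The linear terms cancel, which is what makes a $k^{1/3}$ remainder possible. This gives
\[
 \log B\le 2k+\tfrac23ks^2+\tfrac83ks^3+\frac{L_\eta}{s}.
\]

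Now choose $s=\tfrac12 k^{-1/3}$, which satisfies $s\le1/2$ for all $k\ge1$. Then $\tfrac23ks^2=\tfrac16k^{1/3}$, $\tfrac83ks^3=\tfrac13$, and $L_\eta/s=2L_\eta k^{1/3}$. This yields
\[
 \log T_k(\eta)\le 2k+\Bigl(\tfrac16+2L_\eta\Bigr)k^{1/3}+\tfrac13.
\]
This is the claimed bound except for the stray additive constant $\tfrac13$ coming from the cubic term.

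The main obstacle is removing that $\tfrac13$ so the stated constants hold exactly. I would first try a sharper upper bound on $(1+s)\log(1+2s)/s-2$, aiming to show it is at most $\tfrac23s^2$ on $[0,1/2]$. The Taylor expansion of $(1+s)\log(1+2s)$ is $2s-2s^2+\tfrac83s^3-4s^4+\dots$ together with the factor $(1+s)$, so the series has alternating signs. Using the exact expansion rather than the crude three-term bound may show that the quartic and higher terms dominate the positive cubic one. If that fails, I would retune $s=c\,k^{-1/3}$, or accept $s$ slightly different from $\tfrac12k^{-1/3}$, and check whether the slack between $\tfrac16$ and the optimized constant absorbs the cubic term. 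The remaining steps are routine.
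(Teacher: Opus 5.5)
Your route is the paper's: Bonami--Beckner at $p=2+2s$, the clipping estimate $\E[(h^2-B)_+]\le B^{-s}\E[|h|^{2+2s}]$, a clipping level $B$ independent of $d$ and $h$, and $s=\tfrac12k^{-1/3}$. The gap is the one you flag yourself. As written, your argument proves the bound only up to an additive $\tfrac13$, so the stated constants are not established.

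The missing step is the sharp inequality
\[
 f(s):=(1+s)\log(1+2s)\le 2s+\tfrac23 s^3\qquad(s\ge0).
\]
It follows from Taylor's theorem. We have $f(0)=0$ and $f'(0)=2$. Next, $f''(s)=4s/(1+2s)^2$, so $f''(0)=0$. Finally, $f'''(s)=4(1-2s)/(1+2s)^3\le4$. Equivalently, integrate $f''(s)\le4s$ twice. This is exactly what the paper uses. With it you may take $\log B=2k+\tfrac23ks^2+L_\eta/s$, and $s=\tfrac12k^{-1/3}$ gives exactly $2k+(\tfrac16+2L_\eta)k^{1/3}$. Your three-term bound on $\log(1+x)$ loses because it replaces the true quartic coefficient $-\tfrac43$ in the expansion of $f$ by $+\tfrac83$. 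Your first fallback, exploiting the alternating signs of the exact expansion, points in the right direction, but it has to be carried out; the third-derivative bound above is the cleanest way to do so.

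Your second fallback, retuning $s$ while keeping the term $\tfrac83ks^3$, cannot work. Take $L_\eta=\tfrac16$, i.e.\ $\eta=1-e^{-1/6}$. Then for every $s>0$,
\[
 \tfrac23ks^2+\frac{L_\eta}{s}\ge\tfrac12k^{1/3}=\bigl(\tfrac16+2L_\eta\bigr)k^{1/3},
\]
with equality only at $s=\tfrac12k^{-1/3}$. There is therefore no slack, and any strictly positive cubic remainder pushes your $\log B$ above the claimed bound for every choice of $s$. The sharper Taylor estimate is genuinely needed for the stated constants, not merely convenient.
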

\begin{proof}
If $\E[h^2]=1$ and $\deg h\le k$, \Cref{lem:bonami-beckner} with $p=2+2s$ gives
\[
 \log \E[|h|^{2+2s}]\le k(1+s)\log(1+2s)
 \le 2ks+\frac23ks^3\qquad(0\le s\le1/2).
\]
\emph{Step 1: a Taylor bound.} Indeed, $f(s)=(1+s)\log(1+2s)$ satisfies $f(0)=f''(0)=0$, $f'(0)=2$, and $f'''(s)=4(1-2s)/(1+2s)^3\le4$ on this interval. 

\emph{Step 2: clipping.} We take $L=\log(1/(1-\eta))$, $s=k^{-1/3}/2$, and $\log B=2k+(2/3)ks^2+L/s$. Then
\[
 \E[(h^2-B)_+]\le B^{-s}\E[|h|^{2+2s}]\le e^{-L}=1-\eta,
\]
so $\E[h^2\wedge B]\ge\eta$. Substituting $s$ proves the bound uniformly over $d$ and $h$.
\end{proof}

\subsection{A bounded empirical-process inequality}
We use the following result in its bounded, centered form. For convenience, we include the zero function in the class,
ensuring that $Z\ge0$.
\begin{theorem}[Bousquet, Theorem 2.3 of \cite{Bou02}]\label[theorem]{thm:bousquet}
Let $X_1,\ldots,X_n$ be independent and identically distributed, and let $\mathcal G$ be a countable class of measurable real functions such that
\[
 \E[g]=0,\qquad |g|\le1,\qquad \E[g^2]\le v\quad(g\in\mathcal G).
\]
Assume $0\in\mathcal G$, and put $Z=\sup_{g\in\mathcal G}\sum_{i=1}^ng(X_i)$. Then, for every $t>0$,
\[
 \Pp\left(
 Z>\E Z+\sqrt{2(nv+2\E Z)t}+\frac t3
 \right)\le e^{-t}.
\]
\end{theorem}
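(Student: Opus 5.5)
This inequality is imported from \cite{Bou02}, and in the paper it suffices to cite it. If one wanted a self-contained argument, I would reproduce the entropy method (Herbst argument) for suprema of empirical processes. The plan is to bound the log-Laplace transform $G(\lambda)=\log\E e^{\lambda(Z-\E Z)}$ by the Bennett function $(nv+2\E Z)\,\phi(\lambda)$, where $\phi(\lambda)=e^\lambda-\lambda-1$. The stated Bernstein form then follows from Chernoff's bound together with the standard inversion of Bennett's rate $h(u)=(1+u)\log(1+u)-u\ge u^2/(2+2u/3)$. By monotone convergence it suffices to treat finite classes $\mathcal G$. The assumption $0\in\mathcal G$ guarantees $Z\ge0$, which will be used below.

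\emph{Step 1: tensorization.} I would use the tensorization inequality for entropy, $\operatorname{Ent}(e^{\lambda Z})\le\sum_{k=1}^n\E\,\operatorname{Ent}^{(k)}(e^{\lambda Z})$. To each $k$ I would attach the leave-one-out supremum $Z_k=\sup_g\sum_{i\ne k}g(X_i)$, which is independent of $X_k$. The variational (modified log-Sobolev) bound
\[
\operatorname{Ent}^{(k)}(e^{\lambda Z})\le\E^{(k)}\bigl[e^{\lambda Z}\,\phi(-\lambda(Z-Z_k))\bigr]
\]
reduces everything to the increments $Z-Z_k$.

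\emph{Step 2: increment bounds.} Let $g^*$ be a maximizer for $Z$. Then $Z-Z_k\le g^*(X_k)\le1$. If $g_k$ maximizes $Z_k$, then also $Z-Z_k\ge g_k(X_k)$, and $g_k$ is independent of $X_k$, so $\E^{(k)}g_k(X_k)=0$. Summing over $k$ gives $\sum_k(Z-Z_k)\le\sum_k g^*(X_k)=Z$.

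\emph{Step 3: the main obstacle.} The key difficulty is controlling $\sum_k\phi(-\lambda(Z-Z_k))$ sharply enough to recover the variance proxy $nv+2\E Z$ with the exact constant, rather than $nv+C\,\E Z$ for some worse $C$. I would follow Bousquet's device. Since the increments are at most $1$, the function $\phi(-\lambda x)/x^2$ is monotone in $x$ on $x\le1$. Combined with the lower bound $Z-Z_k\ge g_k(X_k)$, this splits the entropy bound into two parts:
\begin{enumerate}
\item a part controlled by $\sum_k g_k(X_k)^2$, whose expectation is at most $nv$;
\item a part controlled linearly by $\sum_k(Z-Z_k)\le Z$.
\end{enumerate}
Together these yield the differential inequality
\[
(\lambda e^\lambda-e^\lambda+1)\,G'(\lambda)\le \phi(\lambda)\bigl(nv+\E Z\bigr)+\cdots .
\]

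\emph{Step 4: integration.} Integrating this differential inequality as in Bousquet's proof should give $G(\lambda)\le(nv+2\E Z)\phi(\lambda)$. This is the hardest computation, and I would follow \cite{Bou02} line by line there. Finally, Markov's inequality applied to $e^{\lambda(Z-\E Z)}$, optimized over $\lambda$, gives the displayed tail bound $e^{-t}$.
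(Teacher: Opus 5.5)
The paper does not prove this statement. It imports \cite[Theorem~2.3]{Bou02}, and its only addition (the remark after the statement) checks the countability hypothesis for the use in \Cref{prop:clip-cov}. Your primary move, citing it, therefore coincides with the paper's. Your optional self-contained sketch, however, would not go through as written, at two points.

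\textbf{Step 3: the monotonicity goes the wrong way, and the key split is missing.} For $\lambda>0$ the map $x\mapsto\phi(-\lambda x)/x^2$ is nonincreasing on all of $\mathbb R$. So $x=Z-Z_k\le1$ only gives the lower bound $\phi(-\lambda x)\ge x^2\phi(-\lambda)$, and using $x\ge g_k(X_k)$ instead leaves the uncontrolled ratio $x^2/g_k(X_k)^2$. The useful monotone function is $\omega(y)/y^2$, where $\omega(y)=e^y\phi(-y)=1+(y-1)e^y$; it is nondecreasing. Apply it after writing $e^{\lambda Z}=e^{\lambda Z_k}e^{\lambda x}$.

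Even then, splitting $x^2$ is the real difficulty. The natural bound $x^2\le a^2+2(x-a)$ for $a=g_k(X_k)\le x\le 1$ yields only $(\lambda-2\omega(\lambda))G_0'-G_0\le nv\,\omega(\lambda)$ for $G_0=\log\E e^{\lambda Z}$. This is vacuous once $2\omega(\lambda)\ge\lambda$, already near $\lambda\approx0.64$.

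One pointwise inequality that does close the step is
\[
(2e^\lambda-1)\,\omega(\lambda x)\le\omega(\lambda)\bigl(a^2-2a+2xe^{\lambda x}\bigr),\qquad a\le x\le1,\ \lambda\ge0.
\]
It reduces to the case $a=x$. The difference of the two sides, as a function of $x$, vanishes to second order at $0$, vanishes at $1$, and is convex to the left of a single inflection point in $(0,\infty)$ and concave to the right. This forces it to be nonnegative on $(-\infty,1]$.

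Combine this with $\E[e^{\lambda Z_k}g_k(X_k)]=0$ and with $\E[e^{\lambda Z_k}g_k(X_k)^2]\le v\E e^{\lambda Z_k}\le v\E e^{\lambda Z}$, which follows from Jensen since $\E^{(k)}Z\ge Z_k$. Together with $\sum_k(Z-Z_k)\le Z$, this gives
\[
(2e^\lambda-\lambda-2)\,G'(\lambda)-(2e^\lambda-1)\,G(\lambda)\le(nv+2\E Z)\,\omega(\lambda).
\]
Here $(nv+2\E Z)\phi$ is an exact solution. Both it and $G$ vanish together with their derivatives at $0$, and $2e^\lambda-\lambda-2>0$, so a comparison argument gives $G\le(nv+2\E Z)\phi$. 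Your display ending in ``$+\cdots$'' determines none of this, so Steps~3--4 are a pointer to \cite{Bou02} rather than an argument.

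\textbf{The final inversion loses a factor $2$ in the linear term.} Put $V=nv+2\E Z$. The relaxation $h(u)\ge u^2/(2+2u/3)$ gives $\Pp(Z-\E Z\ge s)\le\exp\{-s^2/(2(V+s/3))\}$. This certifies level $e^{-t}$ only at $s=t/3+\sqrt{t^2/9+2Vt}$, i.e.\ up to $\sqrt{2Vt}+\tfrac23t$, and in general not at $\sqrt{2Vt}+\tfrac t3$. For example, with $V=1$, $t=9$ and $s=\sqrt{18}+3$, the exponent is about $7.7<9$.

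To obtain $t/3$, use $\phi(\lambda)\le\lambda^2/(2(1-\lambda/3))$ on $[0,3)$ together with the sub-gamma Chernoff bound. Its Legendre transform inverts exactly to $\sqrt{2Vt}+t/3$.
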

By continuity in the coefficient vector, all suprema may be taken
over a countable dense subset of the unit sphere, so Bousquet's
countability assumption is satisfied.

\subsection{Symmetrization and contraction}
The following are two standard results, see
\cite[Lemma~11.4 and Theorem~11.6]{BLM13}.

\begin{lemma}[Symmetrization and contraction]
\label{lem:symmetrization-contraction}
Let $\varepsilon_1,\ldots,\varepsilon_n$ be independent random
signs, each uniform on $\{-1,1\}$.

\emph{(i) Symmetrization.}
Let $X_1,\ldots,X_n$ be independent and identically distributed,
independently of the signs. For a countable class $\mathcal F$
of integrable functions,
\[
 \E\sup_{f\in\mathcal F}
 \left|\E_n[f]-\E[f]\right|
 \le
 2\E\sup_{f\in\mathcal F}
 \left|\frac1n\sum_{i=1}^n\varepsilon_i f(X_i)\right|.
\]
The expectation on the right includes both the sample and the signs.

\emph{(ii) Contraction.}
For a bounded set $\mathcal A\subseteq\R^n$ and an $L$-Lipschitz
function $\varphi:\R\to\R$ satisfying $\varphi(0)=0$,
\[
 \E_\varepsilon\sup_{a\in\mathcal A}
 \left|\sum_{i=1}^n\varepsilon_i\varphi(a_i)\right|
 \le
 2L\E_\varepsilon\sup_{a\in\mathcal A}
 \left|\sum_{i=1}^n\varepsilon_i a_i\right|.
\]
\end{lemma}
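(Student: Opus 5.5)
This lemma is the standard pair of symmetrization and contraction from empirical-process theory, so the plan is to reproduce the textbook arguments (cf.\ \cite[Lemma~11.4 and Theorem~11.6]{BLM13}) rather than derive anything specific to the cube.

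\emph{Part (i): symmetrization.} Introduce an independent ghost sample $X_1',\ldots,X_n'$ with the same law. For each $f$, write $\E[f]=\E'[\frac1n\sum_i f(X_i')]$. Jensen's inequality, applied to the convex map ``supremum of absolute values'' conditionally on $X$, gives
\[
\E\sup_f\Bigl|\E_n[f]-\E[f]\Bigr|\le\E\sup_f\Bigl|\frac1n\sum_i\bigl(f(X_i)-f(X_i')\bigr)\Bigr|.
\]
Each summand is symmetric, and the pairs are independent across $i$. Hence multiplying the $i$-th summand by $\varepsilon_i$ leaves the joint law unchanged. The triangle inequality then splits the right-hand side into two identically distributed Rademacher averages, which produces the factor $2$. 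Countability of $\mathcal F$ ensures the suprema are measurable. Integrability of each $f$ is needed only to define $\E[f]$; if the right-hand side is infinite there is nothing to prove.

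\emph{Part (ii): contraction.} The main step is the Ledoux--Talagrand comparison without absolute values:
\[
\E_\varepsilon\sup_a\sum_i\varepsilon_i\varphi(a_i)\le L\,\E_\varepsilon\sup_a\sum_i\varepsilon_ia_i.
\]
I would prove this one coordinate at a time. Condition on $\varepsilon_2,\ldots,\varepsilon_n$ and show that replacing $\varphi(a_1)$ by $La_1$ does not decrease the expectation. Averaging over $\varepsilon_1$ gives
\[
\tfrac12\sup_{a,b}\bigl[\varphi(a_1)-\varphi(b_1)+T(a)+T(b)\bigr],
\]
where $T$ collects the remaining coordinates. Since $\varphi(a_1)-\varphi(b_1)\le L|a_1-b_1|$, and the expression is symmetric under swapping $a$ and $b$, this is at most the same supremum with $L(a_1-b_1)$ in place of $\varphi(a_1)-\varphi(b_1)$. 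Iterating over all $n$ coordinates proves the comparison. Boundedness of $\mathcal A$ keeps every supremum finite.

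To pass to absolute values, use $\varphi(0)=0$ and enlarge the set to $\mathcal A\cup\{0\}$. For any class containing the zero vector, the supremum is nonnegative, so
\[
\sup|\cdot|\le\sup(\cdot)_++\sup(-\cdot)_+ .
\]
The sign vector $-\varepsilon$ has the same law as $\varepsilon$, so both terms have equal expectation. Applying the comparison to each term then yields
\[
\E\sup|\textstyle\sum_i\varepsilon_i\varphi(a_i)|\le 2L\,\E\sup_{\mathcal A\cup\{0\}}\textstyle\sum_i\varepsilon_ia_i\le 2L\,\E\sup_{\mathcal A}|\textstyle\sum_i\varepsilon_ia_i|.
\]
The last step uses that the zero vector contributes $0$, which is at most the absolute supremum.

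\emph{Main obstacle.} The delicate step is the single-coordinate comparison. It requires a careful case analysis according to the signs of $a_1-b_1$, together with the symmetrization of the pair $(a,b)$, to justify exchanging $\varphi$ for the linear map; this is where I expect most of the care to be needed.
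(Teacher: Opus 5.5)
Your proposal is correct and matches the paper's treatment: the paper gives no proof of its own and cites \cite[Lemma~11.4 and Theorem~11.6]{BLM13}, whose proofs are the ghost-sample symmetrization and the coordinatewise Ledoux--Talagrand comparison you describe, and your passage to absolute values (adjoining $0$, using $\varphi(0)=0$ and the symmetry $\varepsilon\mapsto-\varepsilon$) correctly yields the factor $2L$. The step you flag as the main obstacle is in fact immediate: $T(a)+T(b)+L|a_1-b_1|$ is invariant under swapping $a$ and $b$, so its supremum over pairs equals that of $T(a)+T(b)+L(a_1-b_1)$, with no further case analysis required.
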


\subsection{Elementary Gaussian probability bounds}
\begin{lemma}[Gaussian and chi-square tail bounds]\label[lemma]{lem:gaussian-bounds}
For $r\ge1$, the chi-squared variable defined in \Cref{sec:model} satisfies the following bounds.
For $u\ge0$ and $0<a<1$,
\begin{align}
 \Pp(\chi_r^2\ge4(r+u))&\le e^{-u},\label{eq:chi-upper}\\
 \Pp(\chi_r^2\le ar)&\le(ae^{1-a})^{r/2}.
 \label{eq:chi-lower}
\end{align}
For a one-dimensional standard Gaussian $G$ and every $t\ge\log4$,
\begin{equation}\label{eq:strict-tail}
 \frac{15}{16}e^{-t/256}\Pp(G^2\ge t/64)>e^{-t}.
\end{equation}
\end{lemma}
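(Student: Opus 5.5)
The plan is to prove the three bounds separately, each by an elementary Chernoff or Mills-ratio computation. The only inputs are the chi-squared moment generating function $\E[e^{\lambda\chi_r^2}]=(1-2\lambda)^{-r/2}$ for $\lambda<1/2$ and a standard lower bound on the Gaussian tail.

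\emph{Upper tail \eqref{eq:chi-upper}.} I would apply Markov's inequality to $e^{\lambda\chi_r^2}$ with the fixed choice $\lambda=1/4$. This gives
\[
\Pp(\chi_r^2\ge4(r+u))\le e^{-(r+u)}\,2^{r/2}=e^{-u}\,(\sqrt2/e)^{r}\le e^{-u},
\]
since $\sqrt2<e$. No optimization is needed; one could alternatively quote Laurent--Massart and use $r+2\sqrt{ru}+2u\le 2r+3u\le4(r+u)$.

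\emph{Lower tail \eqref{eq:chi-lower}.} For $\lambda>0$, Markov applied to $e^{-\lambda\chi_r^2}$ gives
\[
\Pp(\chi_r^2\le ar)\le e^{\lambda ar}(1+2\lambda)^{-r/2}.
\]
I would take the optimizer $\lambda=(1/a-1)/2>0$, so that $1+2\lambda=1/a$ and $\lambda a r=r(1-a)/2$. The bound then becomes exactly $(ae^{1-a})^{r/2}$.

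\emph{Strict inequality \eqref{eq:strict-tail}.} Put $x=\sqrt t/8$, so that $\Pp(G^2\ge t/64)=\Pp(|G|\ge x)$, and note $x\ge\sqrt{\log4}/8>0$. I would split into two cases.

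\textbf{Case $x\le1$ (i.e.\ $t\le64$).} Here $\Pp(|G|\ge x)\ge\Pp(|G|\ge1)>0.3173$, so the left side is at least $0.297\,e^{-t/256}$. The ratio of this quantity to $e^{-t}$ is increasing in $t$, so it suffices to check the endpoint $t=\log4$. There $e^{-t}=1/4$, while $0.297\,e^{-\log 4/256}>0.29$, so the inequality holds.

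\textbf{Case $x\ge1$ (i.e.\ $t\ge64$).} I would use the Mills-ratio bound
\[
\Pp(|G|\ge x)\ge\sqrt{2/\pi}\,\frac{x}{1+x^2}\,e^{-x^2/2}\ge\frac{\sqrt{2/\pi}}{2x}\,e^{-t/128}.
\]
After taking logarithms, it suffices to show
\[
\log\!\Big(\tfrac{15}{16}\sqrt{2/\pi}\Big)-\log(2x)>-\Big(1-\tfrac{3}{256}\Big)t .
\]
The right side is at most $-63$ when $t\ge64$, whereas $\log(2x)=\log(\sqrt t/4)$ grows only logarithmically. So the inequality holds with a large margin throughout this range.

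The main obstacle I expect is purely numerical: the constant bookkeeping near $t=\log4$ in the first case, where the margin is small ($0.29$ versus $0.25$). I would verify that endpoint carefully, using the value $\Pp(|G|\ge1)\approx0.3173$, and then rely on monotonicity in $t$ for the rest of the range.
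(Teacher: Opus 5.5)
Your proposal is correct and follows the paper's proof: it uses the same Chernoff parameters ($\lambda=1/4$ for the upper tail, $\lambda=(1-a)/(2a)$ for the lower tail) and the same small-$t$/large-$t$ split for \eqref{eq:strict-tail}. The differences are only in constants: the paper splits at $t=16$, using the density bound $\Pp(|G|\ge1/2)>3/5$ for small $t$ and integrating the density over $[b,b+1]$ for large $t$, which avoids your reliance on the numerical value $\Pp(|G|\ge1)\approx0.3173$ (the Mills bound at $x=1$ gives only about $0.242$, which would not suffice at $t=\log4$, so you must justify that value numerically).
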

\begin{proof}
Completing the square in the Gaussian integral gives
$\E[e^{v\chi_r^2}]=(1-2v)^{-r/2}$ for $v<1/2$.
Exponential Markov with $v=1/4$ gives
\begin{align*}
 \Pp(\chi_r^2\ge4(r+u))
 &\le e^{-r-u}2^{r/2}\le e^{-u}.
\end{align*}
For the lower tail use $v=-(1-a)/(2a)<0$:
\[
 \Pp(\chi_r^2\le ar)
 \le e^{-var}(1-2v)^{-r/2}=(ae^{1-a})^{r/2}.
\]
To prove \eqref{eq:strict-tail}, first suppose $\log4\le t\le16$. Then $\sqrt{t/64}\le1/2$. The Gaussian density is bounded by $(2\pi)^{-1/2}$, so
\[
 \Pp(|G|\ge1/2)\ge1-(2\pi)^{-1/2}>3/5.
\]
The left side of \eqref{eq:strict-tail} is then greater than
$(15/16)e^{-1/16}(3/5)>1/2$, while $e^{-t}\le1/4$.

For $t\ge16$, put $b=\sqrt t/8$. Integrate over $[b,b+1]$ and $[-b-1,-b]$:
\[
 \Pp(G^2\ge t/64)
 \ge\sqrt{2/\pi}\,e^{-(b+1)^2/2}
 =\sqrt{2/\pi}\,e^{-1/2}
     e^{-t/128-\sqrt t/8}
 >\frac14e^{-5t/128},
\]
since $\sqrt t/8\le t/32$. Thus the left side of \eqref{eq:strict-tail} exceeds
$(15/64)e^{-11t/256}$. This is larger than $e^{-t}$ for every $t\ge16$.
\end{proof}

\begin{corollary}[OLS after lower conditioning]\label[corollary]{cor:ols}
If $\Pp(\hSigma_V\succeq\tfrac12I_m)\ge1-\delta/2$, where $\delta\le1/4$, then
\[
 \mathcal R^*_{n,\delta}(V)
 \le12\sigma^2\frac{m+\log(1/\delta)}n.
\]
OLS satisfies the same bound uniformly over $\theta$.
\end{corollary}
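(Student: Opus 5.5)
The plan is to work conditionally on the inputs and bound the OLS error, then check that this same bound controls the minimax quantile via \Cref{thm:exact-minimax}. Let $\mathcal G=\{\hSigma_V\succeq\tfrac12 I_m\}$, so that $\Pp(\mathcal G^c)\le\delta/2$. On $\mathcal G$ the design is nonsingular. Conditionally on $X_1,\ldots,X_n$, the OLS residual is $N(0,\tfrac{\sigma^2}{n}\hSigma_V^{-1})$, and this holds for every $\theta$ because the residual does not depend on $\theta$. Equivalently, \eqref{eq:comparevar} gives $Z_{n,V}\overset{d}{=}\tfrac{\sigma^2}{n}\sum_j G_j^2/\lambda_j$. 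Since every $\lambda_j\ge1/2$ on $\mathcal G$, this yields the conditional stochastic domination
\[
\|\widehat\theta_{\mathrm{OLS}}-\theta\|_2^2\preceq_{\rm st}\frac{2\sigma^2}{n}\chi_m^2 .
\]

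Next, apply \eqref{eq:chi-upper} with $u=\log(2/\delta)$. This gives $\Pp\bigl(\chi_m^2\ge4(m+\log(2/\delta))\bigr)\le\delta/2$. A union bound with $\mathcal G^c$ then shows that, with probability at least $1-\delta$, uniformly in $\theta$,
\[
\|\widehat\theta_{\mathrm{OLS}}-\theta\|_2^2\le\frac{8\sigma^2}{n}\bigl(m+\log 2+\log(1/\delta)\bigr).
\]
Because $\delta\le1/4$, we have $t=\log(1/\delta)\ge\log4=2\log2$, so $\log2\le t/2$. The bracket is therefore at most $m+\tfrac32 t$, and $8(m+\tfrac32t)\le12(m+t)$. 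Hence $Q_{1-\delta,\theta}$ of the OLS loss is at most $12\sigma^2(m+t)/n$ for every $\theta$, and taking the supremum over $\theta$ bounds $\mathcal R^*_{n,\delta}(V)$ by the same quantity.

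One technical point needs care. OLS is not unique off $\mathcal G$, so we should not rely on it there: we fix any measurable minimizer and simply discard $\mathcal G^c$, which is already charged at most $\delta/2$ in the union bound. Alternatively, one can argue directly with $Z_{n,V}$ from \Cref{thm:exact-minimax}, setting it to $+\infty$ off $\mathcal G$; the same computation bounds $Q_{1-\delta}(Z_{n,V})$. Beyond this, the argument is routine constant bookkeeping, and the only place that uses $\delta\le1/4$ is absorbing the $\log 2$.
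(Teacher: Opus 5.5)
Your proposal is correct and follows the paper's proof essentially step for step. The shared steps are the conditional domination of the OLS loss by $\frac{2\sigma^2}{n}\chi_m^2$ on $\{\hSigma_V\succeq\tfrac12 I_m\}$, the tail bound \eqref{eq:chi-upper} with $u=t+\log2$, a union bound with the design-failure event, and the absorption $\log2\le t/2$ giving $8(m+t+\log2)\le12(m+t)$. Your remarks on the $\theta$-independence of the residual and on discarding the non-unique OLS region off the good event only make explicit what the paper leaves implicit.
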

\begin{proof}
Conditionally on a design with $\hSigma_V\succeq I_m/2$,
\[
 \|\widehat\theta_{\rm OLS}-\theta\|_2^2
 \ \preceq_{\rm st}\ \frac{2\sigma^2}{n}\chi_m^2.
\]
Use \eqref{eq:chi-upper} with $u=t+\log2$, where $t=\log(1/\delta)$, and add the design failure probability. Since $t\ge\log4$, $\log2\le t/2$, and
\(
 8(m+t+\log2)\le12(m+t).
\)

The minimax assertion follows because OLS is an admissible estimator (or from \Cref{thm:exact-minimax}).
\end{proof}

\section{The radial-weight identity}\label{app:harmonic-details}
We derive \Cref{lem:harmonic-radial} directly from
\cite[Theorem~3.10]{FM19}. We first apply that theorem to the uniform cube to identify its measure-independent constant, and then to the
cube weighted by $H(S_d)$.
\begin{proof}
Filmus--Mossel's theorem states that, for homogeneous harmonic multilinear $f,g$ of degrees $r,s$ and any exchangeable probability measure $\alpha$, $\E_\alpha[fg]=0$ if $r\ne s$. If $r=s$, there is a constant $C_{f,g}$ independent of $\alpha$ such that
\[
 \E_\alpha[fg]
 =C_{f,g}\,\E_\alpha\left[
       \prod_{j=1}^r(x_{2j-1}-x_{2j})^2\right].
\]
Apply this first with $\alpha=\mu_d$. Each squared difference has expectation $2$, and the pairs are independent, hence
\[
 C_{f,g}=2^{-r}\ip{f}{g}_d.
\]
For $H\ge0$ with $Z=\E_{\mu_d}[H(S_d)]>0$, now choose
\[
 \alpha(x)=\frac{H(S_d(x))}{Z}\,\mu_d(x).
\]
This probability measure is exchangeable because both $S_d$ and $\mu_d$ are invariant under coordinate permutations. Substituting it into the theorem and multiplying by $Z$ gives
\[
 \E_{\mu_d}[fgH(S_d)]
 =\ip{f}{g}_d\,\E_{\mu_d}[Q_r^2H(S_d)],
 \qquad
 Q_r=\prod_{j=1}^r\frac{x_{2j-1}-x_{2j}}{\sqrt2}.
\]
On the other hand, for unequal degrees, the same substitution yields zero.

To evaluate the remaining expectation, let $A_r$ be the event that the signs differ in each of the first $r$ pairs. Then
\[
 \Pr_{\mu_d}(A_r)=2^{-r},\qquad Q_r^2=2^r\1_{A_r}.
\]
Conditional on $A_r$, those pairs contribute zero to $S_d$, while the remaining $d-2r$ coordinates are independent uniform signs. Thus
\[
 \E_{\mu_d}[Q_r^2H(S_d)]
 =\E_{\mu_d}[H(S_d)\mid A_r]
 =\E_{\mu_{d-2r}}[H(S_{d-2r})],
\]
which is the equal-degree case of \eqref{eq:harmonic-radial}. The empty product for $r=0$ is one. If $H\ge0$ and $Z=0$, the weighted identities are immediate since $H(S_d)=0$ on the cube. Finally, decomposing a real $H$ into its positive and negative parts extends
the result to signed weights by linearity.
\end{proof}

\section{Probability constants for the sampling lower bounds}\label{app:lower}
We give the complete probability bounds for the Gaussian regression lower bounds derived from \Cref{lem:rare-cov}. Throughout, $\Pp(E)=p\in(0,1/2]$.
We derive sampling lower bounds under either of two concentration assumptions: every function in $V$ has at most a fraction $\rho$ of its energy outside $E$, or, when $t\ge m$, one unit function in $V$ has this property. These are the two assumptions of \Cref{lem:rare-cov}.

\begin{lemma}[Baseline sample complexity]\label[lemma]{lem:baseline}
For $d,k\ge1$, $1\le m\le D_{d,k}$, $0<\delta\le1/4$, and $A\ge16$, there is a universal $c_0>0$ such that
\[
 \Npar(d,k,m,A,\delta)
 \ge c_0\bigl(m+\log(1/\delta)\bigr).
\]
\end{lemma}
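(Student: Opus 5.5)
The baseline bound should follow from \Cref{thm:exact-minimax} applied to a single subspace, with no concentration needed. Since $\mathfrak R_{n,\delta}(d,k,m)\ge\mathcal R^*_{n,\delta}(V)$ for every eligible $V$, it suffices to fix any $m$-dimensional $V\subseteq\cP_{d,k}$ and show $Q_{1-\delta}(Z_{n,V})>A\sigma^2(m+t)/n$ whenever $n<c_0(m+t)$. In that range the parametric guarantee fails at some $n\ge N$ unless $N>c_0(m+t)$. I will take $V$ to be spanned by Walsh characters, say $m$ distinct $\chi_S$ with $|S|\le k$, though any choice works. The key fact is the trace identity $\Tr\hSigma_V=\frac1n\sum_i\|\Phi_V(X_i)\|_2^2$, so $\E\Tr\hSigma_V=m$.

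I will split into two regimes according to whether $m\ge t$ or $t>m$.

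\emph{Case $m\ge t$.} By Jensen or AM--HM, $\sum_j\lambda_j^{-1}\ge m^2/\Tr\hSigma_V$. By Markov, $\Tr\hSigma_V\le 8m$ with probability at least $7/8$, and on that event a large fraction of the eigenvalues are at most $16$. Concretely, at least $m/2$ of them are $\le 16$, by the same counting as in \Cref{lem:rare-cov}. Then \eqref{eq:comparevar} gives
\[
Z_{n,V}\succeq_{\rm st}\frac{\sigma^2}{16n}\chi^2_{\lceil m/2\rceil}.
\]
The chi-square lower tail \eqref{eq:chi-lower} shows this exceeds $\sigma^2m/(64n)$ with probability bounded below by a constant. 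That constant exceeds $\delta\le1/4$ once $m$ is at least a moderate constant. For small $m$, I will instead use a single direction, as in the next case. Hence $Q_{1-\delta}(Z_{n,V})\ge \sigma^2 m/(64n)$. This exceeds $A\sigma^2(m+t)/n\le 2A\sigma^2m/n$ only if $1/64>2A$, which is false.

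\emph{Fix of the case $m\ge t$.} The scaling argument alone does not work: $\sigma^2/n$ appears on both sides, so a constant-factor comparison cannot succeed for large $A$. The true mechanism is that for $n<m$ the matrix $\hSigma_V$ is singular, since its rank is at most $n$. By \Cref{thm:exact-minimax} the risk is then $+\infty$, so $N\ge m$. For the $t$ part, use $n<ct$ together with singularity: with probability exceeding $\delta$, some unit $h\in V$ vanishes at all samples. Taking $h=\chi_S$ does not vanish, so instead choose $V$ containing $h=(1+x_1)/2\cdot\sqrt2$, which has degree $1\le k$ and unit norm after scaling. This $h$ vanishes on half the cube. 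Then
\[
\Pp\{h(X_i)=0\ \forall i\}=2^{-n}>\delta=e^{-t}\quad\text{when } n<t/\log2 .
\]
On this event $\hSigma_V$ is singular, so $\mathcal R^*=+\infty$. If $m=D_{d,k}$, then $V=\cP_{d,k}$ contains this $h$ automatically. Otherwise, extend $h$ to any $m$-dimensional subspace.

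Combining the two cases gives $N\ge\max\{m,\,t/\log2\}\ge (m+t)/2$, so $c_0=1/2$ works. The main subtlety is the second case, where one must exhibit a low-degree function with a large zero set and verify that it fits in every admissible dimension $m$. The degree-one function $(1+x_1)/\sqrt2$ handles this for every $k\ge1$.
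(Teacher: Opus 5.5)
Your final argument is correct and is essentially the paper's proof. First, a design with $n<m$ has rank at most $n$, so it is singular and $N\ge m$. Second, including the unit degree-one function $(1+x_1)/\sqrt2$ in $V$ makes the design singular with probability $2^{-n}>e^{-t}$ whenever $n<t/\log 2$, so the risk is infinite by \Cref{thm:exact-minimax}.

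You should delete the abandoned Markov/chi-square attempt. Your threshold $n<t/\log 2$ always contains $n=1$ because $t\ge\log 4$, which gives the explicit constant $c_0=1/2$ and avoids the paper's empty-range caveat.
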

\begin{proof}
If $n<m$, every feature matrix is singular, so the regression guarantee fails. This gives the lower bound $m$.

For the confidence term, include $h(x)=(1+x_1)/\sqrt2$ in the model and extend it to an orthonormal basis of any feasible size $m$. If $t=\log(1/\delta)$, then
\[
 \begin{aligned}
 n\le\frac{t}{2\log2}
 &\quad\Longrightarrow\quad
 \Pp(X_{i1}=-1\text{ for every }i)
 =2^{-n}\ge e^{-t/2}>\delta.
 \end{aligned}
\]
On this event the feature column for $h$ is zero, so once again the regression guarantee fails. Taking an integer sample size below the displayed bound yields a universal constant times $t$. If that range contains no positive integer, use $N\ge1$ . Finally, use $m\vee t\ge(m+t)/2$.
\end{proof}

\begin{proposition}[Estimator-independent parametric obstruction]\label[proposition]{prop:rare-par}
Fix $A\ge16$. There is $\rho_A>0$ such that, under $\E[\Phi(X)\Phi(X)^\top\1_{\{X\notin E\}}]\preceq\rho I_m$, or under the directional assumption in \Cref{lem:rare-cov}(ii) when $t\ge m$, with $\rho\le\rho_A$, for every $t\ge\log4$ and every positive integer
\(
 n\le(m+t)/(1024p),
\)
the Gaussian model satisfies
\[
 \mathcal R^*_{n,e^{-t}}(V)
 >A\sigma^2\frac{m+t}{n}.
\]
\end{proposition}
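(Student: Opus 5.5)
The plan is to use \Cref{thm:exact-minimax}: the minimax risk equals $Q_{1-\delta}(Z_{n,V})$ with $\delta=e^{-t}$. It therefore suffices to show
\[
\Pp\Bigl(Z_{n,V}>A\sigma^2\tfrac{m+t}{n}\Bigr)>e^{-t},
\]
where singular designs count as $Z_{n,V}=+\infty$. Conditionally on a nonsingular design whose $r$ smallest eigenvalues are at most $\lambda$, \eqref{eq:comparevar} gives $Z_{n,V}\succeq_{\rm st}\frac{\sigma^2}{n\lambda}\chi_r^2$. The argument then splits into two regimes according to which concentration hypothesis is available.

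\emph{Regime $t<m$ (full-subspace concentration).} Here $m\ge2$ (since $t\ge\log4>1$), and $n\le(m+t)/(1024p)\le 2m/(1024p)\le m/(128p)$. Hence \Cref{lem:rare-cov}(i) applies. With probability at least $15/16$, at least $r=\lfloor m/2\rfloor\ge m/3$ eigenvalues are at most $128\rho$.

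On this event, $Z_{n,V}\ge \frac{\sigma^2}{128 n\rho}\chi_r^2$ (trivially so if the design is singular). Using \eqref{eq:chi-lower} with $a=1/2$, $\chi_r^2\ge r/2\ge m/6$ with probability at least $1-(e^{1/2}/2)^{r/2}$. This is bounded below by a universal constant for $r\ge1$, for instance $1-(0.83)^{1/2}>0.08$.

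Combining, $Z_{n,V}\ge\frac{\sigma^2 m}{768 n\rho}$ with probability at least $\tfrac{15}{16}\cdot 0.08>1/4\ge e^{-t}$. It is strictly larger than $e^{-t}$ once we note $t\ge\log4$; the constants may need slight massaging, e.g.\ using $a$ smaller.

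Since $t<m$, we have $m+t<2m$. So choosing $\rho_A\le 1/(1536A)$ gives $\frac{\sigma^2 m}{768 n\rho}\ge\frac{2A\sigma^2m}{n}>A\sigma^2\frac{m+t}{n}$.

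\emph{Regime $t\ge m$ (directional assumption, or the full assumption, which implies it for any unit $v$).} Now $n\le 2t/(1024p)=t/(512p)$, so \Cref{lem:rare-cov}(ii) gives $\lambda_1\le32\rho$ with probability at least $(15/16)e^{-t/256}$. On that event $Z_{n,V}\ge\frac{\sigma^2}{32n\rho}G_1^2$ with $G_1$ independent of the design.

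Thus $Z_{n,V}\ge\frac{\sigma^2 t}{2048\,n\rho}$ with probability at least $\tfrac{15}{16}e^{-t/256}\Pp(G^2\ge t/64)$, which exceeds $e^{-t}$ by \eqref{eq:strict-tail}. Since $m+t\le2t$, taking $\rho_A\le1/(4096A)$ (strictly smaller, for strictness) yields $Z_{n,V}>A\sigma^2(m+t)/n$ on this event.

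Finally, $\rho_A$ is set to the minimum of the two choices. The only delicate point is the first regime: the bound must exceed $e^{-t}$ uniformly down to $t=\log4$, so the constant-probability event must have probability strictly above $1/4$. I expect this constant bookkeeping to be the main obstacle. If $15/16$ times the chi-square lower-tail probability falls short for small $r$, I would lower the threshold $a$ in \eqref{eq:chi-lower} (e.g.\ $a=1/8$), making $(ae^{1-a})^{1/2}$ small enough. The cost is a worse constant, which is absorbed into $\rho_A$.
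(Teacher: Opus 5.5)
This is essentially the paper's proof. You reduce via \Cref{thm:exact-minimax} to showing $\Pp\bigl(Z_{n,V}>A\sigma^2(m+t)/n\bigr)>e^{-t}$, then use \Cref{lem:rare-cov}(i) with the chi-square lower tail when $t<m$, and \Cref{lem:rare-cov}(ii) with \eqref{eq:strict-tail} when $t\ge m$.

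There is one slip in the first regime. With $a=1/2$, \eqref{eq:chi-lower} only yields $\tfrac{15}{16}\cdot 0.08=0.075<1/4$, so the stated inequality is false and your fallback of a smaller $a$ is genuinely needed. Taking $a=1/8$ gives
\[
 \tfrac{15}{16}\bigl(1-\sqrt{e^{7/8}/8}\bigr)\approx0.42>1/4.
\]
It also gives $Z_{n,V}\ge\sigma^2m/(3072n\rho)$, so $\rho_A\le1/(6144A)$ handles both regimes. The paper instead takes $b_0=1/(1024e)$, which makes the chi-square failure probability at most $1/32$.
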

\begin{proof}
Take $b_0=1/(1024e)$ and choose
\(
 0<\rho_A<
 \min\left\{\frac18,\frac{b_0}{3072A},
                         \frac1{16384A}\right\}.
\)
Use the random variable $Z_{n,V}$ in \Cref{thm:exact-minimax}. A singular design makes it infinite, so it is enough to focus on nonsingular designs.

Suppose first that $t<m$. Then $m\ge2$. With probability at least $15/16$, at least $r=\lfloor m/2\rfloor\ge m/3$ eigenvalues of $\hSigma$ are at most $128\rho$. Conditional on any such nonsingular design,
\[
 Z_{n,V}\ \succeq_{\rm st}\
 \frac{\sigma^2}{128\rho n}\chi_r^2.
\]
By \eqref{eq:chi-lower},
\(
 \Pp(\chi_r^2<b_0r)\le(e b_0)^{r/2}\le1/32.
\)
Thus, with unconditional probability at least $(15/16)(31/32)>1/4$,
\[
 Z_{n,V}
 \ge\frac{\sigma^2b_0r}{128\rho n}
 \ge\frac{\sigma^2b_0m}{384\rho n}
 >8A\sigma^2\frac mn
 >A\sigma^2\frac{m+t}{n}.
\]

If $t\ge m$, \Cref{lem:rare-cov}(ii) gives an event $D$ of probability at least $(15/16)e^{-t/256}$ on which $\lambda_1(\hSigma)\le32\rho$. Conditional on such a design, one coordinate of the independent standard Gaussian vector yields
\[
 Z_{n,V}\ \succeq_{\rm st}\
 \frac{\sigma^2}{32\rho n}G^2.
\]
By \eqref{eq:strict-tail}, the event
\[
 Z_{n,V}\ge\frac{\sigma^2t}{2048\rho n}
\]
has probability strictly greater than $e^{-t}$. Its displayed threshold is greater than $8A\sigma^2t/n$, hence greater than the benchmark rate because $m\le t$. This proves that the lower $(1-e^{-t})$-quantile of $Z_{n,V}$ strictly exceeds the benchmark rate. Lastly, apply \Cref{thm:exact-minimax}.
\end{proof}

\section{Comparison to the Polyanskiy--Samorodnitsky uncertainty boundary}\label{app:baselines}
We restate the Polyanskiy--Samorodnitsky uncertainty principle in our notation and explain how it yields the leading exponent of our one-dimensional clipping threshold when $k/d$ tends to a fixed value in $(0,1/2)$.

Let $B_{d,r}$ denote a Hamming ball of radius $\lfloor rd\rfloor$ and put $r_*(q)=\tfrac12-\sqrt{q(1-q)}$.

Let $\Pi_{\le k}$ be orthogonal projection onto $\cP_{d,k}$ and $C_E=\Pi_{\le k}\1_E\Pi_{\le k}$ its concentration operator, where $\1_E$ acts by multiplication. Its eigenvalues are denoted $\kappa_1(C_E)\ge\kappa_2(C_E)\ge\cdots$.

Then, Theorem~9 of \cite{PS19} implies the following statements for fixed $q\in(0,1/2)$ and fixed $r\in(0,1/2)$, with $k=\lfloor qd\rfloor$: for some $c=c(q,r)>0$ and all sufficiently large $d$,
\[
 \begin{aligned}
 r<r_*(q),\quad |E|\le e^{d\HH(r)}
 &\quad\Longrightarrow\quad \kappa_1(C_E)\le e^{-cd},\\
 r>r_*(q),\quad E=B_{d,r}
 &\quad\Longrightarrow\quad \kappa_1(C_E)\ge1-e^{-cd}.
 \end{aligned}
\]
The cosine of their angle between the spatial and Fourier subspaces is the square root of $\kappa_1(C_E)$; squaring only changes the constants in these bounds. The boundary equation is $(1-2r)^2+(1-2q)^2=1$, and $\mu_d(B_{d,r})=\exp\{-d(\log2-\HH(r))+o(d)\}$. Thus the critical probability exponent is precisely $\Lambda_{d,k}$.

For each fixed $\eta\in(0,1)$, these statements imply
\[
 \log T_{d,\lfloor qd\rfloor}(\eta)=d\Psi(q)+o(d).
\]

For the upper bound, choose $B=\exp\{d(\Psi(q)+\varepsilon)\}$ with fixed $\varepsilon>0$. For every unit polynomial $h$ of degree at most $\lfloor qd\rfloor$, the set $E=\{h^2>B\}$ has probability at most $1/B$. The first implication above therefore gives $\E[h^2\1_E]=o(1)$, uniformly in $h$, and hence $\E[h^2\wedge B]\ge1-o(1)\ge\eta$ for sufficiently large $d$.

For the lower bound, the second implication provides a unit polynomial $h$ with $\E[h^2\1_{E^c}]=o(1)$ on a Hamming ball $E$ whose probability exponent is arbitrarily close to $\Psi(q)$. Choosing this ball so that $\mu_d(E)\le\exp\{-d(\Psi(q)-\varepsilon/2)\}$ and setting $B=\exp\{d(\Psi(q)-\varepsilon)\}$ gives
\[
 \E[h^2\wedge B]
 \le \E[h^2\1_{E^c}]+B\mu_d(E)
 =o(1)<\eta.
\]
Thus $\log T_{d,\lfloor qd\rfloor}(\eta)/d\to\Psi(q)$.

\section{Further bounds and model examples}\label{app:consequences}
We derive the additional bounds and justify the model examples discussed in \Cref{sec:consequences}.

\subsection{Ambient dimension and model dimension}\label{app:modeldimscaling}

\paragraph{Worst case over the ambient dimension.}
Let $\Npar^\infty$ be the least threshold valid on every cube admitting a model of dimension $m$. We show that
\[
 \log\frac{\Npar^\infty}{m+t}=2k+O(k^{1/3}),
\]
with constants depending on the fixed target parameters.

For the upper bound, \Cref{prop:classical-upper,prop:clip-cov,cor:ols} give
\[
 \Npar^\infty\le C(m+t)\exp\{2k+Ck^{1/3}\},
\]
uniformly over $d$, including $k>d$. For the lower bound, it suffices to exhibit one suitable ambient dimension. Choose
\[
 d=\left\lceil\max\{m+1,10k,k^{5/3}\}\right\rceil.
\]
Then $m<d\le\Mhard$ and $k/d\le1/10$, so the matching lower bound applies. Moreover,
\[
 \Lambda_{d,k}\ge2k-Ck^2/d\ge2k-Ck^{1/3}.
\]
The lower bound of \Cref{thm:main} therefore proves the claim.
The same choice of $d$, together with the classical upper bound and \Cref{prop:clipping}, also gives
\[
 \log\sup_d T_{d,k}(\eta)=2k+O_\eta(k^{1/3}).
\]

\paragraph{The finite-dimensional correction.}

By \Cref{lem:curve},
\[
 \Psi(q)=2q-\frac23q^2-\frac8{15}q^3+O(q^4).
\]
Substituting this expansion into \Cref{thm:main} gives, throughout the range $m\le M_{d,k}\vee t$ and as $k/d\to0$,
\[
 \log\frac{\Npar}{m+t}
 =2k-\frac{2k^2}{3d}
   +O\!\left(\frac{k^3}{d^2}+k^{1/3}\right).
\]
Thus the first correction to the unrestricted-dimension exponent $2k$ is $-2k^2/(3d)$. Relative to its magnitude, the two error terms have orders $k/d$ and $d/k^{5/3}$, respectively.
Both vanish when $k\ll d\ll k^{5/3}$. Alternatively, if $k/d\to q\in(0,1/2)$, then
\[
 \frac1d\log\frac{\Npar}{m+t}\longrightarrow\Psi(q)
\]
throughout the matching range.
When $k\ll d\ll k^{5/3}$ and $m\le\Mhard\vee t$, the threshold is smaller than its unrestricted-dimension counterpart by a factor
\[
 \frac{\Npar^\infty}
      {\Npar}
 =\exp\!\left\{\left(\frac23+o(1)\right)\frac{k^2}{d}\right\}.
\]

\paragraph{A lower bound for larger model dimensions.}
Fix $q_0\in(0,1/2)$ and $A\ge16$, and assume $1\le k\le q_0d$ and $t=\log(1/\delta)\ge\log4$.
By allocating more degree to the harmonic directions, we can extend the lower bound beyond $m\le\Mhard$, at the cost of a larger error. Specifically, for any integer $0\le R\le k$ with $m\le\binom dR$,
\[
 \log\frac{\Npar}{m+t}
 \ge \Lambda_{d,k}-C_{q_0,A}(R+k^{1/3}).
\]

To prove this, \Cref{prop:harmonic-lift} supplies a common concentration set of probability at most $e^{-a_R}$, where
\[
 a_R=\mathcal E_{d,k}(R)-C(k-R)^{1/3}.
\]
If $a_R\ge\log2$, this probability is at most $1/2$, so
\Cref{prop:rare-par} gives
\[
 \log\frac{\Npar}{m+t}\ge a_R-O(1)
 \ge \Lambda_{d,k}-C'(R+k^{1/3}),
\]
where the last inequality uses \eqref{eq:harmonic-cost}. If $a_R<\log2$, the same entropy comparison gives $\Lambda_{d,k}\le C'(R+k^{1/3})+O(1)$. In this case, the baseline bound $\Npar\ge c_0(m+t)$ from \Cref{lem:baseline} proves the claim after increasing the constant.

\paragraph{The leading exponent for subexponential model dimensions.}
Suppose $k/d\to q\in(0,1/2)$ and $\log m=o(d)$. Then, even outside the cube-root matching range,
\[
 \log\frac{\Npar}{m+t}=\Lambda_{d,k}+o(d),
\]
where $\Lambda_{d,k}=\Theta(d)=\Theta(k)$.

Indeed, choose $R=\lceil\log_2m\rceil$, with $R=0$ when $m=1$. We have $R=o(d)$, so eventually $R\le k/2$ and $d\ge2R$. For $R>0$,
\[
 \binom dR\ge(d/R)^R\ge2^R\ge m.
\]
The preceding lower bound therefore applies, with error $O(R+k^{1/3})=o(d)$. The upper bound of \Cref{thm:main} also has error $o(d)$ and holds for every feasible $m$. Together these establish the leading exponent, uniformly in $t\ge\log4$.

\subsection{Known Walsh coordinates and finite-space caps}\label{app:cap}
For an isotropic feature vector with $\|\Phi(x)\|_2^2\le L$, matrix Chernoff~\cite[Corollary 5.2]{Tropp12} gives, for $0<c<1$,
\begin{equation}\label{eq:finite-cap}
 \Pp(\lambda_{\min}(\hSigma)\le c)
 \le m\exp\!\left(-\frac{(1-c)^2n}{2L}\right).
\end{equation}
Indeed, its matrix summands are positive semidefinite, have norm at most $L$, and their expectations sum to $nI_m$. The standard Chernoff factor is $[e^{-u}/(1-u)^{1-u}]^{n/L}$ with $u=1-c$. The inequality $-u-(1-u)\log(1-u)\le-u^2/2$ follows by differentiation.

For a known span of $m$ Walsh characters, $\|\Phi(x)\|_2^2=m$ exactly. Hence $n\ge C_cm(\log m+t)$ gives stable sampling. Choosing $c=1/2$, failure probability $\delta/2$, and using \Cref{cor:ols} gives parametric prediction. Character values need not be mutually independent; only independent sample rows are used.

Let $\chi_{\le k}(x)=(\chi_S(x))_{|S|\le\min(k,d)} \in\R^{D_{d,k}}$ be the column vector of Walsh characters. Expanding each basis function of $V$ in this basis gives
\[
 \Phi_V(x)=U\chi_{\le k}(x),
\]
where $U\in\R^{m\times D_{d,k}}$ contains the expansion coefficients.

Since $U$ has orthonormal rows, $\|\Phi_V(x)\|_2^2\le\|\chi_{\le k}(x)\|_2^2=D_{d,k}$.
Thus \eqref{eq:finite-cap} gives
\[
 N\le C D_{d,k}(\log m+t)
\]
for either the stable-sampling or parametric threshold, with $C$ depending on the fixed target parameters.

Likewise, every unit polynomial $h\in\cP_{d,k}$ satisfies $h(x)^2\le D_{d,k}$. The inequality $z\wedge(\eta D_{d,k})\ge\eta z$ for $0\le z\le D_{d,k}$ therefore gives
\[
 \E[h^2\wedge(\eta D_{d,k})]\ge\eta,
 \qquad T_{d,k}(\eta)\le\eta D_{d,k}.
\]
When $k\ge d$, equality follows by taking $h=2^{d/2}\1_{\{x=x_0\}}$ for any cube vertex $x_0$. These bounds reflect the finite size of the cube: once $k\ge d$, the polynomial space no longer grows, so the threshold cannot continue to scale as $e^{2k}$.

\end{document}